\documentclass[preprint,12pt]{elsarticle}

\usepackage[margin=2.0cm]{geometry}

\usepackage{graphicx}
\usepackage{subcaption}
\usepackage{amssymb}
\usepackage{amsmath}
\usepackage{siunitx}
\usepackage[dvipsnames]{xcolor}
\usepackage{commath}
\usepackage{amsfonts}%
\usepackage{graphicx,verbatim}
\usepackage{tikz}
\usetikzlibrary{cd}

\usepackage{float}

\usepackage[pagebackref=false,colorlinks=true, pdfstartview=FitV, linkcolor=violet,citecolor=red, urlcolor=blue]{hyperref}

\usepackage{amsthm}
\newtheorem{lthm}{Theorem}

\newtheorem{ldef}{Definition}

\usepackage{dirtytalk}

\usepackage{hyperref}

\newcommand{\cC}{\mathcal{C}}
\newcommand{\cT}{\mathcal{T}}

\usepackage[font=small,labelfont=bf]{caption}

\newtheorem{theorem}{Theorem}
\newtheorem*{theorem*}{Theorem}
\newtheorem{definition}{Definition}
\newtheorem{proposition}{Proposition}
\newtheorem{corollary}{Corollary}
\newtheorem{lemma}{Lemma}
\newtheorem{example}{Example}

\newtheorem*{conjecture*}{Conjecture}

\newtheorem{question}{Question}
\newtheorem{remark}{Remark}
\usepackage{lineno}

\journal{ArXiv}

\begin{document}

\begin{frontmatter}


\title{Knots and the Sierpinski tetrahedron}



\author{Malors Espinosa}
\ead{espino41@mail.tsinghua.edu.cn, Yau Mathematical Science Center, Tsinghua University}


\begin{abstract}

In this paper we prove that there are infinitely many knots that cannot be embedded in the $1$-skeletons of the \textit{finite} iterations of the Sierpinski tetrahedron fractal. We do this by proving that such an embedding induces a sphere decomposition of weight at most $6$. There are infinitely many knots with spherewidth greater than this.

Mathematics Subject Classification: 57K10, 28A80\\
\end{abstract}
\end{frontmatter}


\section{Introduction}\label{sec: introduction}

\subsection{Motivation and main results}

The Sierpinski tetrahedron is a fractal constructed out of an iterative process. Concretely, a regular solid tetrahedron gets substituted by its homothetic copies, with proportion of 1/2, at each one of its vertices. At each step, this substitution is carried out on each remaining solid tetrahedrons. The intersection of this sequence of nested closed sets is the fractal.

\begin{figure}[H]
    \centering
    \includegraphics[width=0.4\linewidth]{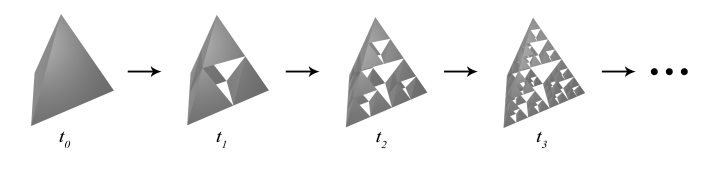}
    \caption{The different iteration stages of the construction}
   \label{fig:SierpinskiTetrahedronProgression}
\end{figure}

The 1-skeletons of the finite stages is a particular sequence of graphs, all of which are subsets of the Sierpinski tetrahedron. We see from this that this fractal has embedded many polygonal curves. 

In \cite{knotsinsidefractals} we looked for some of these polygonal curves that were knotted. In other words, we wondered whether all (tame) knots can be found as subsets of the Sierpinski tetrahedron. Our strategy was to find the knots as subpaths of some big enough 1-skeleton. This then implies it is also in the fractal itself.

We were able to prove that all Pretzel knots can be found in the 1-skeletons. We conjectured that, in fact, all knots should be there in the final fractal. The purpose of this paper is to prove the following result.

\begin{lthm}
\label{lthm: not all knots}
Let $T(p, q)$ be a torus knot with $\min\{p, q\} > 9$, then $T(p, q)$ cannot be embedded in the $1$-skeleton of a finite iteration of the Sierpinski tetrahedron. In particular, there are infinitely many knots that cannot be embedded in the $1$-skeletons of the finite iterations of the Sierpinski tetrahedron.
\end{lthm}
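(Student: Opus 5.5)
The plan is to show that any knot $K$ embedded as a cycle in the $1$-skeleton $\Gamma_n$ of the $n$-th stage admits a \emph{sphere decomposition} of weight at most $6$. We then compare this with a known lower bound for the spherewidth of torus knots, which grows with $\min\{p,q\}$. For the lower bound we would invoke the result (due to Taylor--Tomova and collaborators) that spherewidth is at least a constant multiple of the bridge number in this regime. Concretely, we would use that $T(p,q)$ has spherewidth $\geq 2\min\{p,q\}$, or at least something exceeding $6$ once $\min\{p,q\}>9$. The threshold $9$ in the statement suggests the precise inequality we need is of the form $\mathrm{sw}(T(p,q)) > 6$ whenever $\min\{p,q\}>9$, so we would quote it in exactly that form.

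For the upper bound we would build the sphere decomposition from the self-similar structure. $\Gamma_n$ is the union of four copies of $\Gamma_{n-1}$, scaled by $1/2$ and placed at the vertices of the big tetrahedron $\Delta$, and any two copies meet in a single point, a midpoint of an edge of $\Delta$. We would place spheres (boundaries of slightly thickened subtetrahedra) around each subtetrahedron at every level $k\le n$, and then order these nested spheres into a sweepout of $S^3$. The key observation is that the boundary of a small neighborhood of a level-$k$ subtetrahedron meets $\Gamma_n$ only near its $4$ corner vertices. Moreover, it can be isotoped to meet $K$ transversely in at most $4$ points per corner-crossing. Since $K$ is a cycle in the graph and each corner vertex has degree $6$ in $\Gamma_n$ (degree $3$ from each of the two adjacent subtetrahedra), the count needs care. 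A sphere slightly enlarged past a shared vertex sees the edges on the other side.

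The main obstacle is showing that the weight of each level sphere is at most $6$. Our first attempt is to perturb each sphere near a corner vertex $v$ so that it separates the $3$ edges at $v$ belonging to the inside copy from the $3$ belonging to the outside copy. Then $K$, which uses $0$ or $2$ edges at $v$, crosses that sphere at $v$ at most twice, and only when one edge lies inside and one outside. An inside copy touches outside edges only at its corner vertices, but one corner, the apex of the big $\Delta$ for the top copy, is not shared. So at most $3$ corners contribute, giving $\le 3\cdot 2=6$ intersections. Finally, we need to check that the nested family of these spheres, interpolated by spheres consisting of unions of sibling sublevel spheres tubed together, forms a valid sphere decomposition in the paper's sense. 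Any interpolating thick/thin spheres must also be checked to inherit the bound $6$. With that in hand, $\mathrm{sw}(K)\le 6$, which contradicts the lower bound for $T(p,q)$. The infinitude statement then follows since there are infinitely many such torus knots.
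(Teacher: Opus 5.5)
Your overall architecture is the paper's: nested spheres around the subtetrahedra of every level, merged along a tree, plus a cited lower bound for torus knots. But the weight bound, which is the entire content of the upper half, is not established, and the count you give is wrong in both factors.

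\emph{Corners.} ``At most $3$ corners contribute'' holds only for level-$1$ copies. Already at level $2$, three of the four copies inside each level-$1$ copy have all four corners shared: three with siblings, and one (a midpoint of an edge of $\Delta$) with a copy inside a neighbouring level-$1$ copy.

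\emph{Crossings per corner.} A shared corner $v$ contributes at most \emph{one} crossing, not two; your own sentence already says crossings occur only when one edge is inside and one outside, which is a single crossing. $K$ passes through $v$ at most once. If its two edges at $v$ lie in different copies, the separating sphere meets $K$ once there. If they lie in the same copy, you can put the thickened vertex on that side and get no crossing. (The paper's rescaling by $\varepsilon<1/2$, which turns each shared vertex into a short connecting edge so every copy sits in its own disjoint cube, is the clean version of your perturbation.) So a single-copy sphere has weight at most $4$ (Proposition \ref{prop: bound on cubes}), and the value $6$ never occurs there.

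The $6$ arises precisely in the step you defer: the spheres enclosing two or three siblings. In the paper the siblings $A,B,C,D$ are merged successively, each merge being a double bubble (an enclosing prism plus a membrane) at a trivalent vertex of the tree. Write $\delta_V$ for the use of $V$'s unique exit out of the parent and $\delta_{VW}$ for the use of the sibling connection. The merged sphere has weight $w(A)+w(B)-2\delta_{AB}=\delta_A+\delta_B+\delta_{AC}+\delta_{AD}+\delta_{BC}+\delta_{BD}\le 6$. After the next merge it is $\delta_A+\delta_B+\delta_C+\delta_{AD}+\delta_{BD}+\delta_{CD}\le 6$. With your per-copy bound of $6$, this step would only give $6+6-2=10$, so your bookkeeping cannot close.

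Three further pieces are missing:
\begin{itemize}
\item A linearly ordered sweepout is not a sphere decomposition; you need the tree with double bubbles at its trivalent vertices.
\item You must explain how the spheres around the smallest tetrahedra shrink to points without the thickened vertices adding artificial intersections. This is the paper's vertex-cube construction, Lemma \ref{lem: vertex cubes map}, with weight $\le 3$.
\item The lower bound cannot be reverse-engineered from the threshold. The input needed is \cite[Corollary 1.3]{LunelSphereWidth}, $sw(T(p,q))\ge\tfrac23\min\{p,q\}$, which gives $sw\ge 20/3>6$ once $\min\{p,q\}\ge 10$. Your ``$\ge 2\min\{p,q\}$'' is not that bound. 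Also, spherewidth is not bounded below by a multiple of the bridge number in general: connected sums of many trefoils have unbounded bridge number but bounded spherewidth.
\end{itemize}
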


This theorem does not answer completely whether all knots are in the Sierpinski tetrahedron, but it certainly casts doubts on the veracity of the claim. When we looked for Pretzel knots, for the results of \cite{knotsinsidefractals}, we did it by a combinatorial search. There didn't seem to be any major reason why a more patient and methodical search would not be able to lead to any given knot. Now we know that, if some version of the conjecture is true, there will need to be tame knots that indeed use the fractal structure of the Sierpinski tetrahedron to avoid the restrictions the finite iterations pose. 

However, in the quest of understanding tame knots that we find with wild presentations, the fractal is very misleading and at the moment I don't know if the results of this work generalize in some way or if there are genuinely new knots to be found in the Sierpinski tetrahedron but not a moment before. 

The main constructions that serves our proof are \textit{sphere decompositions} and the \textit{spherewidth} of a knot. Following \cite{LunelSphereWidth}, we have the following two definitions.
\begin{ldef}
 A \textbf{thickened embedding} of a graph $G$ is an embedding of $G$ in $\mathbb{S}^3$ where each vertex is thickened to a small ball. Two balls are connected by a polygonal edge if an only if they are adjacent in the graph $G$, and pairs of edges are disjoint.   
\end{ldef}

\begin{ldef}
    \label{ldef: sphere decomposition}
Let $G$ be a graph thickly embedded in $\mathbb{S}^3$. A \textbf{sphere decomposition} of $G$ is a continuous map $f:\mathbb{S}^3\longrightarrow \cT$ where $\cT$ is a trivalent tree with at least one edge. It satisfies
\begin{description}
    \item[(SD 1)] For all $x\in L(\cT)$, $f^{-1}(x)$ is a point disjoint from $G$.

    \item[(SD 2)] For all $x\in V(\cT)-L(\cT)$ , $f^{-1}(x)$ is a double bubble transverse to $G$.

    \item[(SD 3)] For all $x\in \cT$ interior to an edge, $f^{-1}(x)$ is a sphere transverse or finitely tangent to $G$.
\end{description}
\end{ldef}
You can see examples of double bubbles in figures \ref{fig: generic double bubble} and \ref{fig:doublebubbles}. We then have
\begin{ldef}
    \label{ldef: width}
Let $G$ be a graph and $f:\mathbb{S}^3\longrightarrow \cT$ be a sphere decomposition of $G$. We define \textbf{the width of $f$} as the supremum of the weights of the spheres $f^{-1}(x)$ for all $x$ interior to the edges. That is,
\begin{equation*}
    w(f) := \displaystyle\sup_{x\in G - V(G)} w(f^{-1}(x)).
\end{equation*}
Furthermore, the \textbf{spherewidth} of $G$ is the infimum of the widths of its sphere decompositions. That is,
\begin{equation*}
    sw(G) := \displaystyle\inf_{f:\mathbb{S}^3\longrightarrow \cT} w(f).
\end{equation*}
\end{ldef}
In the above definition, the weight of a sphere $S$ with respect to $G$ is the number of connected components of $G\cap S$. 

In vague terms, a sphere decomposition is a \say{movie} that shows the growth of several spheres. They start as points, they grow and merge, and eventually they become a single sphere that grows towards infinity. This movie is not parametrized by time, but by the trivalent tree $\cT$. If you were to stop at any point of the interior of the edges, you would see a snapshot of how a sphere is growing. In the process of growth, these spheres hold in their interior part of the given knot until they hold it entirely. 

The knot is a single component, however, as the spheres grow they might collect different parts of the knot that are disconnected and will only merge at a later point. The spherewidth $sw(K)$ of a knot $K$, then, is an unavoidable quality of the sphere growth: no matter how you run this process, somewhere along the way, at least one of the spheres intersected the knot at least a certain fixed amount of times.

Consequently, if one is able to produce a sphere decomposition where at every point, interior to the edges of $\cT$, the number of components, in the intersections with the knot, is bounded above by an absolute constant, then the spherewidth is bounded above by this same constant.

We prove the following result.

\begin{lthm}
    \label{lthm: sphere decomposition}
Let $K$ be a knot that can be embedded in a finite iteration of the Sierpinski tetrahedron. Then $K$ admits a sphere decomposition of weight at most $6$.
\end{lthm}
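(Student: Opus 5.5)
The plan is to use the self-similar structure of the Sierpinski tetrahedron to build the tree $\cT$ as the dual tree of the nested tetrahedra. Call the $n$-th iteration $\Delta_n$. It is a union of $4^n$ small solid tetrahedra, and these are organized by a rooted $4$-ary tree: each tetrahedron of level $k$ contains four tetrahedra of level $k+1$. Any two sibling tetrahedra meet in exactly one point, a vertex. The knot $K$ lies in the $1$-skeleton of $\Delta_n$, so it is a union of edges of the smallest tetrahedra.

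For each tetrahedron $\tau$ in the hierarchy, I take a sphere $S_\tau$ that bounds a slight thickening of $\tau$, chosen so that it meets the $1$-skeleton only near the four corner vertices of $\tau$. Since $K$ passes through a corner vertex at most once (it is embedded), $S_\tau$ can be pushed so that it meets $K$ in at most two points near each corner used by $K$. That gives a crude bound of $8$. To reach $6$, I would refine the choice of sphere: near a corner $v$ of $\tau$, $K$ either stays inside $\tau$, so the sphere can be pushed to include or exclude $v$ with no intersection, or $K$ crosses from $\tau$ into a neighbouring tetrahedron through $v$. In the crossing case $K$ uses one edge in $\tau$ and one outside, so a sphere slightly enlarged at $v$ meets $K$ exactly once there. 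Each corner therefore contributes at most one intersection point, and the weight is at most $4$ at a single tetrahedron.

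Next I build the movie. The leaves of $\cT$ are points inside the smallest tetrahedra; each point grows into $S_\tau$ for its leaf tetrahedron $\tau$. At a level-$k$ tetrahedron $\sigma$ with children $\tau_1,\dots,\tau_4$, the $4$-ary node has to be resolved into trivalent vertices. I would first merge $S_{\tau_1}$ and $S_{\tau_2}$ into a sphere around $\tau_1\cup\tau_2$ through a double bubble, then add $\tau_3$, then $\tau_4$, ending at a sphere isotopic to $S_\sigma$. Transversality at the double bubbles is arranged by perturbing at the shared vertex, and the top sphere finally grows to infinity, which matches the single-root structure of $\cT$.

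The key step, and the main obstacle, is bounding the weight of the intermediate spheres around unions of two or three children. Such a union has at most $6$ boundary corner points: two adjacent subtetrahedra have $4+4-1=7$ corners, one of which is shared and therefore interior. Three subtetrahedra have $4\cdot 3-3=9$ corners, with three shared ones interior, leaving $6$. Using the per-corner bound of one intersection from above, the weight is at most $6$. I would also check the finitely tangent moments during the isotopies, where the weight passes through tangencies but does not exceed the count at the generic spheres on either side. The delicate point is making sure the per-corner bound of one really holds at every exterior corner, including corners of the union that are also corners of $\sigma$ or shared with cousins. I would handle this by a case check on the local picture at a vertex of $\Delta_n$: at most two tetrahedra meet there, and the knot uses at most two of the incident edges.
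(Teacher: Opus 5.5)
Your count is correct, and it is the paper's count. The six exterior corners of $\tau_1\cup\tau_2$ are the two corners of $\sigma$ and the four contact points with $\tau_3,\tau_4$. Those of $\tau_1\cup\tau_2\cup\tau_3$ are three corners of $\sigma$ and the three contact points with $\tau_4$. These are exactly the six $\delta$-terms in Proposition~\ref{prop: wieght bound 6}, and your ``at most one crossing per corner'' plays the role of Proposition~\ref{prop: bound on cubes}.

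The gap is that this only bounds the weight of finitely many snapshot spheres. The width is a supremum over \emph{every} level set, and nothing in the proposal controls the spheres between snapshots. Your tangency remark compares a tangent level set with its immediate neighbours; it does not bound the generic spheres in between. In general a sweep joining two spheres of weight at most $6$ can pass through spheres of much larger weight, because each local extremum of $K$ with respect to the sweep creates two new intersection points. What you need is that each region swept between consecutive snapshots meets $K$ only in arcs running from its inner to its outer boundary, or not at all. The paper secures exactly this with its separating planes, the explicit prism interpolations of the appendix, and property~4 of Lemma~\ref{lemma: existence of intermediate map}. There, each straight segment meets each level prism at most once, so the weight is constant along tree edges.

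Two further points in your construction need the same care.
\begin{itemize}
\item[(i)] Siblings touch pairwise, so a regular neighbourhood of $\tau_1\cup\tau_2\cup\tau_3$ is a solid torus, and that of $\tau_1\cup\dots\cup\tau_4$ is a genus-three handlebody. The spheres ``around'' these unions therefore cannot hug them, as your per-corner count implicitly assumes. Moreover, the ball around $\tau_1\cup\tau_2$ and the ball around $\tau_3$ must meet in a \emph{single} membrane disk reaching both contact points $\tau_1\cap\tau_3$ and $\tau_2\cap\tau_3$. So one of the balls has to be pushed across the hole, and likewise for $\tau_4$, which touches three siblings.
\item[(ii)] At a leaf, $K\cap\tau$ lies on $\partial\tau$, so growing a point into $S_\tau$ must cross up to three edges and the thickened vertices. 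A homothetic growth, for instance, has $\partial\tau\supset K\cap\tau$ as a level set, which is not finitely tangent. Thickened vertices also create the artificial components that Lemma~\ref{lem: vertex cubes map} is designed to control.
\end{itemize}

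All of this is repairable in your model. The holes can be filled through the open central octahedron of $\sigma$ and the open middle triangles of its faces, which miss the $1$-skeleton, so sweeping through them does not change the weight. The leaves can be handled by merging small balls around the four corners. But this has to be constructed and checked. The paper sidesteps the contact-point topology altogether by shrinking the ratio to $\varepsilon<1/2$. That turns every contact vertex into a short edge, so all cubes are disjoint and pairwise separated by planes.
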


Theorem \ref{lthm: not all knots} follows from theorem \ref{lthm: sphere decomposition} because it is known (see \cite{LunelSphereWidth}) that
\begin{equation*}
    sw(T(p, q))\ge \dfrac{2}{3}\min\{p, q\}.
\end{equation*}

Theorem \ref{lthm: sphere decomposition} follows by constructing an specific sphere decomposition from a finite iteration and the way the knot embeds in its $1$-skeleton. Intuitively, this sphere decomposition is very clear.  We see \say{snapshots} of this growth when we pick an iteration, from the zeroth one up to the one that includes the knot, and take as bubbles the tetrahedrons themselves. The challenge lies on proving that these snapshots are indeed part of a larger picture (i.e. one can build the sphere decomposition out of this data). Finally, the reason why the absolute bound is achieved is that every tetrahedron can contain at most two connected components of the knot (because there are only four vertices!). Thus, no matter how one tetrahedron grows into a bigger one, the components must glue together into at most two components. This constrain limits severely how the number of components can grow within a single sphere. 

\subsection{Organization of this paper}

In our paper we will work with polyhedrons. In section \ref{sec: Some geometric observations on polyhedra} we will review some properties and definitions we need from them. This section is complemented in the \ref{appx: Geometric Lemmas on Polyhedrons} where one of our main construction results is proven. We put this in an appendix to do not break the flow of the argument, having in mind that what is required there is not used, beyond the result itself, for the rest of the paper.

The choice of working on polyhedrons was taken by the author after trying to do all this somewhat explicitly within the smooth category. Nothing was really problematic except to guarantee that, at every stage that is needed, one can satisfy the conditions of transversality or finite tangency. I could not find a result that properly stated that all the constructions I was making did not, at some unfortunate point, created problems due to how it altered the knot. I am sure most of this constitutes nothing else than my own limitations on the knowledge of the area. On top of this, A.I. was screaming at me that this was absolutely true and known to everyone, except obviously me. This gave me pause and I decided to simply work with polyhedrons. I apologize if I overcomplicated the task without need.

In section \ref{sec: Sphere decomposition and spherewidth} we review the concepts of sphere decompositions and of spherewidth. We follow the work of \cite{LunelSphereWidth} for this. In section \ref{sec: Sphere Decomposition induced by the Sierpinski tetrahedron} we prove theorem \ref{lthm: not all knots} and theorem \ref{lthm: sphere decomposition}. This is a long section. We begin by modifying the construction of the finite iterations with tetrahedrons with a different one which, eventually, will let us to work with cubes. Then we will explain how to obtain the combinatorial information necessary to construct the sphere decomposition. We do this carefully to be able to control the growth of components within intermediate stages of the growth process. In here, we put special attention to the particular thickening of the vertices. We do this to make sure they do not make our bounds grow because we chose a bad construction.

In section \ref{sec: An example with the Trefoil} we make a concrete example with the trefoil. Our objective is to show that from the data of the embedding into a finite iteration one can indeed very easily read all the information needed to compute the sphere decompositon. 

Finally, in \ref{sec: Further Questions and Concluding Remarks} we pose two questions that follow from this work and that the author finds interesting. 

\subsection{Discussion on some previous literature and some comments}

During this work we will be assuming that the reader is familiar with the required contents of \cite{knotsinsidefractals}. The authors of the aforementioned paper, at the time of writing it, could not find a lot of material relating knots and fractals in the way we were interested in. Since then, some related results have appeared and some new information has been made available to the author.

To begin with, the work of Shtan'ko, \cite{shtanko1971menger}, settles the existence of knots already embedded in the Menger Sponge. However, it does it in a nonconstructive way and does not deal with the skeletons of the finite iterations, as we did in \cite{knotsinsidefractals}. The author was lead to this bibliography by none other than ChatGPT as we will discuss in our A.I. disclosure. Besides tame knots, the work of Shtan'ko also deals with wild knots, but once more does not do the embedding constructively. The recent work of Hinojosa, Morales-Fuentes, Valdez and Verjovsky, \cite{hinojosaetal}, deals with some of them in a constructive way. 

As far as my understanding goes, the results of Shtan'ko do not extend to the Sierpinski tetrahedron and thus do not complete our quest to know what happens in the fractal itself. Furthermore, I could not find (even with help of A.I.) literature that would immediately imply this result was already established for some unknown reason to me.

Related to our main tool, \textit{sphere decompositions}, the main work we use is Lunel's and De Mesmay's, \cite{LunelSphereWidth}. The idea, as we have mentioned before, is to understand how a knot can be \say{isolated} in growing spheres. However, similar ideas can be applied to knot diagrams. In particular, the concept of \textit{tree decompositions}, as developed in the work of De Mesmay, Purcell, Schleimer and Sedgwick, \cite{SchleimerTreeWidth}, is an approach the author tried but could not make work. The difference is that, when dealing with the knot diagrams that come from the combinatorial representations (see figure \ref{fig: combinatorial representation}), there are intersections that are really not there. They are simply the price to pay for flattening a highly non planar object. Thus, when trying to build \textit{bags}, as in \cite[Definition 2.1]{SchleimerTreeWidth}, with controlled size, these crossings made my bounds to not be uniform. This problem is what motivated me to try directly with the \textit{3D version} of the problem, which at first I tried to avoid.

For some nice examples of what happens when one deals with wild knots versus tame knots, I found Kobayashi's, \cite{kobayashi2021}, very illuminating.

\subsection{A.I. use disclosure}

In the process of writing this paper I have used ChatGPT and Deepseek. I am writing this introduction on the same week that the Navier-Stokes controversy is developing and I find it's best to be explicit with what we did and know with A.I.

ChatGPT Pro, the 20 dollar version, is what I used. I used it twice. The first time was to discuss the original result I wanted to prove. I discussed my main ideas on how one can possibly prove all knots were in the finite iterations. Nothing of depth ocurred. We discussed also the result of the Menger Sponge proven in \cite{knotsinsidefractals} with the hopes of adapting it to the Sierpinski. It is here where ChatGPT showed me Shtan'ko's work \cite{shtanko1971menger}. I wonder if it told me this because it was mentioned in the introduction of \cite{hinojosaetal}. I became aware of this latter paper by other means.

I eventually confessed to ChatGPT that recently I have doubted the conjecture to be true. I quote myself: \say{\textit{I do not know enough of knot theory to say this precisely, but I can imagine that there are knots that are REALLY knotted and thus any knot representative cramps no matter what you do. Hence, the connectivity of the Sierpinski simply does not give enough.}} (Yes, I used \textit{cramps} and I meant \textit{piles up}. I cannot explain.) At this comment, ChatGPT thought for a bit and referred to me to the concepts of \textit{spherewidth}, \textit{sphere decompositions} and \textit{tree decompositions}, as well as the papers \cite{LunelSphereWidth}, \cite{SchleimerTreeWidth}. Never before I have heard any of this.

It mentioned that my \say{cramped} knots would be torus knots and showed me the known lower bounds. We discussed for a long while because I could not grasp in its entirety all that it was explaining to me, but ultimately it turned into a plan of attack that became the backbone of what I present in this paper. I decided not to use more of ChatGPT for this project because I wanted to solidify the rest myself.

The second time I used ChatGPT was to know suggestions of programs I could use to make many of the several pictures you will see in this paper. It suggested many, among them Asymptote, which became extremely valuable. I did this because Deepseek did not suggest any option that seemed a good idea.

For the rest of the process, I worked with the free version of Deepseek. I used it to generate the codes to create the images in Asymptote. It helped me to do the tikz diagrams. It helped me to verify certain computational details in the propositions of the \ref{appx: Geometric Lemmas on Polyhedrons}. It also told me several results related to transversality in sphere decompositions are well known and provided me several sources. However, I did check this bibliography and I could not see in an straightforward way what I wanted. 

I have found working and learning with A.I. fascinating. It is deeply discouraging and somewhat painful to see what it is becoming.

\subsection{Ackwonledgements}

I take the opportunity to thank the mathematical community, whether researchers, educators or students, for their unexpected support in our work on knots and fractals. 

\section{Some geometric observations on polyhedra}
\label{sec: Some geometric observations on polyhedra}

Throughout the paper, including \ref{appx: Geometric Lemmas on Polyhedrons}, we will prominently refer to two families of polyhedra. We will agree that their definition is understood by the following drawings together with some brief explanations about them. All of these figures are well known.

\begin{description}
    \item[A rectangular prism:] A rectangular prism is a solid with six rectangular faces and any two faces that share an edge are orthogonal.

    \item[A prismoid or loft:] A \textit{prismoid or loft} is a convex polyhedron with two rectangular faces $ABCD$ and $abcd$ and corresponding vertices ($A$ with $a$, $B$ with $b$, etc) joined by an edge. 
\end{description}

We can see an example of each of the above polyhedra in figure \ref{fig: all polyhedrons}.

\begin{figure}[H]
    \centering
    \begin{subfigure}{0.48\textwidth}
        \centering
        \includegraphics[scale = 0.1]{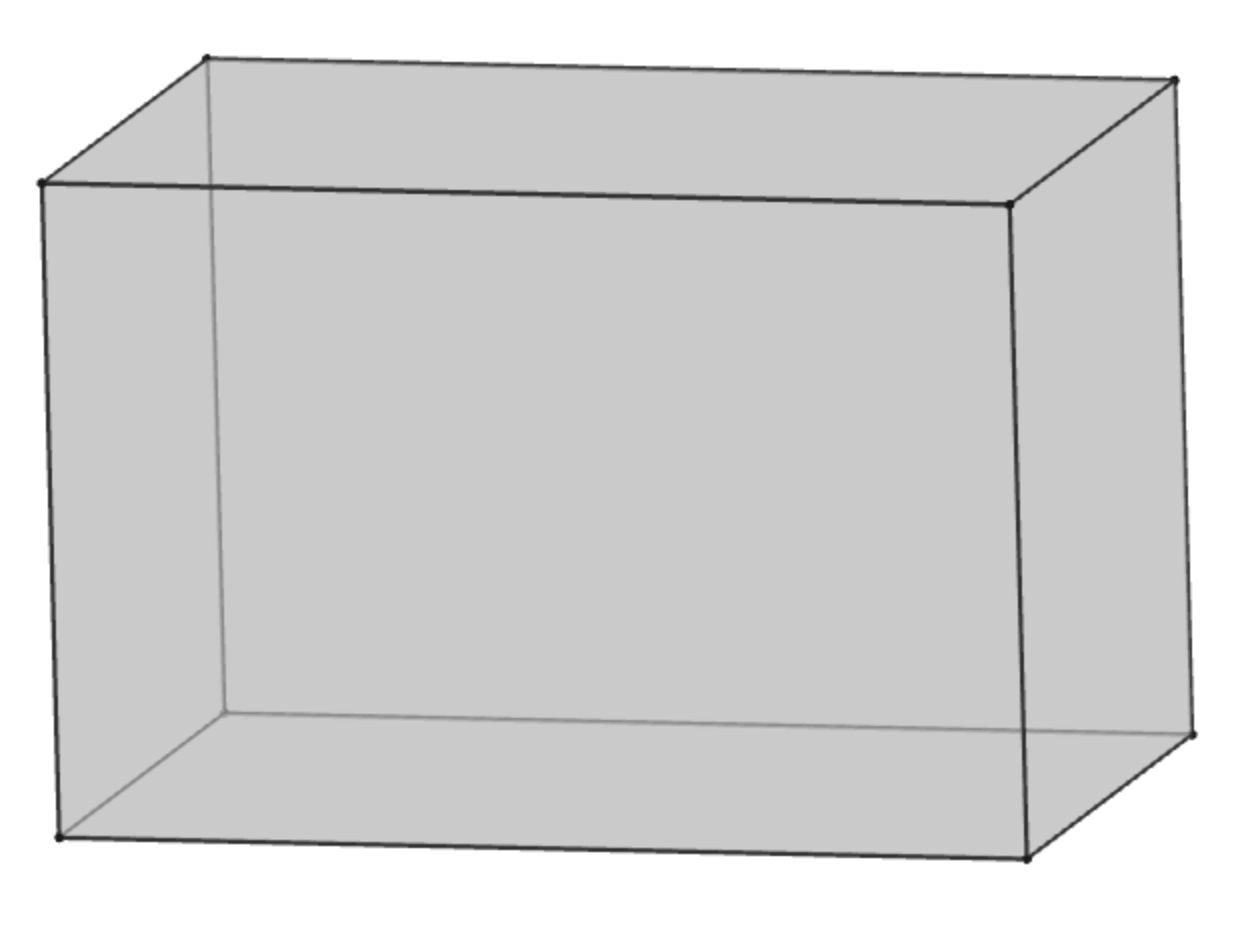}
        \caption{Rectangular Prism}
        \label{fig: rectangular prism}
    \end{subfigure}
    \hfill 
    \hfill 
    \begin{subfigure}{0.48\textwidth}
        \centering
        \includegraphics[scale = 0.1]{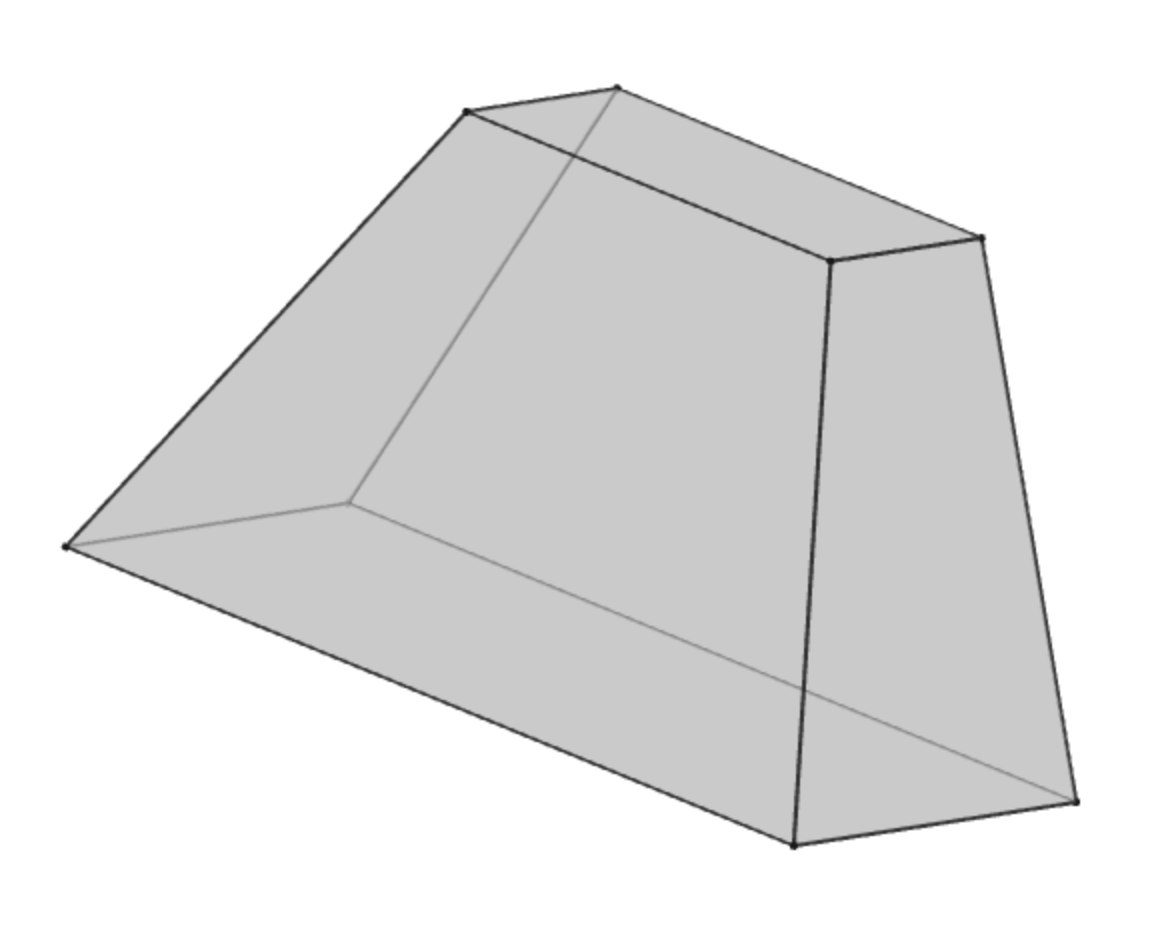}
        \caption{Prismoid or loft}
        \label{fig: prismoid}
    \end{subfigure}
    \caption{The polyhedrons we will find in this work}
    \label{fig: all polyhedrons}
\end{figure}

\subsection{A discussion on polyhedrons}

In this section we state a proposition on maps from certain polyhedral configurations onto segments. These segments will become the edges of the tree of our sphere decomposition. The result is proven in the appendix to do not be distracted from the flow of the argument.

\begin{definition}
    \label{def: configuration}
 We say two polyhedrons are in a \textbf{prism inside a prism configuration} if both polyhedrons are rectangular prisms, with parallel faces, and one is strictly inside the other. We denote this by \textbf{$PP$-configuration}.

In the above configuration there is exactly one polyhedron that wraps around the other. We call this the \textbf{surrounding or exterior} polyhedron of the configuration. We call the other polyhedron the \textbf{interior} ones. 
\end{definition}

The main result of this section follows.

\begin{proposition}
\label{prop: PP/LP existance of phi}
    Let $X\subset Y$ be in a $PP$-configuration. Define
    \begin{equation*}
        \mathcal{S}_{XY} := \overline{Y - X},
    \end{equation*}
    that is, the closed set wrapped between the two polyhedrons with the surfaces involved.
    There exists a continuous map $\Phi: \mathcal{S}_{XY} \longrightarrow [0, 1]$ such that
    \begin{enumerate}
        \item For each $0\le u \le 1$, $\Phi^{-1}(u)$ is a rectangular prism with faces parallel to those of $Y$. In particular, it is a polyhedral sphere $\mathbb{S}^2$.
        \item $\Phi^{-1}(0) = X$ and $\Phi^{-1}(1) = Y$.
    \end{enumerate}
    This results holds if $X$ is a point. The only change is the inequality in property $1$ should be $0 < u \le 1$.
\end{proposition}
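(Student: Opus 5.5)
Since both prisms have parallel faces, I will choose coordinates with axes orthogonal to the faces and write
\[
Y=[a_1,b_1]\times[a_2,b_2]\times[a_3,b_3], \qquad X=[c_1,d_1]\times[c_2,d_2]\times[c_3,d_3],
\]
with $a_i<c_i\le d_i<b_i$, since $X$ lies strictly inside $Y$. If $X$ is a point we simply take $c_i=d_i$. The plan is to interpolate linearly between the two boxes. For $u\in[0,1]$ set
\[
P_u=\prod_{i=1}^3\,[\,c_i-u(c_i-a_i),\; d_i+u(b_i-d_i)\,].
\]
Each $P_u$ is a rectangular box with faces parallel to those of $Y$. We have $P_0=X$ and $P_1=Y$, and $u<u'$ implies that $P_u$ lies in the interior of $P_{u'}$, because every lower bound strictly decreases and every upper bound strictly increases in $u$. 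The sets we want as level sets are the boundaries $\partial P_u$. (When the statement writes $\Phi^{-1}(0)=X$, it means the surface $\partial X$, consistent with ``the surfaces involved''.)

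The key step is to show that the boundaries $\partial P_u$ foliate $\mathcal S_{XY}$, i.e. that each $p\in\mathcal S_{XY}$ lies on exactly one $\partial P_u$. For each coordinate define
\[
u_i(p)=\max\Big\{\frac{c_i-p_i}{c_i-a_i},\;\frac{p_i-d_i}{b_i-d_i},\;0\Big\}.
\]
Then $p_i$ lies in the $i$-th interval of $P_u$ if and only if $u\ge u_i(p)$, because both constraints are monotone in $u$. Hence $p\in P_u$ if and only if $u\ge \Phi(p):=\max_i u_i(p)$. Since the family is strictly nested, $p\in\partial P_u$ exactly when $u=\Phi(p)$. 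For $p\in Y\setminus \operatorname{int}X$ this gives $\Phi(p)\in[0,1]$, and so $\Phi^{-1}(u)=\partial P_u$. The map $\Phi$ is continuous, being a maximum of finitely many affine functions. This yields both properties: $\partial P_u$ is a polyhedral $2$-sphere, being the boundary of a convex $3$-dimensional box, and $\Phi^{-1}(0)=\partial X$, $\Phi^{-1}(1)=\partial Y$.

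The main point requiring care is the degenerate case where $X$ is a point. There $P_0$ is a point, so it is not a sphere, while for $u>0$ the box $P_u$ has positive side lengths $u(b_i-a_i)$ and is a genuine sphere. This is exactly why the statement restricts to $0<u\le1$ in that case. One also checks that the denominators $c_i-a_i$ and $b_i-d_i$ are strictly positive, which follows from strict containment. No other obstacle is expected: the construction is essentially a piecewise-linear radial ``gauge'' function adapted to two nested boxes.
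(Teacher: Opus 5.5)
Your proof is correct, and it produces exactly the same map as the paper's. There the $u$-th level set is the box with vertices $(1-u)V+uv$ for corresponding vertices $V$ of $X$ and $v$ of $Y$, which in your coordinates is precisely $P_u$. The route is different, though.

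The paper cuts $\mathcal{S}_{XY}$ into six lofts, one for each pair of corresponding faces, and parametrizes each by the trilinear interpolation $F(s,t,u)$ between the two faces. It then:
\begin{enumerate}
\item proves $F$ injective, since the intermediate slices are nondegenerate, hence convex, parallelograms in parallel planes;
\item obtains continuity of each piece $\Phi_{\mathcal{R}}$ from the closed map lemma;
\item shows that two lofts meet only in a shared lateral face, on which the two definitions agree, and glues;
\item checks with an Euler characteristic count that the six level rectangles close up into a sphere.
\end{enumerate}

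You instead write down the gauge function $\Phi=\max_i u_i$ globally. Continuity is immediate since it is a maximum of affine functions. The identification $\Phi^{-1}(u)=\partial P_u$ follows from the monotonicity of the interval endpoints. The sphere property is free because $\partial P_u$ bounds a convex box. This avoids injectivity, the closed map lemma, gluing and the Euler characteristic. It also shows directly that every point of $\mathcal{S}_{XY}$ lies on exactly one level set, which in the paper rests on the pictorially justified claim that the six lofts exhaust $\mathcal{S}_{XY}$. And it handles the degenerate point case explicitly instead of leaving it to the reader.

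One small precision: strict nestedness by itself only gives $\partial P_u\subseteq\Phi^{-1}(u)$. The reverse inclusion uses, for $u>0$, that the maximum is attained by a tight constraint, so some coordinate of $p$ is an endpoint of the corresponding interval of $P_u$. For $u=0$ it uses $p\notin\operatorname{int}X$. Both are immediate from your formula.

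What the paper's heavier machinery buys is flexibility. It is phrased for interpolation between general planar quadrilaterals, so it would adapt to configurations that are not two axis-parallel boxes. Your argument uses the product structure of two axis-parallel boxes in an essential way---which is exactly what the proposition assumes.
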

\begin{proof}
    This is proven in the appendix.
\end{proof}
\begin{remark}
    We can change the interval $[0, 1]$ by any other closed, nondegenerate, interval $[a, b]$. We will often do this without further comment.
\end{remark}

The role of $\Phi$ is to parametrize the level sets as $X$ grows and becomes $Y$. In figure \ref{fig: L grows to P overall} we see a particular example. Notice how each of the level sets is a rectangular prism.

\begin{figure}[H]
    \centering
        \includegraphics[scale = 0.3]{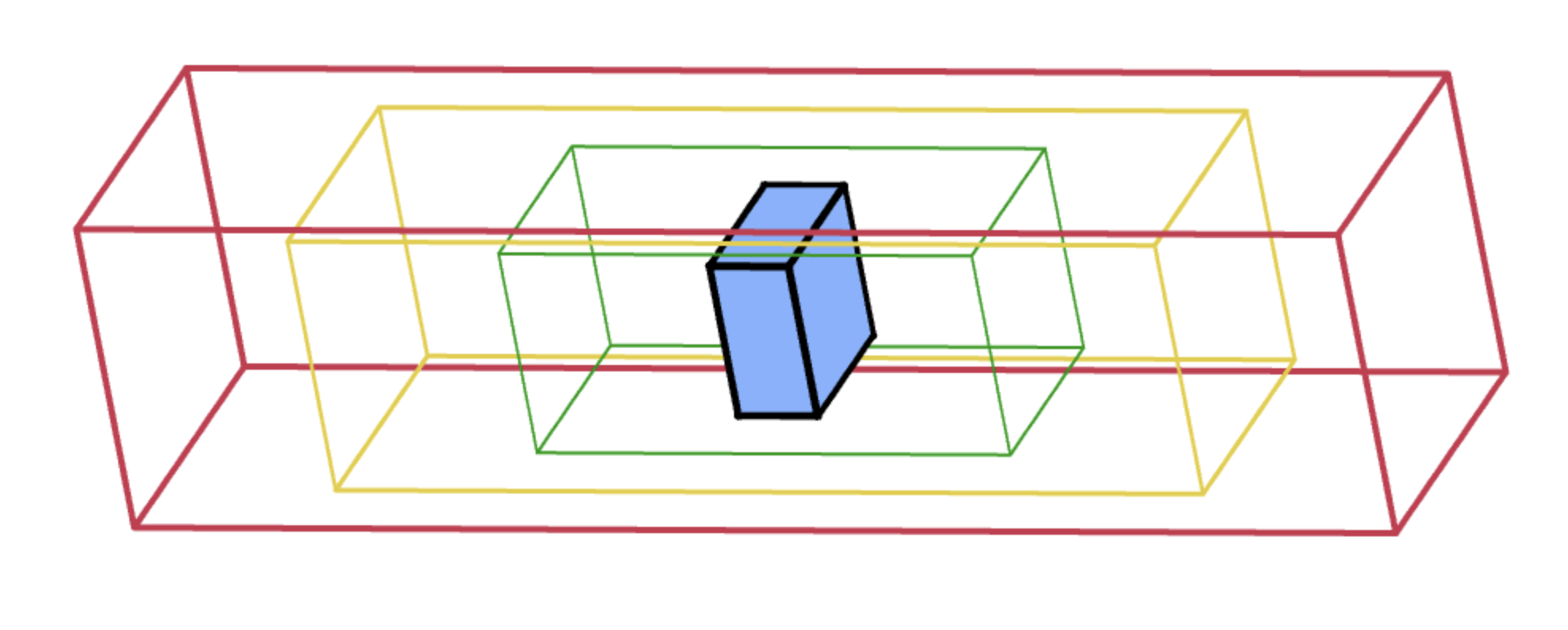}
        \label{fig: L grows to P separate}
    \caption{The level sets $\Phi^{-1}(0), \Phi^{-1}(1/3), \Phi^{-1}(2/3)$ and $\Phi^{-1}(1)$.}
    \label{fig: L grows to P overall}
\end{figure}

\begin{remark}
    The reason why proposition \ref{prop: PP/LP existance of phi} is not immediately obvious is because the rectangular prisms may not be homothetic or similar. Hence, one must take care of how to build the map.
\end{remark}

\section{Sphere decomposition and spherewidth}
\label{sec: Sphere decomposition and spherewidth}

In here we review the main tool for our purposes. We are following \cite{LunelSphereWidth}.

\subsection{Review of the main concepts: double bubbles}

 We begin with a definition.

\begin{definition}
    \label{def: double bubble}
A \textbf{double bubble} is a triple $(D_1, D_2, D_3)$ of closed disks in $\mathbb{S}^3$. They are disjoint except on their boundaries, where
\begin{equation*}
 D_1 \cap D_2 = D_1 \cap D_3 = D_2 \cap D_3 = D_1 \cap D_2 \cap D_3 = \partial D_1 = \partial D_2 = \partial D_3   
\end{equation*}
\end{definition}

In figure \ref{fig: generic double bubble} we can see a generic example of a double bubble. 
\begin{figure}[H]
    \centering
    \includegraphics[scale = 0.1]{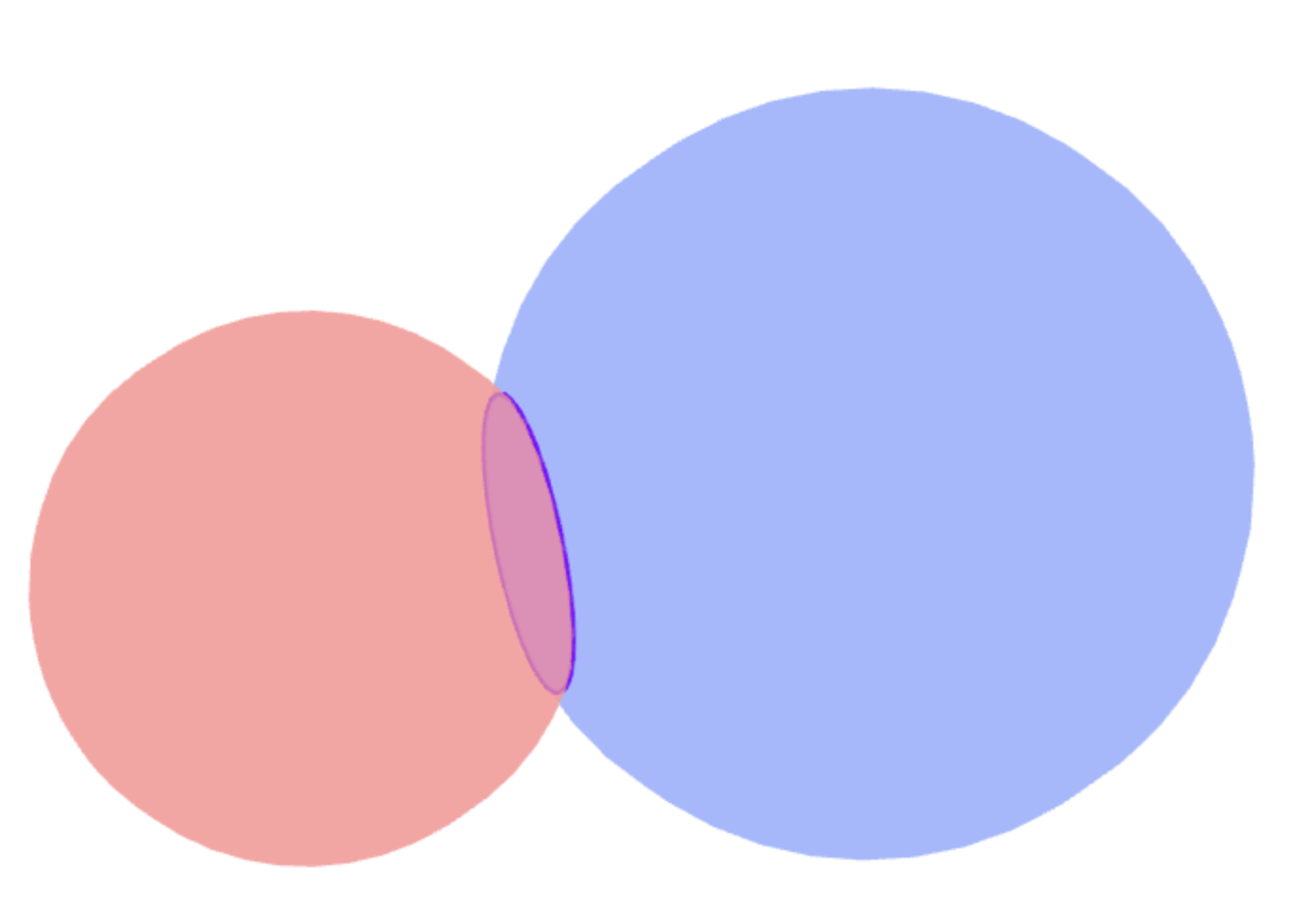}
    \caption{A generic double bubble.}
    \label{fig: generic double bubble}
\end{figure}
For our purposes, the way the double bubbles will be constructed is by two prisms that \say{merge} into a bigger prism. We can see the two situations in figure \ref{fig:doublebubbles}.

\begin{figure}[H]
    \centering
        \includegraphics[scale = 0.1]{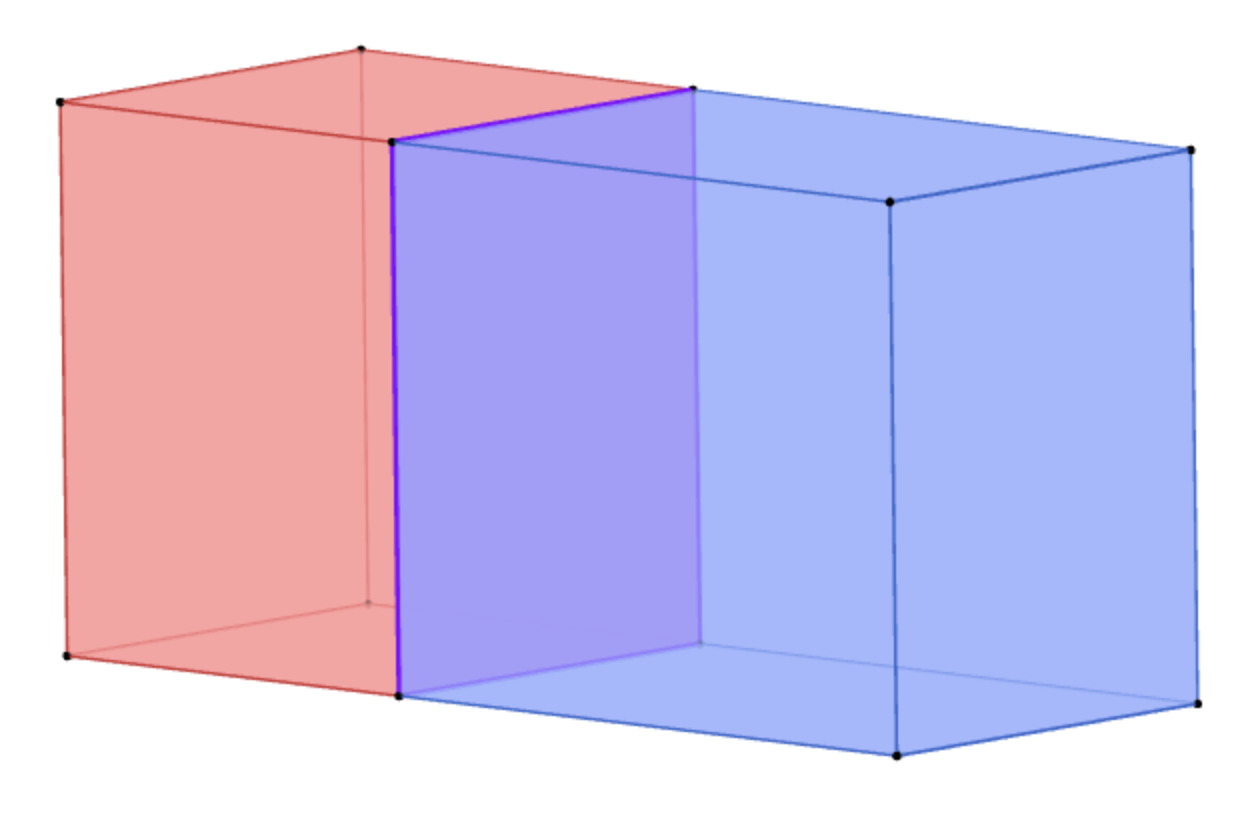}
    \caption{The double bubbles we will find in our work.}
    \label{fig:doublebubbles}
\end{figure}

\subsection{Review of the main concepts: thickening and transversality}

The following definitions are taken from \cite[section 2]{LunelSphereWidth}.

\begin{definition}
\label{def: thickened embedding}
    A \textbf{thickened embedding} of a graph $G$ is an embedding of $G$ in $\mathbb{S}^3$ where each vertex is thickened to a small ball. Two balls are connected by a polygonal edge if an only if they are adjacent in the graph $G$, and pairs of edges are disjoint. 
\end{definition}
\begin{remark}
    We emphasize that the edges themselves are not thickened.
\end{remark}

The main object for our work, which we will shortly see, requires that we work with the above type of graphs. For us the graph will be one of the $1$-skeletons of the finite iterations or the path the knot follows on it. In particular, the need for thickening will require from us to be careful on how the vertices are thickened to spheres.

\begin{definition}
    \label{def: transversal definition}
\begin{enumerate}
    \item Two surfaces embedded in $\mathbb{S}^3$ are \textbf{transverse} if they intersect in a finite number of connected components, where the intersection is locally homeomorphic to the intersection of two orthogonal planes.

    \item A knot and a surface are \textbf{transverse} if they intersect in a finite number of connected components, where the intersection is locally homeomorphic to the intersection of a plane and an orthogonal line.

    \item A surface and a ball are \textbf{transverse} if the surface and the boundary of the ball are transverse.

    \item A surface and a graph are \textbf{transverse} if the surface is transverse to all the thickened vertices and edges it intersects.

    \item A double bubble and a graph or surface are \textbf{transverse} if each of the three spheres it defines is and if the vertices of the graph do not intersect the spheres on their shared boundaries.
\end{enumerate}
\end{definition}

The concept of transversality, as defined above, is an adaptation of the usual notion of transversal intersections, to cover also the case of thickened embeddings. The opposite concept to it is that of tangency, which we also define to be precise in the context we need it.

\begin{definition}
    \label{def: tangency}
\begin{enumerate}
    \item Intersections between pairs of objects as the ones discussed in definition \ref{def: transversal definition} are said to be \textbf{tangent} when they are not transversal.

    \item A sphere $S$ and a graph $G$ are said to be \textbf{finitely tangent} when they do not intersect transversely but the number of intersections is finite.
\end{enumerate}  
\end{definition}

Through our work, where we will deal with the above concepts repeatedly, we will have to be careful of checking that our intersections are either transversal or finitely tangent.

\subsection{Review of the main concepts: sphere decompositions}

The following is \cite[Definition 2.1]{LunelSphereWidth}. We recall that for a graph $G$ we denote its set of leaves and vertices by $L(G)$ and $V(G)$, respectively. 

\begin{definition}
    \label{def: sphere decomposition}
Let $G$ be a graph thickly embedded in $\mathbb{S}^3$. A \textbf{sphere decomposition} of $G$ is a continuous map $f:\mathbb{S}^3\longrightarrow \cT$ where $\cT$ is a trivalent tree with at least one edge. It satisfies
\begin{description}
    \item[(SD 1)] For all $x\in L(\cT)$, $f^{-1}(x)$ is a point disjoint from $G$.

    \item[(SD 2)] For all $x\in V(\cT)-L(\cT)$ , $f^{-1}(x)$ is a double bubble transverse to $G$.

    \item[(SD 3)] For all $x\in \cT$ interior to an edge, $f^{-1}(x)$ is a sphere transverse or finitely tangent to $G$.
\end{description}
\end{definition}
\begin{remark}
    We emphasize that in the above, the graph $G$ is thickened, so the vertices are balls and the concept of transversality is related to intersections with its surface.
\end{remark}

Associated to a given sphere decomposition, we have its weight, which is a way to measure how much the knot spreads through the different spheres in the decomposition. First, we have the following general concept.
\begin{definition}
    \label{def: weigh of S with respect to G}
Let $G$ be a thickly embedded graph and $S\subset\mathbb{S}^3$ a sphere. The \textbf{weight of $S$ with respect to $G$} is the number of connected components of $S\cap G$. We denote if by $w(S, G)$ or $w(S)$ when there is no possibility of confusion.
\end{definition}

As promised, we now have the following 
\begin{definition}
    \label{def: width}
Let $G$ be a graph. Let $f:\mathbb{S}^3\longrightarrow \cT$ be a sphere decomposition of a thick embedding of $G$. We define \textbf{the width of $f$} as the supremum of the widths of the spheres $f^{-1}(x)$ for all $x$ interior to the edges. That is,
\begin{equation*}
    w(f) := \displaystyle\sup_{x\in G - V(G)} w(f^{-1}(x)).
\end{equation*}
Furthermore, the \textbf{spherewidth} of $G$ is the infimum of the widths of its sphere decompositions. That is,
\begin{equation*}
    sw(G) := \displaystyle\inf_{f:\mathbb{S}^3\longrightarrow \cT} w(f),
\end{equation*}
where $f$ runs over all sphere decompositions of $G$.
\end{definition}
\begin{remark}
    For any explicit sphere decomposition $f_0:\mathbb{S}^3\longrightarrow \cT$ of $G$, we have
    \begin{equation*}
        sw(G) \le w(f_0).
    \end{equation*}
\end{remark}
\begin{remark}
    The spherewidth depends on $G$ but not in any particular thick embedding. Rather, the infimum takes all of those into account. This is similar to how knot invariants defined over knot diagrams depend on the knot, but not on the diagram itself. 
\end{remark}

\section{Sphere Decomposition induced by the Sierpinski tetrahedron}
\label{sec: Sphere Decomposition induced by the Sierpinski tetrahedron}
\subsection{The change of $1$-skeleton}

Throughout this section we will suppose we have given a knot $K$ embedded on the one skeleton of the $n$-th iteration of the Sierpinski tetrahedron. 

For purposes of our result, it will be useful to modify slightly the construction of the $1$-skeletons. The finite iterations are obtained by subsequently removing part of regular tetrahedrons. We can modify the iteration process by not substituting the pyramid with four copies of itself with ratio $1/2$ but with any other smaller ratio $\varepsilon > 0$. 

If, while repeating this iteration, we leave the edges of previous iterations we get a one skeleton with extra edges. Indeed, each vertex of the finite iteration, where two of the smallest pyramids meet, corresponds to an edge whose endpoints are the vertices. We can compare how the second iterations of both processes look in figure \ref{fig: comparing two sierpinskis}. A factor of $1/4$ was used for the modified pyramid construction.

\begin{figure}[H]
    \centering
    \begin{subfigure}{0.48\textwidth}
        \centering
        \includegraphics[scale = 0.2]{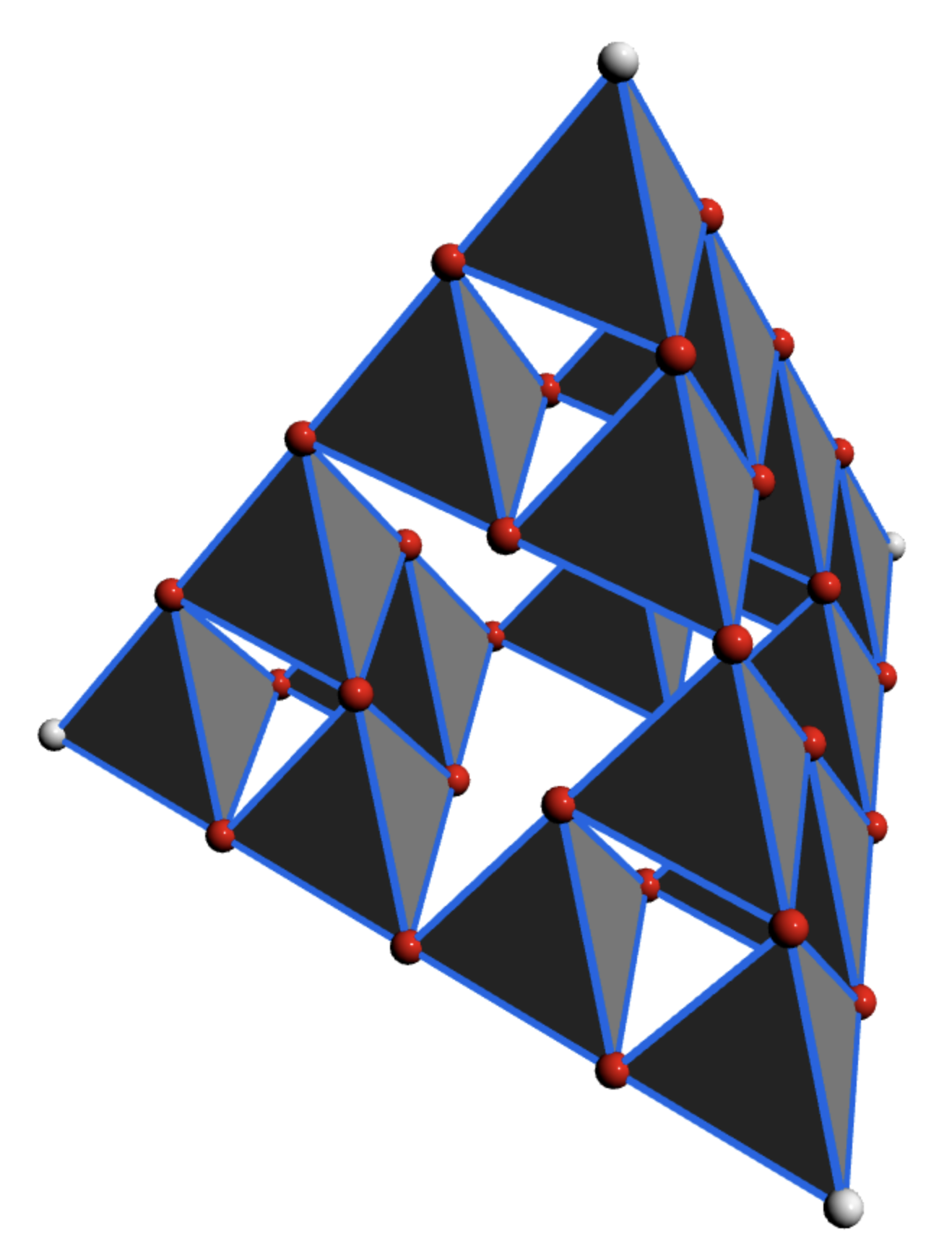}
        \caption{Second iteration for the Sierpinski tetrahedron.}
        \label{fig: regular Sierpinski}
    \end{subfigure}
    \hfill 
    \begin{subfigure}{0.48\textwidth}
        \centering
        \includegraphics[scale = 0.2]{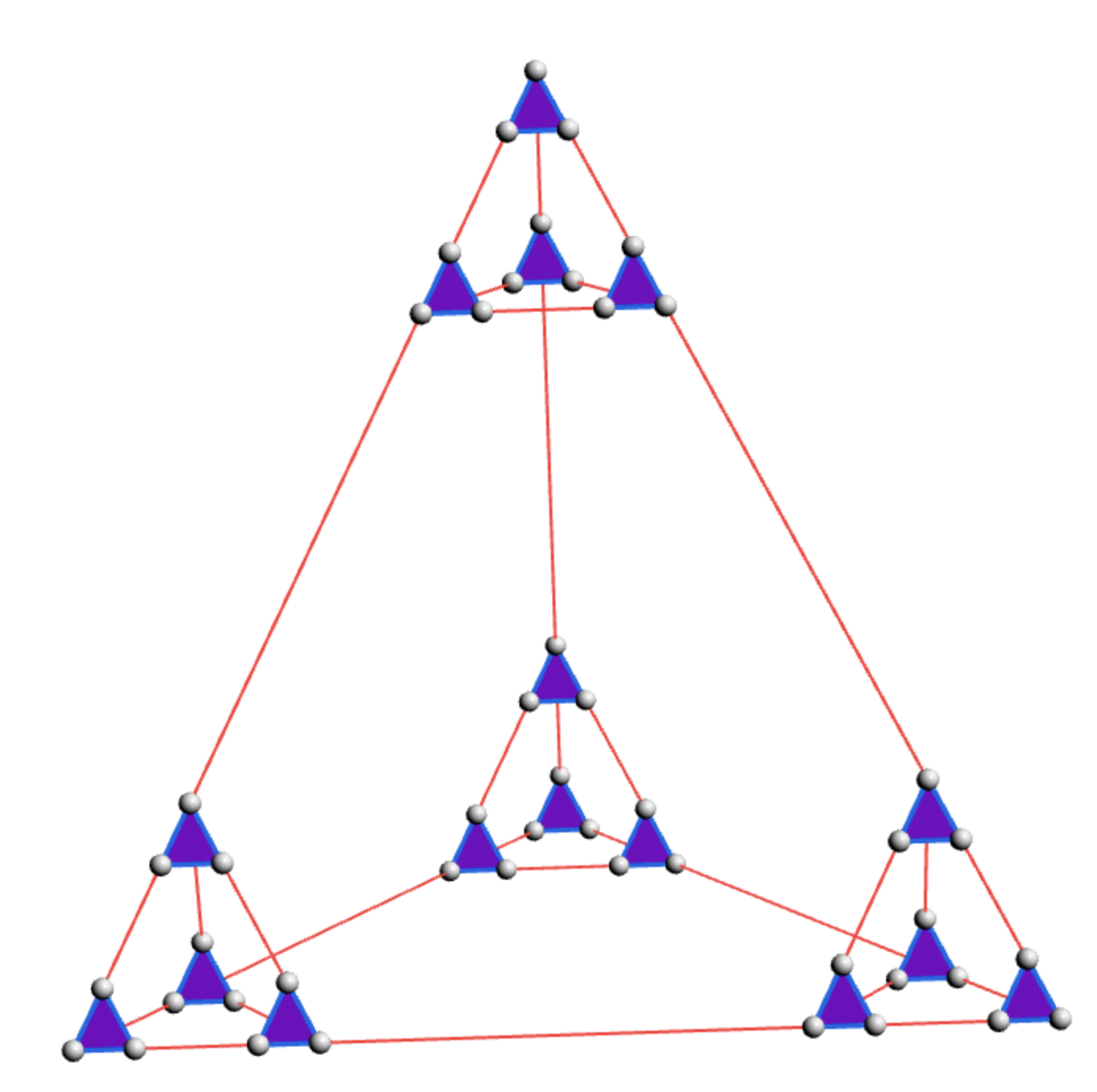}
        \caption{Second iteration for the modified construction.}
        \label{fig: Right Angle Sierpinski}
    \end{subfigure}
    \caption{Comparison of the one skeletons of both constructions.}
    \label{fig: comparing two sierpinskis}
\end{figure}
The important thing to notice is that, even though the graphs are different, the knots embedded in them are the same because those extra edges can be contracted back to a point. In figure \ref{fig: Right Angle Sierpinski} we see the red edges correspond to the red vertices of \ref{fig: regular Sierpinski}. Only the original vertices do not change to an edge because they do not touch another pyramid there.

Finally, we can enclose each of the tetrahedrons in a cube whose faces are transversal to the edges of the $1$-skeletons. Indeed, the edges of the $1$-skeleton comprise six different directions. Consequently, this requirement is the complement of a finite number of linear conditions and thus achievable. By enlarging the cubes slightly we can suppose the edges intersect the faces of the cube in the interior of the faces and not on edges or vertices. 

\begin{figure}[H]
    \centering
    \begin{subfigure}{0.48\textwidth}
        \centering
        \includegraphics[scale = 0.2]{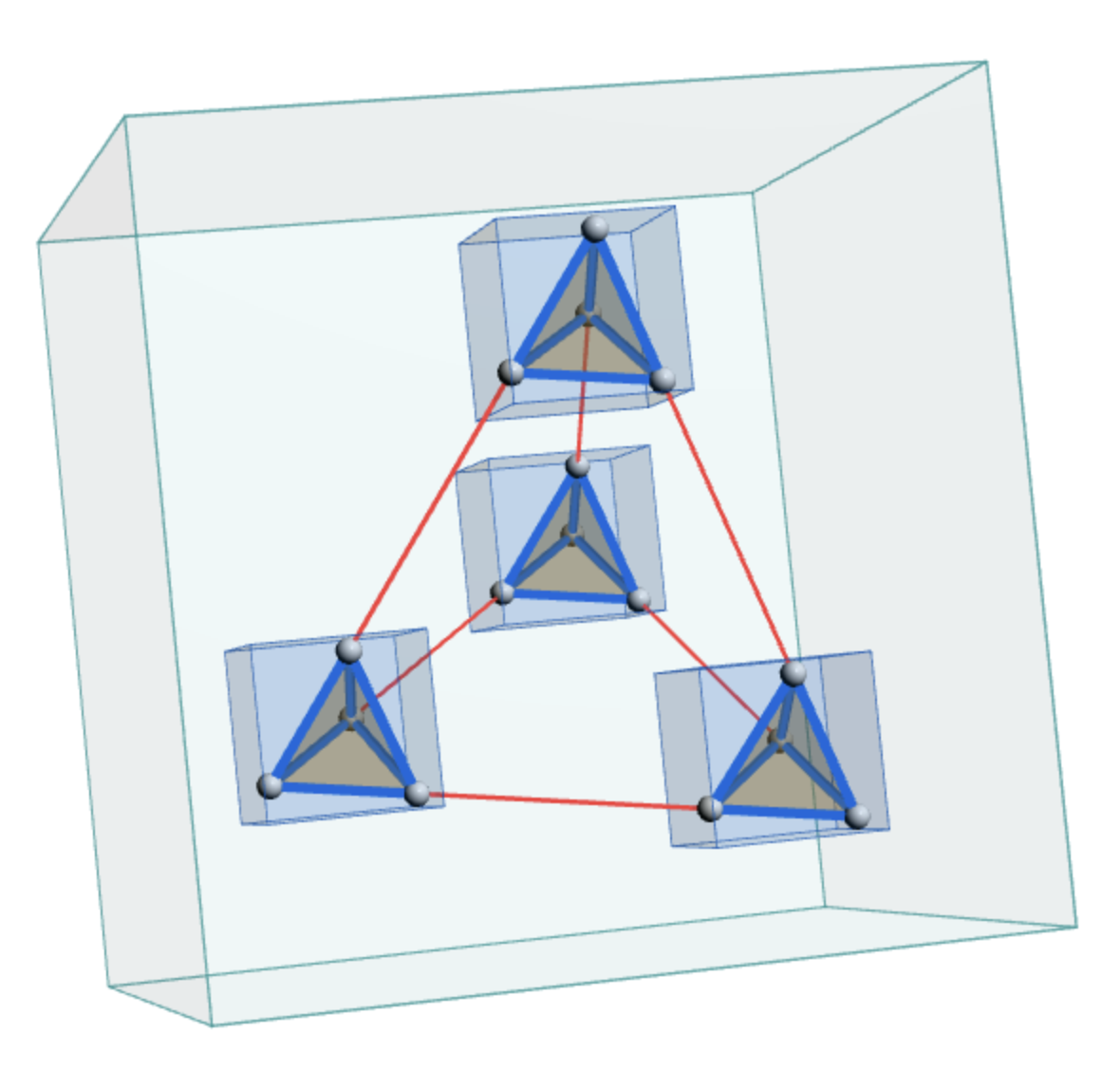}
        \caption{The configuration after a single iteration.}
        \label{fig: Fixed Config First}
    \end{subfigure}
    \hfill 
    \begin{subfigure}{0.48\textwidth}
        \centering
        \includegraphics[scale = 0.2]{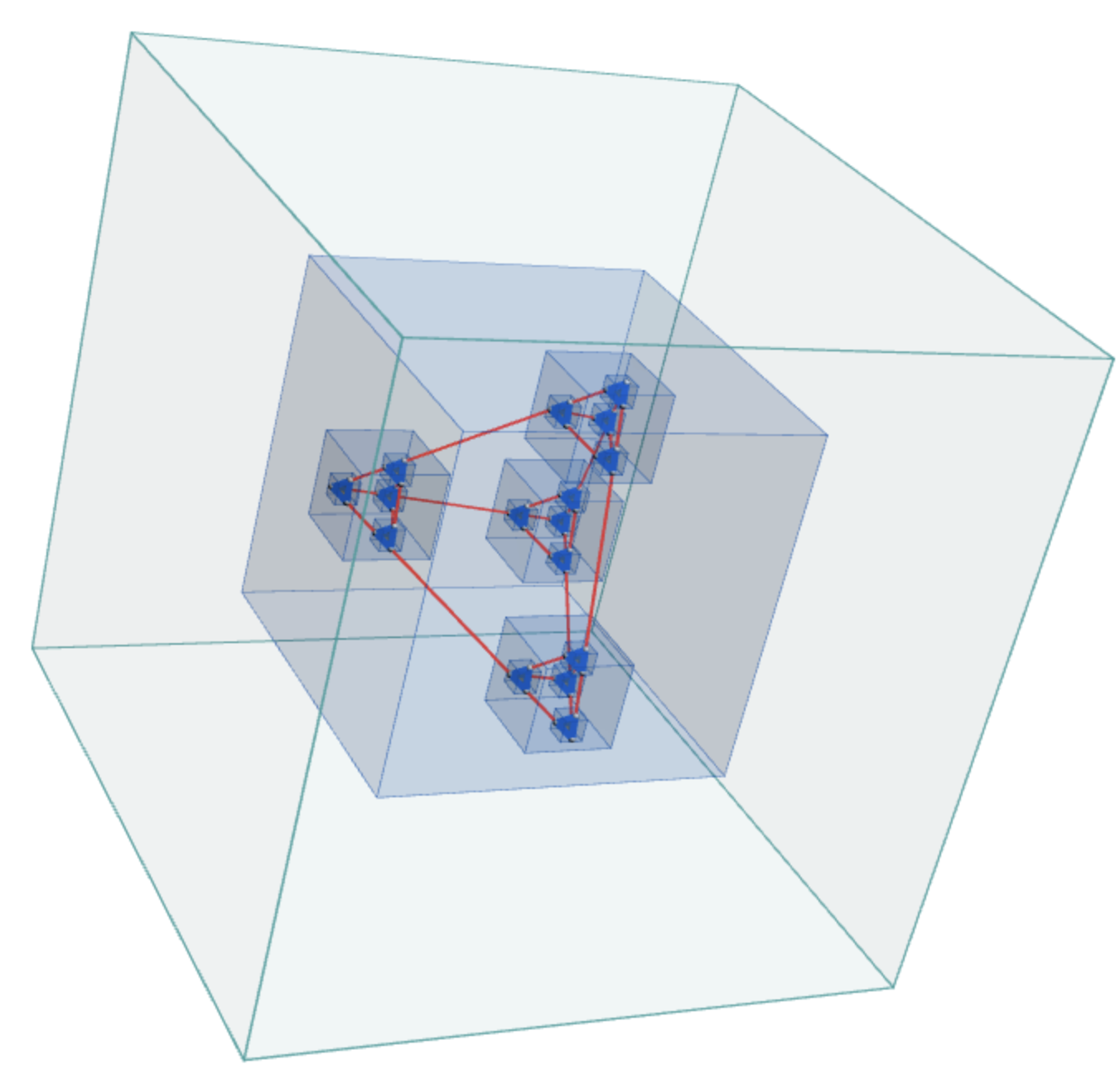}
        \caption{The configuration after the second iteration}
        \label{fig: Fixed Config Second}
    \end{subfigure}
    \caption{Configurations with pyramids inside cubes and replaces slanted edges.}
    \label{fig: Overall Fixed Configuration}
\end{figure}

By iterating this $n$ times we can assume we are working with a knot embedded on the $1$- skeleton of a graph constructed as above.

We now introduce a definition that will make our description easier to follow.

\begin{definition}
    \label{def: smallest tetrahedron}
    We say a tetrahedron $\Delta$, of any iteration, is a \textbf{smallest tetrahedron} if 
    \begin{equation*}
        \emptyset \neq K \cap \Delta \subseteq (1-\text{skeleton of }\Delta).
    \end{equation*} 
    In other words, of $\Delta$, $K$ only uses its edges but never goes into the faces or interior. 
    
    We say a cube is a \textbf{smallest cube} if the biggest tetrahedron in its interior is a smallest tetrahedron.
\end{definition}

Our last enhancement of our construction is to surround each vertex of the smallest tetrahedrons with a cube, parallel to all the cubes in our construction, disjoint among themselves and within the interior of the corresponding smallest cube. 

\begin{definition}
    \label{def: vertex cubes}
    The cubes just constructed will be called \textbf{vertex cubes}.
\end{definition}

\subsection{The thickening of the vertices}

Definition \ref{def: sphere decomposition} of sphere decomposition involves a thickened embedding of $K$. So far, we have only modified the $1$-skeleton so that we are able to surround individual tetrahedrons in their own cubes as shown in figure \ref{fig: Overall Fixed Configuration}. In this subsection we briefly discuss the vertices. As we shall see, we can thicken them with a small polyhedron as long as we have control of a particular subtlety that arises.

In the definition of sphere weight, we intersect the sphere $S$ with the thickened graph and count the connected components of the intersection. In the sphere decomposition we will build, all our spheres will be rectangular prisms whose faces are parallel to three orthogonal fixed planes. Consequently, we must understand how these families of planes intersect the \textit{thickened} vertex 

As we sweep with planes that pass through the vertex, as if they were scanning the zone the vertex lies in, what we desire is to obtain a count of how the edges grow and merge into a corner. However, we must also count the intersection of this sweeping plane with the sphere. 

\begin{figure}[H]
    \centering
    \includegraphics[scale = 0.2]{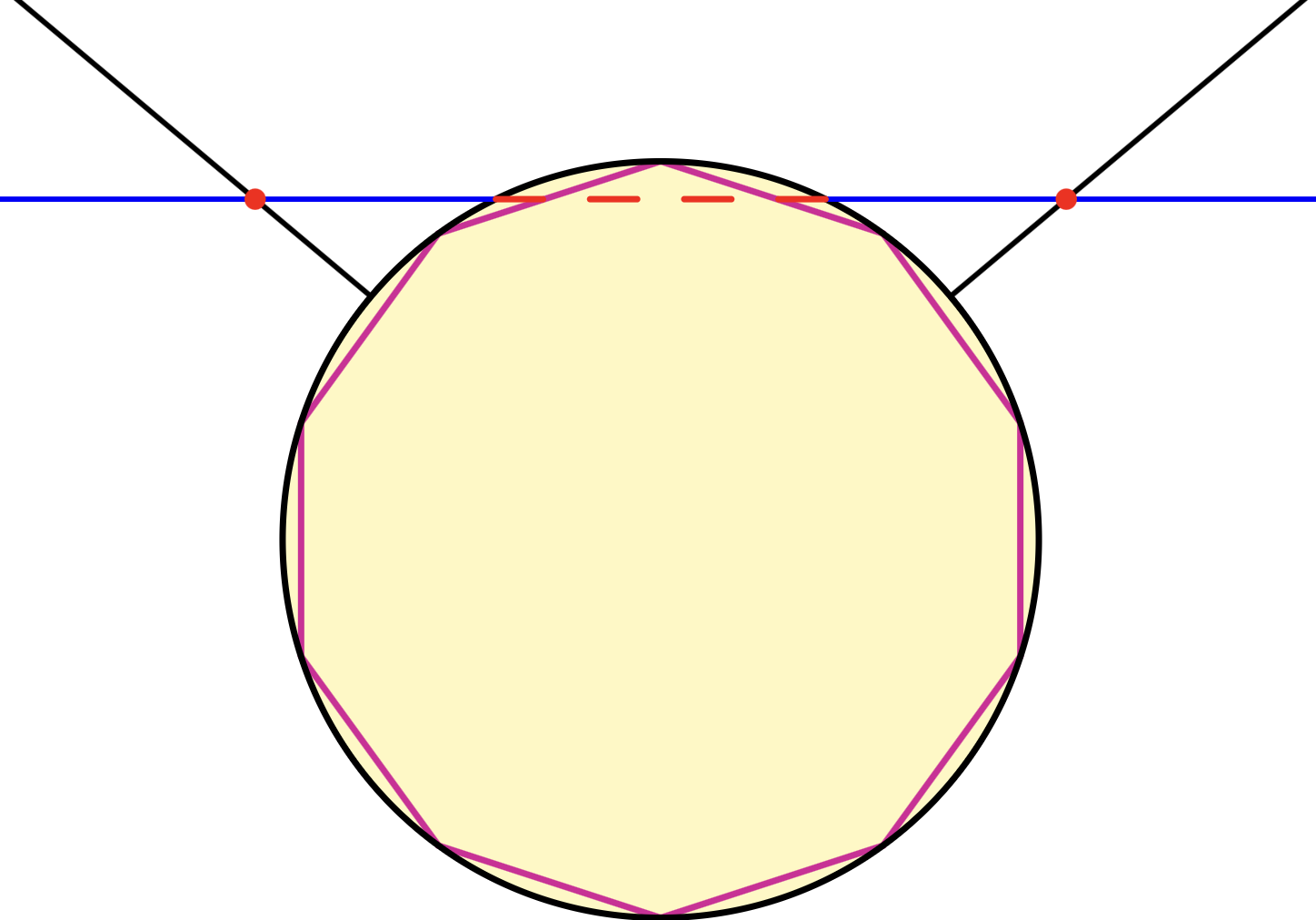}
    \caption{Artificial intersection with a sweeping plane}
    \label{fig: artificialintersection}
\end{figure}

As we can see in figure \ref{fig: artificialintersection}, as we approach the vertex we intersect the sphere in a component that is artificial. It will eventually merge with the other points on the edges but, nevertheless, made the weight of the sphere (i.e. the sweeping plane) to grow. We cannot disappear this problem by choosing a small enough sphere or similar. These intersections are features of the round shape of the sphere (or the polyhedron). Consequently, to prove an upper bound on the spherewidth, we must guarantee our thickening does not make the weight grow artificially beyond our desired bound. We will prove in due time our sphere decomposition satisfies this.

\begin{remark}
  There is another possibility to handle this. We can design an appropriate piece so that, if we are careful with our construction of the tree decomposition, these artificial intersections do \textit{not} happen. This leads to the theory of separating polygons, Dirichlet - Voronoi cells, etc. 
  
  Thus, there is a dichotomy: we can let them happen but with bounded contribution or we can eliminate them. Once we make this choice, we design the tree decomposition.   

\end{remark}

With this in mind, we put around each vertex of each smallest pyramid a thickening polyhedron contained in the vertex cube. At the last step of our calculation, we will reduce the size of each of these polyhedrons. We will see this doesn't create conflict with any construction as we can put the correct ones from the very start once we know what extra property we require from them.

\subsection{The subdivision}

Once the above has been carried out we have the following data: 
\begin{enumerate}
    \item A finite number of cubes arranged in the way described above. Each cube, except for the vertex cubes, has inside exactly 4 smaller cubes arranged as in figure \ref{fig: Fixed Config First}.
    \item A cycle through some of the edges described above that knots as $K$. 
\end{enumerate}

All the above information induces a subdivision into closed sets of $\mathbb{S}^3$ as follows:
\begin{description}
    \item[The exterior] The closure of the exterior of the largest cube. This includes the point at infinity and is homeomorphic to a ball $\mathbb{B}^3$.

    \item[The intermediate regions] Each cube $C$, except for the vertex cubes, has up four disjoint cubes inside of it. 
    
    However, we shall only consider the cubes if the knot $K$ intersects it. Otherwise we do not collocate a cube there as we can remove that part of the skeleton without altering the knot embedding. If $C_1,..., C_k$ are the remaining cubes, the closed set is
    \begin{equation*}
        \overline{C-(C_1\cup \cdots \cup C_k)}.
    \end{equation*}

     \item[The vertex cubes] A finite number of vertex cubes that inside hold a thickened vertex that belongs to the path traced by $K$. The other vertex cubes, that were not visited, and the edges connecting to them are erased, without modifying the path $K$ traces. 
\end{description}

\begin{example}
    \label{ex: intermediate region}

A generic intermediate region can be seen in figure \ref{fig: intermediate region}.
    \begin{figure}[H]
        \centering
        \includegraphics[width=0.4\linewidth]{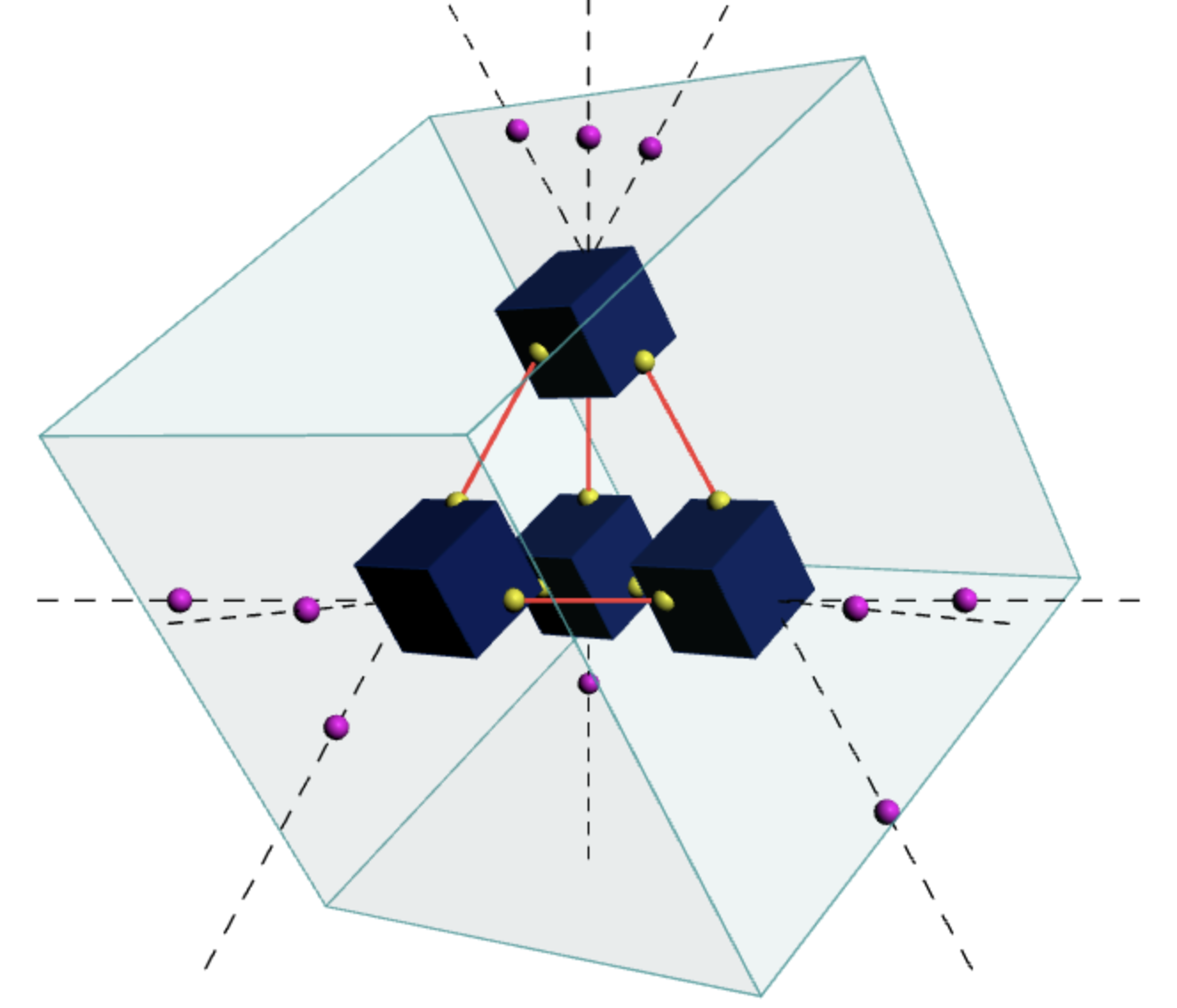}
        \caption{An intermediate region}
        \label{fig: intermediate region}
    \end{figure}
Notice that, except for cubes that surround one of the corners of the original pyramids, every cube has a way to go out of $C$ because it must connect to some other pyramid in another intermediate region. This possible way outs are marked by the purple vertices and dashed lines.

From each interior cube there is only \textbf{one} connecting line outside. The three dashed lines presented for each interior cube show the possibilities for this connecting line. Which one it is depends on the location of the cube within the pyramid. Furthermore, the cubes that contain the corners of the initial tetrahedron do not have any connecting lines to the exterior of the surrounding cube.

On the other hand, in figure \ref{fig: intermediate region}, we see that the yellow points on a cube are potential intersections of the knot with the surface of these interior cubes. It is possible a knot intersects in all of them, some of them, or that it uses none in case that cube is not visited. In the latter case, we have agreed to not put any cube and remove all the part of the $1$-skeleton that would be there.

We emphasize that figure \ref{fig: intermediate region} continues to hold for the smallest regions, where these cubes are the vertex cubes (i.e. the interior cubes we have thickened vertices as opposed to another tetrahedron)
\end{example}

The strategy we will follow is the next one: for each one of these closed sets $\cC$ we will construct a continuous map $f_{\cC}: \cC \longrightarrow \cT_{\cC}$, where $\cT_{\cC}$ is a trivalent tree. We will then glue the trees $\cT_{\cC}$ to form a trivalent tree $\cT$. Finally, the gluing lemma will imply the corresponding map $f: \mathbb{S}^3\longrightarrow \cT$ is continuous. Once we have done this, we shall verify $f$ is a sphere decomposition.

The following is the fact which makes the whole computation work.

\begin{proposition}
    \label{prop: bound on cubes}

Let $C$ be any of the cubes of the above described subdivision. 
\begin{enumerate}
    \item The intersection $\partial C \cap K$ has at most four connected components of $K$ (all them points). 
    \item The intersection $C\cap K$ has at most two connected components.
\end{enumerate}
\end{proposition}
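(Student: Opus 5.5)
The plan is to read off both bounds from the combinatorics of the modified $1$-skeleton, where every cube sits around exactly one tetrahedron. First I will show that the only edges of the skeleton crossing $\partial C$ are the \emph{connecting edges} leaving $C$. These are the edges created in the modified construction, which join a vertex of the tetrahedron inside $C$ to a vertex of a neighbouring tetrahedron of the same iteration. Cubes of the same stage are pairwise disjoint, and each cube sits inside the cube of its parent. Hence an edge of the skeleton lies entirely inside $C$ unless it is a connecting edge at one of the four corners of the tetrahedron $\Delta_C$ enclosed by $C$.

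Each corner of $\Delta_C$ is the endpoint of at most one such connecting edge, as pointed out in Example \ref{ex: intermediate region}: from each interior cube there is exactly one connecting line to the outside, or none at the original corners. Moreover, we chose the cubes so that edges meet the faces transversally in interior points of the faces. Since the edges are straight segments, each connecting edge meets $\partial C$ in exactly one point. Therefore $\partial C\cap K$ is a finite set of at most four points, which proves part 1. For vertex cubes the same count applies: a vertex cube contains a single thickened vertex, which has at most one outgoing edge leaving the vertex cube and at most three edges inside the tetrahedron. Here I would simply check against the figure that the count stays at most four.

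For part 2, I will argue that $K\cap C$ is a disjoint union of arcs of the circle $K$. This holds whenever $K\not\subset C$; if $K\subset C$, then $K\cap C$ is connected and there is nothing to prove. The endpoints of these arcs are exactly the points of $\partial C\cap K$, and by transversality $K$ genuinely crosses $\partial C$ at each of them. Each arc has two endpoints, all of which lie on $\partial C$, so the number of arc components is $\tfrac12|\partial C\cap K|\le 2$. An isolated tangency point cannot occur, because all intersections are transverse crossings. Components of $K\cap C$ that never touch $\partial C$ would have to be all of $K$, which is the case already handled.

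The main obstacle is the first step: justifying carefully that nothing other than the (at most four) corner connecting edges crosses $\partial C$. In particular I must rule out that an edge of an ancestor tetrahedron, or a connecting edge between two other sibling cubes, passes through $C$. For this I plan to use the nesting, together with the fact that $\varepsilon$ is small. Sibling tetrahedra, and the segments joining them, lie near the edges of the parent tetrahedron. The segment joining two siblings runs along an edge of the parent away from the other siblings' cubes, so, after shrinking $\varepsilon$ if necessary, it misses every other cube of that stage.
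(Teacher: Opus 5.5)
Your proposal is correct and follows the same route as the paper. At most one outgoing connecting edge per corner of the enclosed tetrahedron gives at most four transverse crossing points, and these pair into at most two entry--exit arcs of $K\cap C$. Your extra care in ruling out other edges crossing $\partial C$ (via nesting and small $\varepsilon$), which the paper simply asserts ``by construction'', is a welcome addition. For vertex cubes you can skip the figure check: $K$ has degree $2$ at the vertex, so it meets the vertex cube's boundary in exactly two points.
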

\begin{proof}
    By construction, to cross $\partial C$, the knot must go through one of the dashed lines towards the outside. However, per interior cube there is at most one way out, as each pyramid connects to at most one other pyramid at each of its vertices. As there are four vertices, this number is at most four. This proves $1$.

    If $C$ is the outer surrounding cube this is obvious as the knot is connected. For other cubes, if the knot visits the given cube then, it must go out of it. As we have establish to go in or out one must visit one of the four points on $\partial C$. Hence, there can be at most two pairings, of these four vertices, as entry-exit pairs. This proves $2$. 
\end{proof}

\subsection{The tree for an intermediate region}

In this subsection we construct a trivalent tree associated to each intermediate region $\cC$. 

\begin{lemma}
    \label{lemma: existence of intermediate map}
Let $\cC$ be an intermediate region. There exists a trivalent tree $\cT_{\cC}$ and a continuous map $f_{\cC}: \cC \longrightarrow \cT_{\cC}$ that satisfies the following properties:
\begin{enumerate}
    \item For each vertex $x$ that is not a leaf, the preimage $f^{-1}(x)$ is a double bubble transversal to $K\cap \cC$.

    \item For each point $x$ that is not a vertex, the preimage $f^{-1}(x)$ is a sphere transversal to $K$.

    \item The tree $\cT_{\cC}$ has a single root corresponding to its surrounding cube and one leaf for each one of its interior cubes. 

    \item Let $x_1,x_2 \in \cT$ two points that are \textit{not} trivalent vertices. If they are in the same edge, then 
    \begin{equation*}
        w(f^{-1}(x_1), K) = w(f^{-1}(x_2), K).
    \end{equation*}
\end{enumerate}
\end{lemma}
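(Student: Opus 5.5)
We will build $f_{\cC}$ by growing the interior cubes $C_1,\dots,C_k$ ($k\le 4$) in parallel and merging them pairwise until we reach a single prism that then grows into the surrounding cube $C$. The plan is to use Proposition \ref{prop: PP/LP existance of phi} repeatedly, so that every level set is an axis-parallel rectangular prism (or, at merging times, a double bubble formed by two such prisms sharing a face). The tree $\cT_{\cC}$ will be a caterpillar: leaves $\ell_1,\dots,\ell_k$ for the $C_i$, internal trivalent vertices $v_1,\dots,v_{k-1}$, and a root edge ending at the point corresponding to $\partial C$. If $k=1$ it is just a segment, and Proposition \ref{prop: PP/LP existance of phi} applied to $C_1\subset C$ gives the whole map.

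\textbf{Step 1 (choosing the merging prisms).} Since the four sub-tetrahedra sit at the corners of the big tetrahedron, and the cubes are small compared with $C$, we choose two axis-parallel prisms $P_1\supset C_1$ and $P_2\supset C_2$. They are to share exactly one face $F$, lie inside $C$, and avoid the other $C_j$. We also require that $\partial P_1\cup\partial P_2$ meets $K$ only transversally, at interior points of faces, and away from the common boundary circle $\partial F$. This is possible because the edges of the $1$-skeleton take only finitely many directions, none parallel to the coordinate planes. The avoidance conditions are therefore finitely many open conditions, exactly as in the argument placing the cubes transversally. In particular we choose $F$ to meet each straight connecting segment in at most one point. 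Then $P_1\cup P_2$ is a prism-shaped region and $\partial P_1,\partial P_2,F$ form a double bubble transverse to $K$. We apply Proposition \ref{prop: PP/LP existance of phi} to $C_i\subset P_i$ to parametrize the edges $[\ell_i,v_1]$. We then enclose $P_1\cup P_2$ in a slightly larger prism $Q$ and repeat with $C_3$ (and $C_4$), using a prism $P_3\supset C_3$ sharing a face with $Q$. Finally we apply Proposition \ref{prop: PP/LP existance of phi} to $Q_{\mathrm{last}}\subset C$. The regions $\overline{Q-(P_1\cup P_2)}$ are handled by the same proposition. The pieces are glued along common boundaries, and the gluing lemma gives continuity of $f_{\cC}$. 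Properties 1 and 3 then hold by construction.

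\textbf{Step 2 (transversality and constant weight on edges).} This is the main obstacle. Along an edge, the level sets are prisms $\Phi^{-1}(u)$ interpolating between two prisms $X\subset Y$. We need every such prism to meet $K$ transversally, and the number of intersection points to stay constant. Inside $\overline{Y-X}$ the knot consists of finitely many straight segments, each running from $\partial X$ to $\partial Y$; there are no thickened vertices here, since these lie inside vertex cubes, which are leaves. So the first thing we will try is to check that the explicit $\Phi$ from the appendix is monotone along each such segment: it should be strictly increasing along any segment leaving $X$ and reaching $\partial Y$, which holds if each face moves monotonically outward. If so, each segment meets each level set in exactly one point, transversally, unless the intersection lands on an edge of the prism. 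To exclude the latter we perturb the face-velocities of $\Phi$, which is again finitely many linear conditions. The weight on an edge then equals the number of segments crossing the collar, a constant, which is property 4 (and property 2). If the appendix map is not monotone on some segment, we will instead modify $\Phi$ by moving the six faces one at a time. The level sets then sweep as parallel planes, and monotonicity along non-parallel straight segments is automatic.
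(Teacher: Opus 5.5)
Your construction is the paper's: a caterpillar of pairwise merges, Proposition~\ref{prop: PP/LP existance of phi} applied to each collar, and the gluing lemma. The collars are the two halves of each merging prism on either side of the membrane, and finally the last prism inside the surrounding cube. Your monotonicity check in Step~2 succeeds without any fallback. The appendix map moves each face of $X$ linearly outward to the corresponding face of $Y$, so the level sets bound nested convex boxes. A straight segment with an endpoint inside $X$ that leaves $Y$ therefore meets each level set exactly once. This is the paper's convexity argument for property~4.

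The gap is the sentence in Step~2 asserting that inside each collar $\overline{Y-X}$ the knot consists only of segments running from $\partial X$ to $\partial Y$. Nothing you impose in Step~1 gives this. There you only ask that the prisms avoid the other cubes, that they meet $K$ generically, and that $F$ cross each segment at most once.

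For instance, in your second merge an edge from $C_1$ to $C_4$, or from $C_1$ out of $C$, may leave $Q$ through the common face $Q\cap P_3$ and run across $P_3$. The level sets growing from $C_3$ to $P_3$ then miss that piece at first. At some $u^\ast$ they touch it non-transversally, at an edge or vertex of the box, and afterwards they meet it in two points. So the weight jumps by $2$ along $[\ell_3,v_2]$, and properties~2 and~4 both fail.

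This is a positional requirement, not a transversality one, so ``finitely many open conditions'' cannot produce it. The paper secures it in two ways. It shrinks the interior cubes until every pair is separated by an axis-parallel plane in each direction, so that each merging prism meets only edges incident to the cubes it surrounds. It also chooses each membrane to cut only the edges joining the two pieces being merged. You need to add these conditions to Step~1 and check that they can be met.

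Separately, your plan to perturb the face velocities so that no level prism meets $K$ on one of its edges cannot work. A segment that starts in the prismoid over one face of $X$ and ends in the prismoid over a different face must cross their common quadrilateral. At that crossing it lies on an edge of some level prism, whatever the speeds. This is harmless, because transversality in the paper is topological. A segment through an interior point of a convex body crosses its boundary in a locally flat way even at an edge, so you can simply drop that step.
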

\begin{proof} 
    For concreteness, let us call the interior cubes $A, B, C$ and $D$ and the surrounding cube $E$. The technique is extremely simple: we will merge the cubes progressively. The process to do it will be to collocate the two chosen prisms to be merged into a third one and then invoke proposition \ref{prop: PP/LP existance of phi}.

   We begin with an observation. We can assume that our initial configuration, that then gets repeated at each iteration, has all of its interior cubes small enough so that no plane parallel to the faces of the cubes intersects two of the interior cubes simultaneously. In particular, in between any two cubes there is a separating plane in every direction parallel to the faces of the cube
   
   This is because we can pick the scalar factor of the Sierpinski tetrahedron to be as small as needed. Hence, the cube around it becomes as small as required to give the above separations. We only need this to work for one iteration. For all subsequent iterations we use the same configuration, under the appropriate scaling. 
   
   This allow us to guarantee that all prisms we will construct do not intersect more cubes than those they surround. This also implies that all edges they intersect have one of their endpoints in one of the prisms they surround. Indeed, if both endpoints were on the other side of the separating plane, by convexity of half-spaces, all the edge is.

    We will \say{merge} $A$ and $C$. To do this, we put them inside a common prism $R$, whose faces are parallel to the faces of the cubes.Then we choose the membrane between them in such a way that it only intersects the edge between $A$ and $C$ (this intersection is unavoidable). Then we repeat this construction with $R$ and $D$, and call the surrounding prism $Q$. We conclude by repeating this with $Q$ and $B$, and call the surrounding prism $U$. We can see this in figure \ref{fig: Merging}.

    \begin{figure}[H]
    \centering
    \begin{subfigure}{0.32\textwidth}
        \centering
        \includegraphics[scale = 0.2]{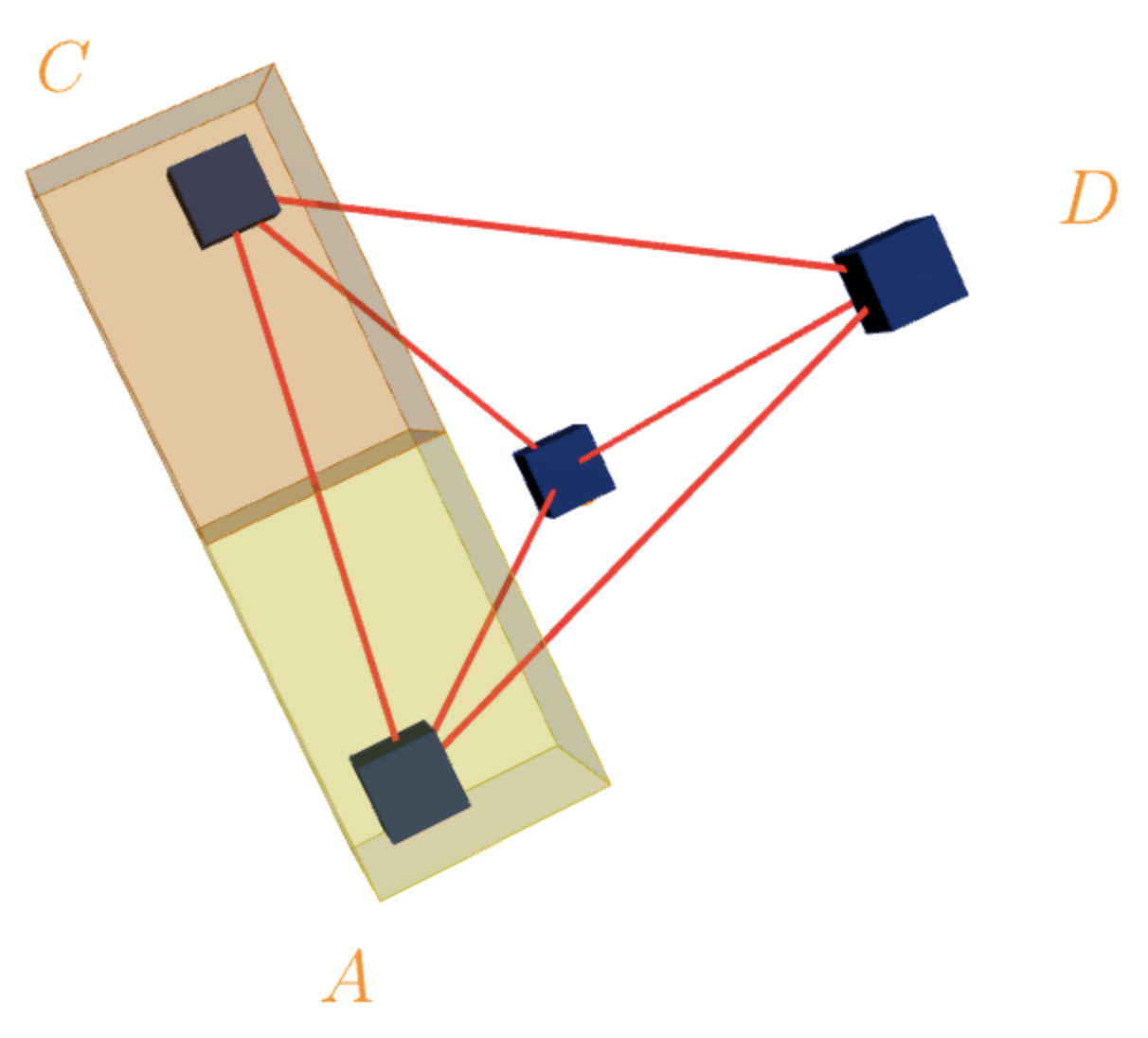}
        \caption{$A$ and $C$ merge.}
        \label{fig: ACmerge}
    \end{subfigure}
    \hfill
    \begin{subfigure}{0.32\textwidth}
        \centering
        \includegraphics[scale = 0.2]{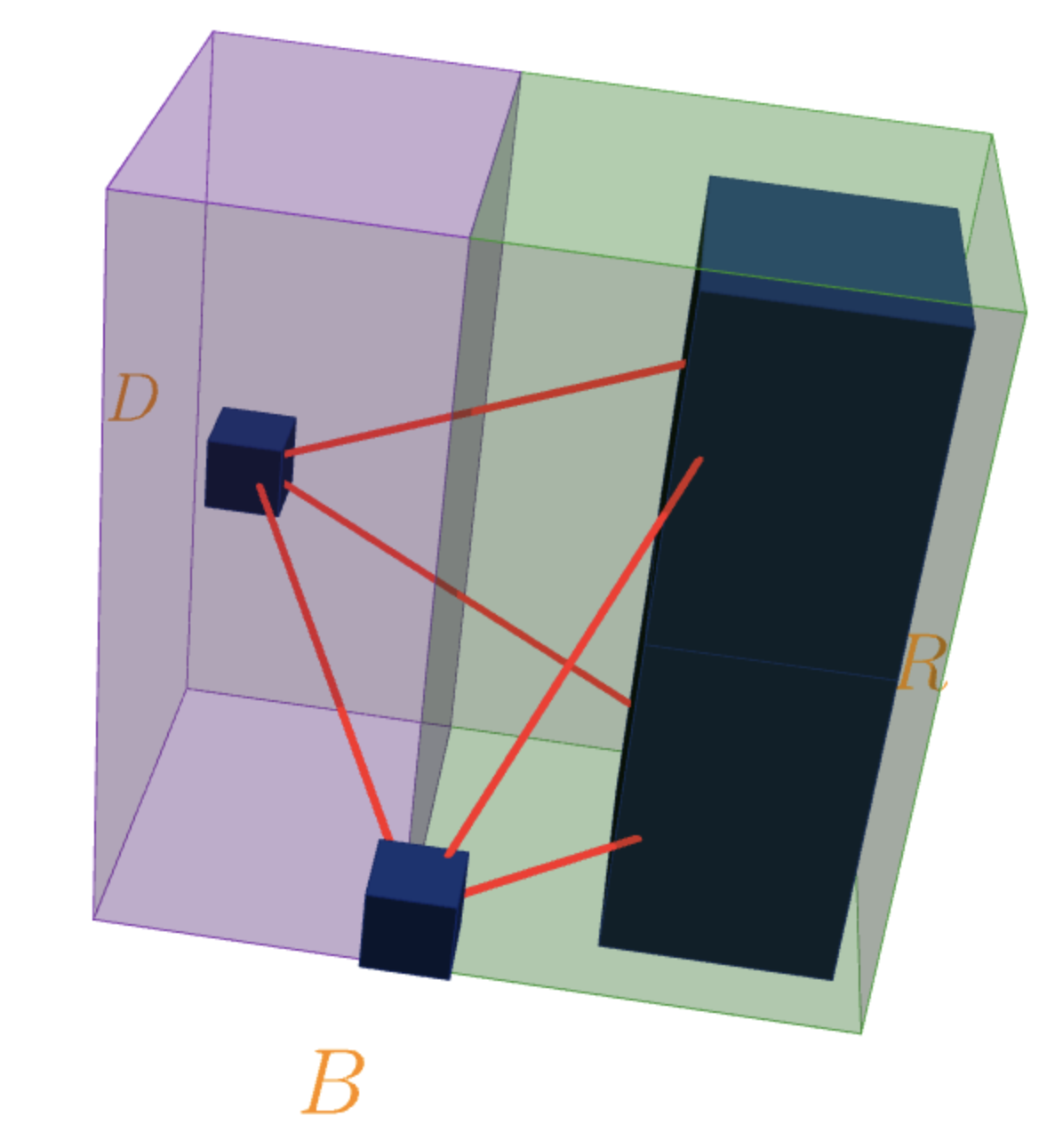}
        \caption{$R$ and $D$ merge.}
        \label{fig: ACDmerge}
    \end{subfigure}
    \hfill
    \begin{subfigure}{0.32\textwidth}
        \centering
        \includegraphics[scale = 0.2]{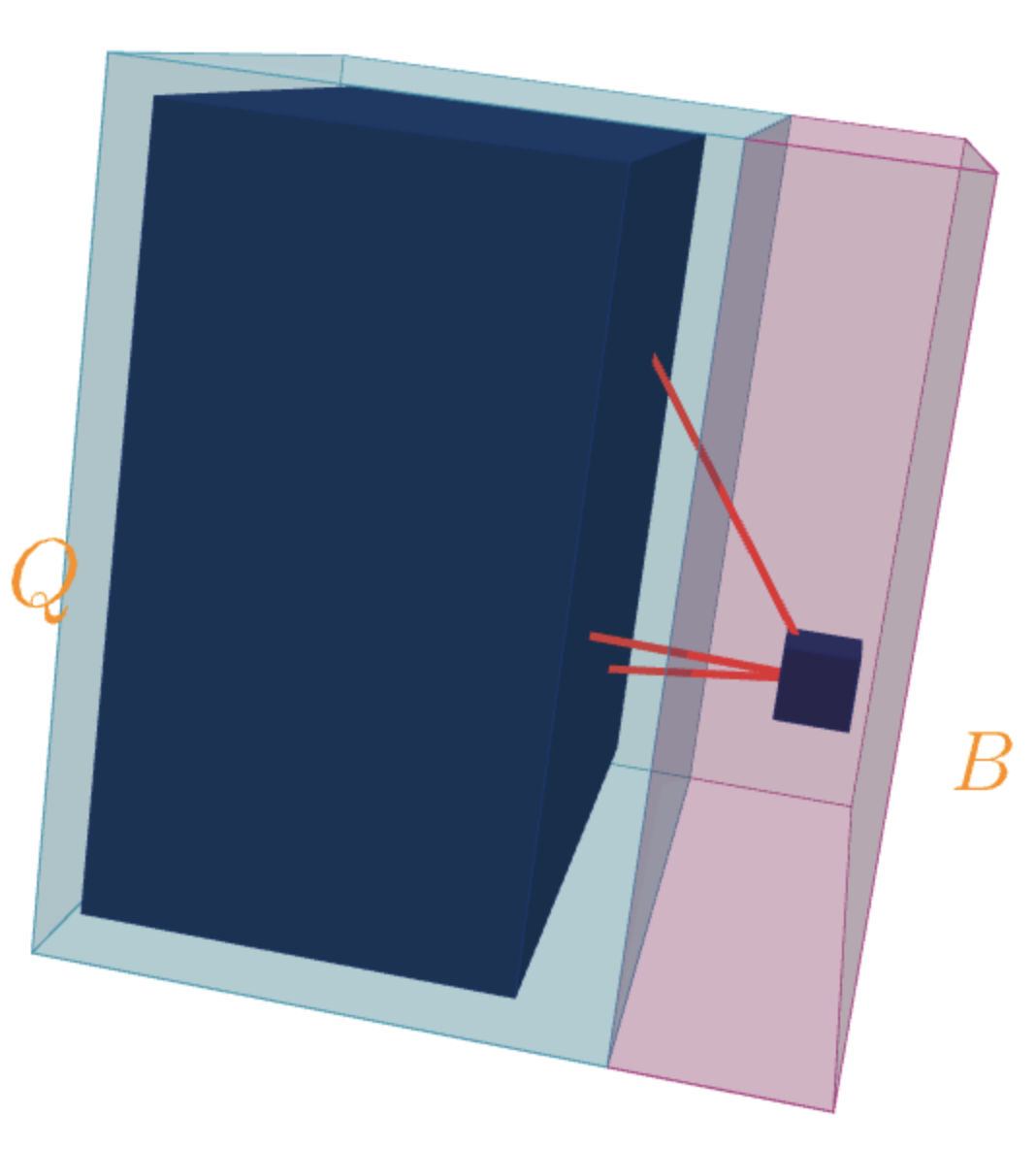}
        \caption{$Q$ and $B$ merge.}
        \label{fig: ABDCmerge}
    \end{subfigure}
    \caption{Process of progressive merging of the cubes.}
    \label{fig: Merging}
\end{figure}

    Once more, we emphasize this construction is possible because we can guarantee the edges of the merging prisms are far away from the faces of the other cubes that are not being merged. We can see this entire decomposition in figure \ref{fig: decomposition}.

    \begin{figure}[H]
        \centering
        \includegraphics[width=0.3\linewidth]{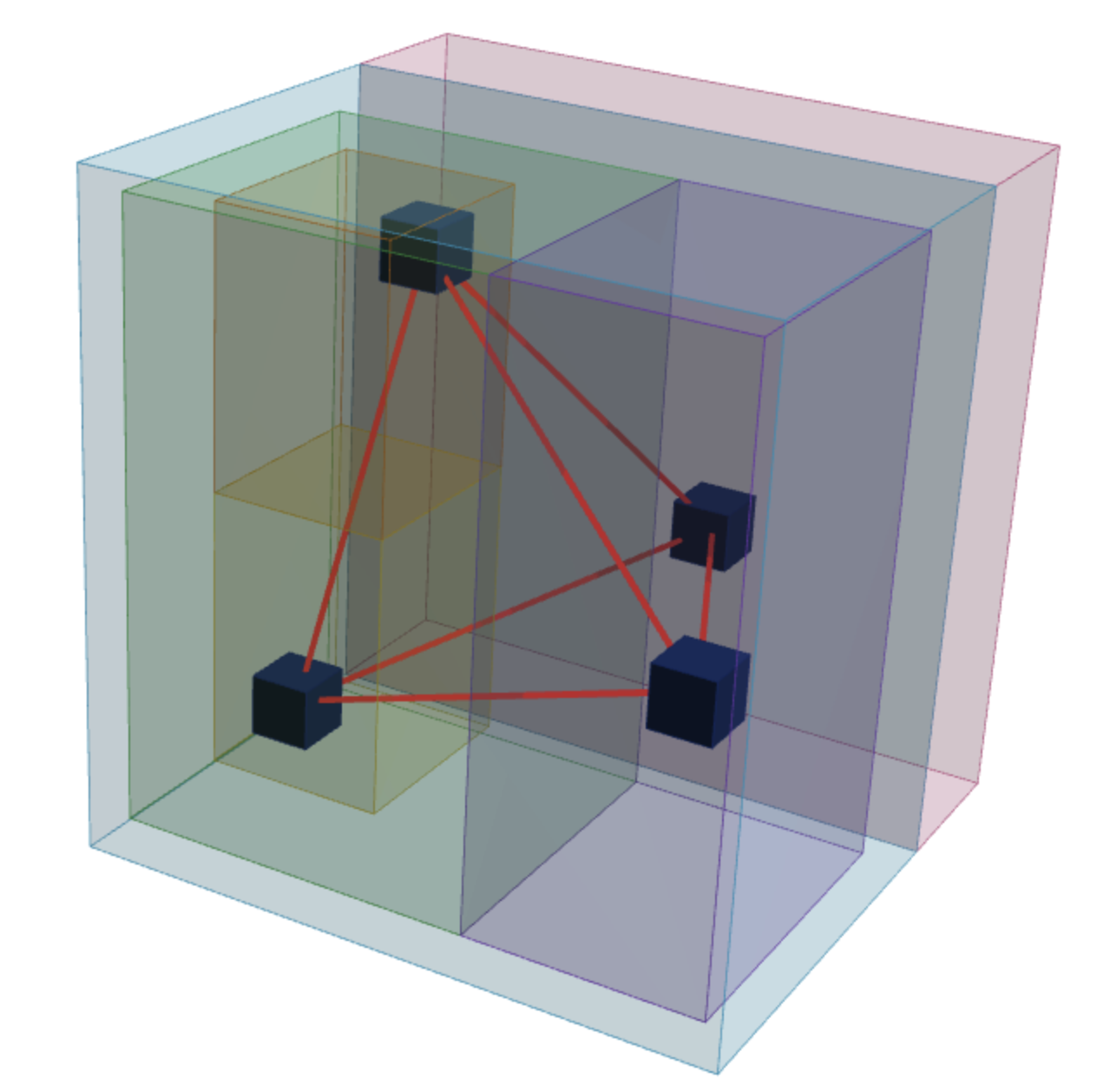}
        \caption{The decomposition of an intermediate region.}
        \label{fig: decomposition}
    \end{figure}
   
    Let us now discuss the construction of the map $f_{\cC}$. For concreteness, let us give the list of mergings we have done.
    \begin{enumerate}
        \item $A$ and $C$ merge to $R$,
        \item $R$ and $D$ merge to $Q$,
        \item $Q$ and $B$ merge to $U$,
    \end{enumerate}
    Notice that $U$ is a prism parallel to the surrounding cube $E$ and with all the cubes $A, B, C, D$ in its interior.
    
    Pick any of the above steps, where two prisms are being merged. Let us call them $X$ and $Y$. Let us call $Z$ the rectangular prism that is being used to merge them (for example, if $X = R$ and $Y = D$, then $Z = Q$). Its faces are parallel to those of $X$ and $Y$ and that strictly contains them in its interior. Then divide $Z$ by a parallel subface (i.e. the \say{membrane}), disjoint from $X$ and $Y$, into two smaller rectangles $Z_X$ and $Z_Y$ such that $X\subset Z_X$ and $Y\subset Z_Y$. Notice that we can always pick the membrane so that it only intersects the edges between $X$ and $Y$ but no others. 
    
    Define 
    \begin{align*}
        \mathcal{S}_{X, Z_X} &:= \overline{Z_X - X},\\ \mathcal{S}_{Y, Z_Y} &:= \overline{Z_Y - Y}.
    \end{align*}  
    By proposition \ref{prop: PP/LP existance of phi} there exist maps
    \begin{align*}
        \Phi_X: \mathcal{S}_{X, Z_X}\longrightarrow [a, b],\\
        \Phi_Y: \mathcal{S}_{Y, Z_Y}\longrightarrow [c, d],
    \end{align*}
    such that the level sets are rectangular prisms with faces parallels to those of $X$ and $Y$. We now identify $b$ and $d$ to create a path $P$ whose vertices are $a, b=d, c$. We define
    \begin{equation*}
        \Phi_{XY}:\mathcal{S}_{X, Z_X}\cup \mathcal{S}_{Y, Z_Y}\longrightarrow P,
    \end{equation*}
    as $\Phi_X$ on $\mathcal{S}_{X, Z_X}$ and as $\Phi_Y$ on $\mathcal{S}_{Y, Z_Y}$. The gluing lemma implies this is a well-defined continuous map as $\Phi_X$ and $\Phi_Y$ coincide on the membrane (whose image is $b = d$). Notice that
    \begin{equation*}
        \mathcal{S}_{X, Y} := \overline{Z - X - Y} = \mathcal{S}_{X, Z_X}\cup \mathcal{S}_{Y, Z_Y}.
    \end{equation*}
    Consequently, we have built a map 
    \begin{equation*}
        \Phi_{XY}:\mathcal{S}_{X, Y}\longrightarrow P,
    \end{equation*}
    By construction, the following holds
    \begin{align*}
        \Phi^{-1}_{XY}(a) &= X,\\
        \Phi^{-1}_{XY}(c) &= Y,\\
        \Phi^{-1}_{XY}(b = d) &= Z_X \cup Z_Y.
    \end{align*}
    Notice that this last preimage is a double bubble, as it is $Z$ together with the membrane. 

    Let us construct a map corresponding to $U$ and $E$. By proposition \ref{prop: PP/LP existance of phi}, there exists
    \begin{equation*}
        \Phi_{U, E}:\mathcal{S}_{U, E}\longrightarrow [e, f],
    \end{equation*}
     such that its level sets are prisms with faces parallel to those of $U$ and $E$. In here we have defined
    \begin{equation*}
        \mathcal{S}_{U, E} = \overline{E - U}.
    \end{equation*}

    We now have four maps $\Phi_{A, C}$, $\Phi_{R, D}$, $\Phi_{Q, B}$ and $\Phi_{U, E}$. Each goes to a different path. Notice that we can normalize notation as follows:
    \begin{align*}
        \Phi_{A, C}:& \mathcal{S}_{A, C} \longrightarrow [v_A, v_C],\\
        \Phi_{R, D}:& \mathcal{S}_{R, D} \longrightarrow [v_R, v_D],\\
        \Phi_{Q, B}:& \mathcal{S}_{Q, B} \longrightarrow [v_Q, v_B],\\
        \Phi_{U, E}:& \mathcal{S}_{U, E} \longrightarrow [v_U, v_E].
    \end{align*}
     For the first three maps, the edge on the codomain has an interior vertex whose preimage is a double bubble. Respectively, we call this interior vertex $w_R, w_Q, w_U$. The corresponding double bubble is the one whose exterior prism is $R, Q$ and $U$, respectively. 

    We intend to use the gluing map to construct out of these four maps a map on the intermediate region $\cC$. We have 
    \begin{equation}
    \label{eq: closed decomposition}
        \cC = \overline{E - (A \cup B \cup C \cup D)} = \mathcal{S}_{A, C} \cup \mathcal{S}_{R, D} \cup \mathcal{S}_{Q, B} \cup \mathcal{S}_{U, E}.
    \end{equation}
    Consequently, if we define appropriate maps that coincide in the intersections, the gluing lemma concludes the task. The target codomain will be a trivalent tree $\cT_{\cC}$. We construct it as an identification space, constructed from the edges $[v_A, v_C]$, $[v_R, v_D]$, $[v_Q, v_B]$ and $[v_U, v_E]$, by the identifications
    \begin{equation*}
        w_R \sim v_R, \; w_Q \sim v_Q, \; w_U \sim v_U.
    \end{equation*}
    Notice that this is indeed a trivalent tree. Its leaves are $v_A, v_B, v_C$, $v_D$ and $v_E$ and its trivalent vertices are $w_R = v_R$, $w_Q = v_Q$ and $w_U = v_U$. This verifies property $3$ about $\cT_{\cC}$.

    The nonempty double intersections among the closed sets of equation \eqref{eq: closed decomposition} are
    \begin{align*}
        \mathcal{S}_{A, C} \cap \mathcal{S}_{R, D} &= R,\\
        \mathcal{S}_{R, D} \cap \mathcal{S}_{Q, B} &= Q,\\
        \mathcal{S}_{Q, B} \cap \mathcal{S}_{U, E} &= U.
    \end{align*}
    Notice that the membranes, other than their edges, are not part of the intersections. The corresponding maps coincide in these intersections because their images are the identified vertices. Hence, by the gluing lemma, there is a continuous map 
    \begin{equation*}
        f_{\cC}: \cC \longrightarrow \cT_{\cC}
    \end{equation*}

    We claim that this map satisfies all the desired properties $1, 2$ and $4$. The trivalent vertices $v_R, v_Q, v_U$ satisfy
    \begin{align*}
        f_{\cC}^{-1}(v_R) &= \Phi_{A, C}^{-1}(v_R) = R_A \cup R_C,\\
        f_{\cC}^{-1}(v_Q) &= \Phi_{R, D}^{-1}(v_Q) = Q_R \cup Q_D,\\
        f_{\cC}^{-1}(v_U) &= \Phi_{Q, B}^{-1}(v_U) = U_Q \cup U_B,
    \end{align*}
    each of which is a double bubble. Notice this double bubble never intersects vertices, as there are none in $\cC$. Furthermore, by construction the intersections with the edges is transversal. This proves property $1$. (Notice that $R_A$, $R_C$, etc. are what we called $Z_X$ and $Z_Y$ when we constructed these maps above.)
    
    Property $2$ follows from proposition \ref{prop: PP/LP existance of phi},
    because these preimages are precisely the preimages of interpolations. These preimages are prisms whose faces are parallel to those of the cubes. As we know, by construction, the edges of the $1$-skeleton are never parallel to these faces. Furthermore, also by construction, if one of these spheres intersects an edge, then it has a point of the edge inside of it. Therefore,  all intersections are transversal (as opposed to tangent). 
    
    Finally, to prove property $4$ we recall proposition \ref{prop: bound on cubes}. 
    We already proved that for each interior points $x$, $f^{-1}_{\cC}(x)$ is a rectangular prism with faces parallel to those of the cubes. This also holds for the leaves themselves, where we recover$\partial A, \partial B, \partial C$ and $\partial D$ as preimages. However, we know more: this prism is a level set of one of the four glued maps. Hence, in order to prove this property, we will trace these maps in order.

    Recall that $\cC \cap K$ are the edges between the cubes and those segments from $A, B, C$ and $D$ that go out of $E$. Because the intersection is transversal, each of these segments intersects $f^{-1}_{\cC}(x)$  in a finite number of points. Suppose that $f^{-1}_{\cC}(x)$ interpolates between $X$ and $Z_X$. We agree to take $Z_X = E$ if $X = U$. Notice that for every $x$, interior to the edges, we can find an $X$ that satisfies this. 

    Let $p\in f^{-1}_{\cC}(x)\cap K$. Consequently,
     \begin{equation*}
         p\in K \cap \mathcal{S}_{X, Z_X}.
     \end{equation*}
    This intersection consists of closed intervals. By construction, all of these intervals have at least one of their endpoints on $\partial X$. Thus $p$ is connected to $\partial X$ through one of these edges. Hence, we have a correspondence
    \begin{equation*}
       f^{-1}_{\cC}(x)\cap K \longrightarrow \partial X\cap K.
    \end{equation*}
    We claim this correspondence is bijective. If $p_1$ and $p_2$ correspond to the same point, then $p_1p_2$ would be part of the given segment. However, $f^{-1}_{\cC}(x)$ is a rectangular prism with one of the endpoints of this segment inside. As this segment is transversal to the faces of the prism,  they can only intersect it in at most one point. Hence $p_1 = p_2$. 
    It is surjective for a similar reason. Given a point $p\in K\cap \partial X$, it must be part of a unique edge of $K$. The other endpoint of this edge lies on a vertex within another cube. By construction $Z_X$ only contains the cubes inside of $X$ (possibly $X$ itself). Hence, this edge has a point inside $f^{-1}_{\cC}(x)$ and one outside. It must therefore intersect it at some point. 
     
     The count of these points is, by definition, the weight of the spheres. Hence, we have proven is
     \begin{equation}
     \label{eqn: constancy of weight}
         w(f^{-1}(x), K) = w(\partial X, K).
     \end{equation}
    If $x_1$ and $x_2$ belong to the same edge, but are not trivalent vertices, they were not identified in the process of creating $\cT$ with another point. Hence, ther exists $X, Y$ such that 
    \begin{equation*}
        x_1, x_2 \in [v_X, v_Y]
    \end{equation*}
    In the case where there is an identification point between $v_X$ and $v_Y$, we must have $x_1$ and $x_2$ are on the same side of it. Otherwise they would be in different edge. This implies, by construction, that, $f^{-1}(x_1)$ and $f^{-1}(x_2)$ are interpolations between $X$ and $Z_X$ for the same $X$. Equation \eqref{eqn: constancy of weight} implies
    \begin{equation*}
        w(f^{-1}(x_1), K) = w(\partial X, K) = w(f^{-1}(x_2), K),
    \end{equation*}
    as desired. This concludes the proof. 
\end{proof}

\begin{proposition}
    \label{prop: wieght bound 6}
Let $\cC$ be an intermediate region and $f_{\cC}:\cC\longrightarrow \cT$ the map constructed in lemma \ref{lemma: existence of intermediate map}. Then for all $x\in\cT$ that is not a trivalent vertex,
\begin{equation*}
    w(f^{-1}(x), K) \le 6.
\end{equation*}
\end{proposition}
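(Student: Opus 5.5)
By property 4 of lemma \ref{lemma: existence of intermediate map} and equation \eqref{eqn: constancy of weight}, the weight of $f_{\cC}^{-1}(x)$ is constant along each edge of $\cT_{\cC}$. It equals $w(\partial X, K)$, where $X$ is the prism at the lower end of the edge containing $x$. So it suffices to bound $w(\partial X,K)$ for the finitely many prisms appearing as lower ends of edges: the interior cubes $A,B,C,D$ and the merging prisms $R,Q,U$. The leaf $v_E$ and the edge $[v_U,v_E]$ are handled by $U$, whose weight we will compare with $E$.

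For the interior cubes, proposition \ref{prop: bound on cubes}(1) gives $w(\partial A,K)\le 4$ at once, and likewise for $B,C,D$. For a merged prism $Z\in\{R,Q,U\}$ obtained from $X$ and $Y$, I would count the intersection points of $\partial Z$ with $K$ as in the bijection argument of lemma \ref{lemma: existence of intermediate map}. Each point of $K\cap\partial Z$ lies on a segment with one endpoint in a cube inside $Z$ and the other endpoint outside $Z$. Segments joining two cubes that are both inside $Z$ are not counted, since $Z$ contains both endpoints and, by convexity, the whole segment. Hence
\begin{equation*}
w(\partial Z,K) = \sum_{\text{cubes } W\subset Z} w(\partial W,K) \;-\; 2\,\#\{\text{edges of } K \text{ joining two cubes inside } Z\}.
\end{equation*}

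The key step is to bound this using the fact that $K$ restricted to the tetrahedron of $E$ is a subgraph of the tetrahedron's $1$-skeleton. Each interior cube $W$ has at most $3$ edges to the other interior cubes, since the tetrahedron is complete on four vertices, and at most one exit edge leaving $E$. The knot uses exactly two edges at each visited vertex (or vertex-cube path), so $w(\partial W,K)\in\{0,2,4\}$. I would try to show that every subset of $j$ interior cubes has boundary weight at most $6$:
\begin{enumerate}
\item For $j=1$ the bound $4$ was obtained above.
\item For $j=4$, the prism $U$ sees only the exit edges, so its weight is $\le 4$ by proposition \ref{prop: bound on cubes}.
\item For $j=2$ and $j=3$, the crossing edges are edges from the chosen set $S$ to its complement, among the internal tetrahedron edges, plus the exit edges of the cubes in $S$.
\end{enumerate}
In the case $j=2$ there are at most $4$ internal cut edges. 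However, the two strands of $K$ through each cube use at most $4$ edge-ends per cube, for a total of $8$ ends. If no $K$-edge joins the two cubes, all $8$ ends could a priori cross, so a finer argument is needed. Here I would invoke proposition \ref{prop: bound on cubes}(2): $K\cap E$ has at most two arcs, each of which enters and leaves $E$ through exit edges, and there are at most $4$ exit edges in total.

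I expect this case analysis for two- and three-cube prisms to be the main obstacle, because naive counting gives $8$. The approach I would try first is a parity and degree count. Let $e$ be the number of exit edges from $S$ and $i$ the number of $K$-edges inside $S$. The number of crossing points is then $2|S| \cdot(\text{strands}) - 2i$, which I would rewrite as $e$ plus the internal edges from $S$ to its complement. Since the internal edges of $K$ form at most two paths on four vertices, their total number is at most $4-\#\text{arcs}$. Combining this with $e\le 4$ should cap the total at $6$. If this fails, the fallback is to use the freedom in the merge order $A,C\to R\to Q\to U$: choosing $A,C$ to be adjacent along $K$ forces the cancellation of two crossing points.
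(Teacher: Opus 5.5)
\textbf{Gap: the cases $|S|=2,3$ are never proved.} Your reduction is the same as the paper's. The weight is constant on each edge of $\mathcal{T}_{\mathcal{C}}$ and equals $w(\partial X,K)$ for $X\in\{A,B,C,D,R,Q,U\}$. Your formula $w(\partial Z,K)=\sum_{W\subset Z}w(\partial W,K)-2\#\{\text{$K$-edges inside } Z\}$ is exactly the paper's merging computation. But the only cases with content are the two- and three-cube prisms $R$ and $Q$, and there you do not establish the bound. You assert that naive counting gives $8$. You then offer a sketch that ends in ``should cap the total at $6$''. It relies on an unjustified bound $4-\#\text{arcs}$ on internal edges and a crossing count $2|S|\cdot(\text{strands})-2i$ that is not valid when cubes carry different numbers of strands. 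Your fallback changes the merge order of a map the statement takes as already constructed. As written, the proof is incomplete exactly where the statement needs it.

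\textbf{The missing step.} It is elementary, and it is already contained in your item~3. By the bijection in Lemma~\ref{lemma: existence of intermediate map}, each point of $K\cap\partial Z$ comes from a distinct $K$-edge with exactly one endpoint in a cube inside $Z$. The only candidates are:
\begin{itemize}
\item the $|S|(4-|S|)$ tetrahedron edges from the set $S$ of merged cubes to its complement;
\item at most one exit edge per cube of $S$.
\end{itemize}
Write $\delta_V,\delta_{VW}\in\{0,1\}$ for whether $K$ uses the exit at $V$ or the edge $VW$. With the lemma's merge order you get
\begin{align*}
w(\partial R,K)&=\delta_A+\delta_C+\delta_{AB}+\delta_{AD}+\delta_{CB}+\delta_{CD},\\
w(\partial Q,K)&=\delta_A+\delta_C+\delta_D+\delta_{AB}+\delta_{CB}+\delta_{DB}.
\end{align*}
Each is a sum of six indicators, hence at most $6$. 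In general the count is $|S|(4-|S|)+|S|$, which equals $4,6,6,4$ for $|S|=1,2,3,4$. This is precisely the paper's argument.

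\textbf{Where the ``$8$'' goes wrong.} The ends at $A$ and at $C$ along the edge $AC$ never contribute. If $AC$ is unused, each of $A,C$ has at most three usable edges. If $AC$ is used, those two ends cancel. So you need no parity argument, no appeal to Proposition~\ref{prop: bound on cubes}(2), and no choice of merge order.
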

\begin{proof}
For an interior cube $V$ (i.e. $V$ is one of $A, B, C$ or $D$) define
\begin{equation*}
    \delta_V = 
\begin{cases}
  1, & \text{if } K \text{ enters or exits through } V,\\
  0,  & \text{otherwise }.
\end{cases}
\end{equation*}
Similarly, for two interior cubes $V$ and $W$ define
    \begin{equation*}
    \delta_{VW} = 
\begin{cases}
  1, & \text{if } K\text{ uses } VW,\\
  0,  & \text{otherwise }.
\end{cases}
\end{equation*}
Then we have
\begin{equation*}
    w(A, K) = \delta_A + \delta_{AB} + \delta_{AC} + \delta_{AD}. 
\end{equation*}
Similar for the other vertices.
When we merge two cubes, say $A$ and $B$, in a third prism $R$ the weight calculation is
\begin{equation*}
    w(R, K) = w(A, K) + w(B, K) - 2\delta_{AB} = \delta_A + \delta_B + \delta_{AC} + \delta_{AD} + \delta_{BC} + \delta_{BD}.
\end{equation*}
Similarly, merging now $R$ and $C$ into a prism $Q$, we get
\begin{equation*}
    w(Q, K) = \delta_A + \delta_B + +\delta_C + \delta_{AD} + \delta_{BD} + \delta_{CD}. 
\end{equation*}
And finally, of course, for the surrounding region
\begin{equation*}
    \delta(E, K) = \delta_A + \delta_B + \delta_C + \delta_D.
\end{equation*}
As all of these calculations at most sum six terms, each of which is a $0$ or a $1$, the bound of $6$ follows. 
\end{proof}

\begin{corollary}
\label{cor: miss gives 4}
    Let $\cC$ be an intermediate region and $f_{\cC}:\cC\longrightarrow \cT$ the map constructed in lemma \ref{lemma: existence of intermediate map}. 
    \begin{enumerate}
        \item Suppose there is one interior cube with one of its adjacent edges in $\cC$ not used by the knot, then for all $x\in \cT_{\cC}$ that are not trivalent vertices we have
    \begin{equation*}
        w(f^{-1}_{\cC}(x), K) \le 4.
    \end{equation*}

        \item On the other hand, if all edges are used in $\cC$, then for some $x\in\cT_{\cC}$ that is not a trivalent vertex, we have
        \begin{equation*}
            w(f^{-1}_{\cC}(x), K) = 6.
        \end{equation*}
    \end{enumerate} 
\end{corollary}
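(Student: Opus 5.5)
The plan is to work entirely with the bookkeeping from the proof of Proposition~\ref{prop: wieght bound 6}, combined with two facts: the constancy of weight along edges, given by property~4 of Lemma~\ref{lemma: existence of intermediate map} and equation~\eqref{eqn: constancy of weight}, and a parity observation. By that constancy it suffices to check the finitely many prisms $A,B,C,D,R,Q,U$ and $E$. Their weights are the sums of $0/1$ indicators $\delta_V$ and $\delta_{VW}$ computed in Proposition~\ref{prop: wieght bound 6}. Part~2 will be immediate and part~1 will be a parity argument.

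The parity input is this. Every prism $P$ in the construction is a polyhedral sphere transverse to the closed curve $K$ (Lemma~\ref{lemma: existence of intermediate map}, property~2). Its complement has two components, so $K$ crosses it an even number of times, and $w(P,K)$ is even. Applying this to an interior cube $V$ gives the relation
\begin{equation*}
\delta_V+\sum_{W\neq V}\delta_{VW}\equiv 0 \pmod 2 ,
\end{equation*}
and it also shows that every merged prism has even weight.

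For part~2, assume every edge of $\cC$ is used, meaning every $\delta_{VW}=1$, and every available exit is used. Then the six-term expression for the first merged prism $R$ has all its terms equal to $1$, so $w(R,K)=6$. This holds on the whole open edge of $\cT_{\cC}$ whose points are interpolations between $R$ and $Z_R$. If the labelling forces some $\delta_V=0$, for instance at a corner cube with no exit, I would use the freedom in choosing which pair of cubes is merged first. I would pick a first pair that still yields six nonzero terms.

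For part~1, suppose some indicator adjacent to an interior cube vanishes. The four-term sums, namely $w(V,K)$ for the interior cubes and $w(E,K)$, are trivially at most $4$. So the work is to show that none of the six-term sums $w(R,K)$, $w(Q,K)$, $w(U,K)$ can equal $6$. By parity each such weight is even, so it is at most $4$ unless all six of its terms equal $1$. The argument therefore reduces to showing that ``all six terms equal $1$'' forces every indicator to be $1$, which contradicts the hypothesis. I would do this by propagating the parity relation for each cube. For example, if $w(R,K)=6$ with $R$ obtained from $A$ and $B$, then $\delta_A=\delta_{AC}=\delta_{AD}=1$. Parity of $w(A,K)$ then forces $\delta_{AB}=1$, and symmetrically for $B$. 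I would then feed this into the relations at $C$ and $D$, together with Proposition~\ref{prop: bound on cubes}, which says $C\cap K$ and $D\cap K$ have at most two components, so that the missing terms also equal $1$.

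I expect this propagation at the cubes that are not merged first to be the main obstacle. Parity alone gives only $\delta_C=\delta_{CD}$, which allows the degenerate pattern $\delta_C=\delta_{CD}=\delta_D=0$ while $w(R,K)=6$. My first attempt would be to exploit the freedom in the order of merging, which is ours to choose in Lemma~\ref{lemma: existence of intermediate map}. When some indicator vanishes, I would choose the first merged pair so that the unused edge is the one joining them, or so that the unused exit belongs to one of them. A zero then appears in every subsequent six-term sum, and parity pushes each such sum down to at most $4$. If a fixed ordering is required, I would instead try to rule out the degenerate pattern directly: that pattern forces the knot to enter or leave through both $A$ and $B$ while visiting $C$ in a way incompatible with the two-component bound of Proposition~\ref{prop: bound on cubes}.
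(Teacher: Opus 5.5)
Your framework matches the paper's: constancy of the weight along edges, evenness of every weight, and the observation that part~1 needs the merge order to be chosen after the knot is known. The paper makes this choice silently when it declares the cube of the hypothesis to be $A$ and the other end of the missing edge to be $D$. Part~2 is fine. Your own parity relation even shows the corner-cube worry is vacuous, since three used internal edges at $V$ force $\delta_V=1$.

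The gap is your ordering rule for an unused internal edge. The six-term sums count exactly the used exits of the merged cubes together with the used edges crossing the cuts $\{A,B\}\,|\,\{C,D\}$ (for $R$) and $\{A,B,C\}\,|\,\{D\}$ (for $Q$). Note that $w(U,K)=\delta_A+\delta_B+\delta_C+\delta_D$ has only four terms. An edge joining the first merged pair crosses neither cut, so its zero never enters $w(R,K)$ or $w(Q,K)$. Your claim that a zero then appears in every subsequent six-term sum is therefore false.

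Parity still saves $w(R,K)$: if $\delta_{AB}=0$ and $w(R,K)=6$, then $w(A,K)=3$, which is odd. It does not save $w(Q,K)$. Write $e_V$ for the exit of $V$ and take the two arcs $e_A\to A\to D\to B\to e_B$ and $e_C\to C\to D\to e_D$. Then $\delta_{AB}=\delta_{AC}=\delta_{BC}=0$ and every other indicator is $1$. Merging $A,B$, then $C$, then $D$ gives $w(R,K)=4$ but $w(Q,K)=6$.

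The correct rule, which is the paper's, is to make the unused edge join a cube of the first pair to the \emph{last} merged cube. Then its indicator ($\delta_{AD}$ in the paper's labels) is a summand of both six-term sums. Your rule for an unused exit (put its cube in the first pair) is correct, since $\delta_A$ also lies in both sums. Parity then brings both sums down to at most $4$.

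Your fallback for a fixed order cannot work either. Consider the pattern $\delta_A=\delta_B=1$, $\delta_C=\delta_D=\delta_{CD}=0$, with all other internal edges used. It is realized by the single arc $e_A\to A\to C\to B\to D\to A\to B\to e_B$. Inside $A$ and inside $B$ the two sub-arcs can run along opposite, hence disjoint, edges of a smallest tetrahedron. Every cube meets $K$ in at most two components, so nothing in Proposition~\ref{prop: bound on cubes} excludes this pattern. With $A,B$ merged first it gives $w(R,K)=6$ although $CD$ is unused. So with a fixed merge order part~1 is false, and the knot-dependent choice of order is essential rather than a convenience.
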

\begin{proof}
   By proposition \ref{prop: bound on cubes}, interior cubes must have either $0$, $2$ or $4$ points on their surfaces. Therefore, the initial weights are even. 
   
   The calculation of the successive weights implies they are also even, as you subtract \textit{twice} the number of shared edges. In particular, if one can guarantee the sum is not six, then it is at most four. 

   Only the two intermediate calculations, in the proof of proposition \ref{prop: wieght bound 6}, can be six. Concretely, with the notation of that proposition,
   \begin{align*}
       w(R, K) &= \delta_A + \delta_B + \delta_{AC} + \delta_{AD} + \delta_{BC} + \delta_{BD}\\
       w(Q, K) &= \delta_A + \delta_B + +\delta_C + \delta_{AD} + \delta_{BD} + \delta_{CD}
   \end{align*}
   Bot of this sums share $\delta_A$ and $\delta_{AD}$. Thus, declaring the cube in the hypothesis to be $A$ we see $\delta_A$ or $\delta_{AD} = 0$ (where, in the latter case, $D$ is the vertex on the other end of the missing edge.)

   We conclude these sums cannot be six, hence they are at most four. By property $4$ of lemma \ref{lemma: existence of intermediate map}, all the weights reduce the this count. This proves $1$.

   On the other hand, if such an intermediate region exists, then the count for $w(R, K) = 6$, because
   \begin{equation*}
       \delta_A = \delta_B = \delta_{AC} = \delta_{AD} = \delta_{BC} = \delta_{BD} = 1.
   \end{equation*}
    The result follows.
\end{proof}

\subsection{The vertex cubes}

The results of the previous subsection have taken care of almost all the regions where we need to construct the sphere decomposition. We are missing the exterior of the largest surrounding cube and the vertex cubes. We will return to the former in the proof of the main theorem.

\begin{lemma}
    \label{lem: vertex cubes map}
Let $C$ be a vertex cube around the vertex $V$. There exists a thickening of $V$ to a polyhedron $\mathcal{P}$ and a map $f_C: C \longrightarrow [0, 1]$ such that
\begin{enumerate}
    \item For each $0 < u \le 1$, the level set $f^{-1}(u)$ is a rectangular prism transversal or finitely tangent to $K$ thickened with $\mathcal{P}$.
    \item For each $0< u \le 1$, we have
    \begin{equation*}
        w(f^{-1}_C(u), K)\le 3,
    \end{equation*}
    where this weight is computed with respect to the thickened graph $K\cap C$ using $\mathcal{P}$.
    \item $f^{-1}_C(0)$ is a point disjoint from $K$.
\end{enumerate}
\end{lemma}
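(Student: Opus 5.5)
The idea is to make the thickening polyhedron itself a small rectangular prism parallel to the cube faces. The growing spheres are then nested parallel prisms, and the only subtle moment is when a sphere first touches the thickened vertex. Concretely, I would take $\mathcal{P}$ to be an axis-parallel cube centered at $V$, small enough to lie in the interior of $C$. Its edges must be long enough that every edge of $K$ at $V$ leaves $\mathcal{P}$ through the interior of a face. This is possible because the edge directions are transverse to the coordinate planes, as arranged when the cubes were chosen. The vertex $V$ has at most three edges of the $1$-skeleton inside $C$: the (at most) two edges of the smallest tetrahedron used by $K$, plus possibly the connecting edge to a neighbouring pyramid. Since $K$ is a cycle, at most two of these edges are actually used at $V$.

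The map $f_C$ is built in two stages. First I pick a point $p$ in the interior of $\mathcal{P}$ but off $K$, for instance a point near a corner of $\mathcal{P}$ and away from the segments. By the point case of Proposition \ref{prop: PP/LP existance of phi}, there is a map $\overline{\mathcal{P}'}\to[0,1/2]$ whose level sets are axis-parallel prisms shrinking to $p$, where $\mathcal{P}'$ is a slightly enlarged copy of $\mathcal{P}$. Second, the same proposition applied to the $PP$-configuration $\mathcal{P}'\subset C$ gives a map $\overline{C-\mathcal{P}'}\to[1/2,1]$ with prism level sets. Gluing these along $\partial\mathcal{P}'$ gives $f_C$ with $f_C^{-1}(0)=\{p\}$ disjoint from $K$, which is property 3, and $f_C^{-1}(1)=\partial C$.

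For the weights on $[1/2,1]$, I argue as in the proof of Lemma \ref{lemma: existence of intermediate map}. Each level prism contains $\mathcal{P}$ and meets every used edge exactly once, transversally, because the edge has an endpoint inside the prism and is not parallel to any face. So the weight equals $w(\partial C,K)\le 2$.

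The main obstacle is the interval $(0,1/2)$, where the growing prism sweeps across the thickened ball $\mathcal{P}$. Here the artificial components of Figure \ref{fig: artificialintersection} appear. I would try to control them by choosing $p$ and the interpolation so that the level prisms are translates and dilates whose faces stay parallel to those of $\mathcal{P}$.
\begin{itemize}
\item While a level prism lies strictly inside $\mathcal{P}$, it meets nothing of the thickened graph.
\item When it crosses $\partial\mathcal{P}$, its intersection with the polyhedral surface $\partial\mathcal{P}$ is a union of rectangular pieces of faces. Since the prisms are nested and parallel and eventually contain $\mathcal{P}$, this intersection should be a single connected annulus- or disk-like region, contributing one component.
\item The edges of $K$ that leave $\mathcal{P}$ contribute at most two further transverse points, once the prism has passed their exit points on $\partial\mathcal{P}$.
\end{itemize}
This gives at most $1+2=3$, which is property 2.

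The delicate points are finite tangency and connectedness. Tangency occurs at the finitely many parameters where a face of the level prism coincides with a face of $\mathcal{P}$. I would avoid these coincidences, or make them finitely tangent as allowed, by taking $p$ generic so that the level faces reach the faces of $\mathcal{P}$ at distinct times. Connectedness requires showing that a parallel prism meets the boundary of a parallel box in one connected set. I expect this to follow from convexity: a face-parallel prism and a face-parallel box that partially overlap meet along a connected piece of the box's boundary.
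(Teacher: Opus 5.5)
Your proposal has a genuine gap, and it comes from the choice of center. You put $p$ in the interior of $\mathcal{P}$. But $\mathcal{P}$ is the thickened vertex, a ball that belongs to the thickly embedded graph. So $f_C^{-1}(0)=\{p\}$ is \emph{not} disjoint from $K$ thickened by $\mathcal{P}$, and property 3 fails; this is exactly what (SD 1) needs at the leaves of $\cT$. Your first bullet is wrong for the same reason: a level prism strictly inside $\mathcal{P}$ does not miss the thickened graph, it lies entirely inside it. Hence $p$ must lie outside $\mathcal{P}$, and your description of how the prisms sweep across $\mathcal{P}$ no longer applies.

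The missing idea is to choose the center first and the thickening afterwards, as the paper does. Fix $p_V\notin K$ so that $V$ lies in the interior of one of the six pyramids $\Delta$ with apex $p_V$ into which the homothetic cubes $C_u$ cut $C$. Then choose $\mathcal{P}$ inside the interior of $\Delta$. This gives $p_V\notin\mathcal{P}$ automatically. Moreover, $\partial C_u$ can meet $\mathcal{P}$ only along its one face parallel to the base of $\Delta$, so $\partial C_u\cap\mathcal{P}$ is a plane section of a convex polyhedron and has one component. This replaces your connectivity claim, which does not follow from convexity alone: the boundary of a thin slab crossed by a long rod meets the rod in two disjoint pieces. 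For two cubes the claim happens to hold, but that needs an argument.

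Taking $\mathcal{P}$ to be an axis-parallel cube causes a second problem. The level prisms have faces parallel to those of $\mathcal{P}$ and eventually contain it. So at certain parameters a level face lies in the plane of a face of $\mathcal{P}$ and overlaps it in a two-dimensional region. There the sphere is tangent to the thickened vertex along a whole region, which is neither transverse nor finitely tangent. Choosing $p$ generically only makes these parameters distinct; it cannot remove them. The paper avoids this by requiring the faces of $\mathcal{P}$ to be transverse to the cube faces and to the edges. Then a level plane meets $\partial\mathcal{P}$ either transversally along a polygon or at isolated vertices.

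Finally, your count of at most two edge points, ``once the prism has passed their exit points'', assumes the level function increases along each outgoing edge. For some positions of $p$, for instance near a corner of $\mathcal{P}$ as you suggest, a level prism can cut an outgoing edge twice before containing its exit point. So the position of the center relative to the edge directions has to be controlled; the paper's one-line treatment of the edge count would also benefit from this argument.
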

\begin{proof}
The vertex cubes have, by definition, the vertices of the $1$-skeleton that have been visited by the knot. Consequently, these vertices have degree $2$. 

For any interior point $p$ of $C$, we can create the linear homotopy from $C$ to $p$:
\begin{equation}
    H(x, u) = (1 - u)p + ux, \;\; 0\le u \le 1.
\end{equation}
This generic homotopy divides the cube into six different pyramids whose bases are parallel to a face of the cube. As long as $p$ is not collinear with $V$ and a point on the edges of the cube, the vertex $V$ will be in the \textit{interior} of one of these pyramids. This is an open condition and can always be achieved. We will assume we have fixed such a point $p$ and we will call it $p_V$. We can further assume that $p_V$ is not on $K$. We will denote the pyramid it lies by $\Delta$.

For each $0 < u \le 1$, the level set
\begin{equation*}
    C_u := \{H(x, u) \mid x\in C\},
\end{equation*}
is also a cube homothetic to $C$, from $p_V$. In particular, because it has $V$ in its interior, and the two edges incident at $V$ intersect $C$, they must also intersect $C_t$ at two points. 

Finally, because $V$ lies in the interior of 
$\Delta$, we can put a small convex polyhedron $\mathcal{P}$ around $P$ that lies in the \textit{interior} of $\Delta$. We further assume all the faces of $\mathcal{P}$ are transversal to the edges of the knot, which is to say, to the sides of the $1$-skeleton. We also assume its faces are transversal to the faces of the cubes. Together these are a finite number of linear conditions, so such convex polyhedron exists. We claim that if we thicken $V$ with this $\mathcal{P}$, the result will follow. 

Proposition \ref{prop: PP/LP existance of phi} applied to $p_V$ and $C$ implies the existence of a map 
\begin{equation*}
    f_C: C \longrightarrow [0, 1].
\end{equation*}
The level sets of these maps are the $C_u$ defined above (this is part of the proof of proposition \ref{prop: PP/LP existance of phi}. We refer the reader to it.) 

For the intersection with the edges, Property 1 follows from the corresponding property of proposition \ref{prop: PP/LP existance of phi}. For the intersection with the thickened vertex, we do as follows.
The thickened vertex $\mathcal{P}$ lies strictly inside $\Delta$. Hence, for a fixed level set, $\mathcal{P}$ can only intersect its face parallel to the base of $\Delta$. This face lies on a plane parallel to the faces of the cube. Consequently, the intersection will be a single component. Indeed, a plane and a \textit{convex} polyhedron intersect in a single component, which is the boundary of the convex polygon that the plane cuts of the solid polyhedron.
If this intersection is  a single point, then the intersection is finitely tangent. On the other hand, if the intersection is a proper polygon, then on each face of $\mathcal{P}$, the intersection is between two non-parallel planes. Hence, it is transversal. This concludes the proof of property $1$.

To see the validity of property $2$, notice that the thickened graph $K\cap \cC$ has three pieces: $\mathcal{P}$ and the two edges adjacent to $V$ without their segment interior to $\mathcal{P}$. 

The weight is, by definition, the number of connected components of the intersection. These intersections can be of two kinds: $f_C^{-1}(u)$ intersects only the edges or it intersects $\mathcal{P}$ and the edges.
Each edge, being a straight segment transversal to all the faces of the level set, can only intersect it at one point. Thus, the edges contribute to the weight at most $2$. We justified above that the only face of the cube that intersects $\mathcal{P}$ does it in at most one component (which might be an artifical intersection). We conclude that, in any case, the number of connected components in the intersection are at most $3$.

Finally, property $3$ is true because $f_C^{-1}(0) = p_V$ which by construction is the center of the homotopy and is disjoint from $K$ thickened by $\mathcal{P}$

This concludes the proof.
\end{proof}
\begin{remark}
    If we were uncareful and we just make an homotopy (or something similar) but we do not control where are the intersections with $\mathcal{P}$ we could land in the situation that all the faces intersect $\mathcal{P}$ and, on top, they intersect the edges outside. This would give a count of $8$, which would make our bound grow unnecessarily.
\end{remark}

\subsection{The main theorem}

We have now done all the required work. We can finally prove

\begin{theorem}
    \label{thm: sphere decomposition}
Let $K$ be a knot that can be embedded in a finite iteration of the Sierpinski tetrahedron. Then $K$ admits a sphere decomposition of weight at most $6$.
\end{theorem}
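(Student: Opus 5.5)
We will assemble the global map $f:\mathbb{S}^3\to\cT$ by gluing the local maps already constructed, region by region, along the cube boundaries that separate them. The closed pieces of the subdivision are: the exterior region (the closure of the complement of the largest cube $E_0$), the intermediate regions $\cC$, and the vertex cubes $C$. For each intermediate region we take the map $f_{\cC}:\cC\to\cT_{\cC}$ of lemma \ref{lemma: existence of intermediate map}, whose leaves $v_A,\dots,v_D$ correspond to the interior cubes and whose root $v_E$ corresponds to the surrounding cube. For each vertex cube we take $f_C:C\to[0,1]$ of lemma \ref{lem: vertex cubes map}, together with the thickening $\mathcal{P}$ it provides; these are the thickenings we fix once and for all for the vertices of $K$. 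For the exterior we apply proposition \ref{prop: PP/LP existance of phi} in the point-inside-prism form, after a homeomorphism of $\mathbb{S}^3$ sending the point at infinity to an interior point. This gives a map from the exterior ball to an interval $[0,1]$ with preimage of $0$ the point at infinity (disjoint from $K$), preimage of $1$ equal to $\partial E_0$, and intermediate level sets nested spheres that miss $K$ entirely, hence of weight $0$.

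The tree $\cT$ is the quotient of the disjoint union of these trees and intervals. We identify the root $v_E$ of $\cT_{\cC}$ with the leaf $v_E$ of $\cT_{\cC'}$ whenever $E$ is an interior cube of $\cC'$. We identify a leaf of $\cT_{\cC}$ corresponding to a vertex cube $C$ with the endpoint $1$ of $f_C$'s interval. Finally, we identify the root of the outermost region with the endpoint $1$ of the exterior interval.

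Each identified point has preimage a single cube boundary $\partial E$ on both sides, so the local maps agree on intersections, and the gluing lemma gives a continuous $f$. The cubes are nested as a rooted tree (the Sierpinski iteration structure) and each cube is glued exactly twice, once as a root and once as a leaf. Hence $\cT$ is a tree whose trivalent vertices are exactly those of the $\cT_{\cC}$. The former gluing points become interior points of edges, of degree $2$, and we simply regard them as interior. The leaves of $\cT$ are the centres $p_V$ of the vertex cubes and the point at infinity. Cubes that $K$ does not visit were removed, and their space belongs to the surrounding intermediate region, so nothing is lost.

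Checking (SD1)–(SD3) is then local:
\begin{itemize}
\item (SD1) The leaves map from the points $p_V$ and $\infty$, which are disjoint from $K$ by construction.
\item (SD2) The double bubbles are those of lemma \ref{lemma: existence of intermediate map}, which are transverse and contain no thickened vertex, since the vertex thickenings lie inside vertex cubes.
\item (SD3) At the former gluing points the preimage is a cube boundary transverse to $K$ (the edges were chosen transverse to the cube faces and meet them in face interiors). Elsewhere it is transverse or finitely tangent by lemmas \ref{lemma: existence of intermediate map} and \ref{lem: vertex cubes map}.
\end{itemize}

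For the weight, every non-vertex point $x$ lies in one local piece, and the weight of $f^{-1}(x)$ against the thickened $K$ is computed inside that piece. It is $\le 6$ in intermediate regions by proposition \ref{prop: wieght bound 6}, $\le 3$ in vertex cubes by lemma \ref{lem: vertex cubes map}, and $0$ outside. So $w(f)\le 6$.

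The main obstacle I expect is the bookkeeping at the interfaces. First, the weight of a sphere in an intermediate region is counted against the thickened graph, not just against $K\cap\cC$. One must check that no thickened vertex $\mathcal{P}$ meets those spheres. This holds because each $\mathcal{P}$ lies inside its vertex cube, and the prisms of lemma \ref{lemma: existence of intermediate map} either surround that cube entirely or are disjoint from it, thanks to the separating-plane assumption. Second, the construction must be uniform: the small scaling factor and cube enlargements must be chosen before the $\mathcal{P}$, and the $\mathcal{P}$ shrunk afterwards, without breaking transversality.
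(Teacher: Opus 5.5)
Your proposal is correct and is essentially the paper's proof: you glue the maps of lemma~\ref{lemma: existence of intermediate map} and lemma~\ref{lem: vertex cubes map} along the cube boundaries, treat the identified leaf--root points as interior points of edges, and check (SD1)--(SD3) and the bound $6$ piece by piece. The only difference is the exterior piece. The paper uses the scaling $x\mapsto ux$, $u\in[1,\infty]$, whose level sets are the cubes $uE$; your reduction to the point-in-prism case is equivalent, provided your homeomorphism carries $\overline{\mathbb{S}^3-E_0}$ onto a cube (for example the max-norm inversion $x\mapsto x/\|x\|_\infty^2$ after normalising $E_0$ to $[-1,1]^3$), since a generic homeomorphism would not send $\partial E_0$ to a rectangular prism.
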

\begin{proof}
    Let $\cC_1,..., \cC_m$ be all the intermediate regions. Let $E$ be the exterior cube. Let $P_1,..., P_k$ be the vertex cubes used in the construction. In other words, inside of them are the vertices of the smallest tetrahedrons visited by the knot. Without loss of generality assume that $\cC_1$ is the intermediate region whose surrounding cube is $E$ itself.

    We can define an homotopy $H:E\times [0, \infty) \longrightarrow \overline{\mathbb{S}^3-E - \{\infty\}}$ by
    \begin{equation*}
        H(e, u) = ue
    \end{equation*}
    At a fixed time $u_0$, the image of $e \longrightarrow H(e, u_0)$ is a cube $E_{u_0}$ with sides parallel to $E$. Consequently, we have a map 
    \begin{equation*}
        f_E: \overline{\mathbb{S}^3-E} \longrightarrow [1, \infty],
    \end{equation*}
    given by 
    \begin{equation*}
        f_E(x) = t
    \end{equation*}
    if $x\in E_t$ and $f_E(\infty)=\infty$. We shall call $\infty$ the root of this tree (which is an edge) and call it $\cT_{\infty}$. This map is continuous and the proof is the same as that of proposition \ref{prop: PP/LP existance of phi}.

    In this way, we have a family of continuous maps $f_E, f_{C_1},..., f_{C_m}$, $f_{P_1},..., f_{P_k}$. For the intermediate regions, the maps $f_{C_j}$ are the ones constructed in lemma \ref{lemma: existence of intermediate map}. For the vertex cubes the maps are constructed in lemma \ref{lem: vertex cubes map}.

    Suppose $\cC$ is an intermediate region and $X_1,..., X_k$ are the contiguous regions it surrounds. We know $1\le k \le 4$. Each $X_i$ is a cube, possibly a vertex cube.
    
    By construction, the tree $\cT_{\cC}$ has a vertex labeled after each $X_i$. In the same way, the trees $\cT_{X_i}$ has its root labeled after $X_i$. Thus, if we identify the leaf associated to $X_i$ in $\cT_{\cC}$ with the root of $\cT_{X_i}$, for each $1\le i\le k$, we obtain a new tree $\cT_0$. The vertices where the gluing happened have degree two because each glued tree contributed one edge. In $\cT_0$ we now consider this gluing point as part of the edge and \textbf{not} a vertex. 
    
    Notice that
    \begin{equation*}
        \cC \cup X_1 \cup \cdots \cup X_k
    \end{equation*}
    is a finite union of closed sets and the nonempty intersections among them are the exterior boundaries of $X_1,..., X_k$. Consequently, we can define
    \begin{equation*}
        f_0: \cC \cup X_1 \cup \cdots \cup X_k \longrightarrow \cT_0,
    \end{equation*}
    as $f_{\cC}$ in ${\cC}$ and $f_{X_i}$ in $X_i$. The gluing lemma implies this map is continuous. Notice that under this map the preimage of the gluing points are precisely the boundaries of $X_i$, which are topological spheres.

    Let us summarize what have we done. For each intermediate region, we have created a new map that extends $f_{\cC}$ into the interiors of the regions it surrounds. We have done this in such a way that the codomain is a tree built out of the trees we already had. We can now proceed inductively from the first intermediate region towards the vertex cubes. Indeed, our previous construction guarantees we can extend the map up to $C_1 \cup X_1\cdots \cup X_k$. If all of these $X_1,..., X_k$ are vertex cubes then we have
    \begin{equation}
        E = \cC_1 \cup X_1 \cup \cdots \cup X_k,
    \end{equation}
    and we have built a map in all of $E$. If not then we repeat this argument in each of the $X_i$ that is an intermediate region. Once more the gluing lemma guarantees that the function we already have will glue continuously onto the tree formed by identifying the leaf associated to $X_i$ with the corresponding root. This process must finish because there is a finite number of regions. 

    Hence, we have a map $f_1: E\longrightarrow \cT_1$ that is built by successively using the gluing lemma to attach together all the $f_{C_1},..., f_{C_m}$, $f_{P_1},..., f_{P_k}$. Finally, we apply the gluing lemma one last time to glue $f_1$ and $f_E$ along the boundary of $E$, which is a cube, to produce a map $f:\mathbb{S}^3\longrightarrow \cT$. The tree $\cT$ is obtained by gluing the trees of $f_1$ and $f_E$  at their corresponding vertex associated to $E$, but this new point is not considered a vertex of the new tree. Our new tree has as root $\infty$.

    What now prove that $f:\mathbb{S}^3\longrightarrow \cT$ is a sphere decomposition. We verify the properties one by one.

    \underline{\textit{(SD 1) For all $x\in L(\cT)$, $f^{-1}(x)$ is a point disjoint of $K$:}} The trivalent tree $\cT$ is formed by gluing the trees
    \begin{equation*}
        \cT_{\infty}, \cT_{\cC_1}, \cT_{\cC_2},..., \cT_{\cC_m}, \cT_{P_1},..., \cT_{P_k}.
    \end{equation*}
    The leaves of $\cT$ are those leaves of the above trees that were not glued. To do not be glued, the leave must be associated to a region that has no further region inside of them. This only occurs for $P_1,..., P_k$ and $\overline{E^c}$ (whose inside is the region containing $\infty$). By construction, these trees are edges representing an homotopy from the region to a point. Neither of these touch $K$. Indeed, the former do not by lemma \ref{lem: vertex cubes map}. The latter, being $\infty$, is not on $K$. This proves the first property.

    \underline{\textit{(SD 2)For all $x\in V(\cT)-L(\cT)$ , $f^{-1}(x)$ is a double bubble transverse to $K$:}} The vertices in $V(\cT) - L(\cT)$ are the trivalent vertices. These vertices are not being identified to any other points, thus its preimage coincides with that of the corresponding map being glued. Furthermore, the only trees with trivalent vertices are those corresponding to  $\cC_1,..., \cC_m$. The result now follows by lemma \ref{lemma: existence of intermediate map}.

    \underline{\textit{(SD 3) For all $x\in \cT$ interior to an edge, $f^{-1}(x)$ is a sphere transverse or finitely tangent to $K$:}}
    The vertices interior to an edge are of two sorts: those that are points of identification but not considered vertices and those that are interior to an edge and were not identified. The former ones correspond to the regions where two closed sets intersected. By construction, these are all surfaces of cubes transversal to $K$.
    The latter points satisfy what we want by lemma \ref{lemma: existence of intermediate map} in the case of intermediate regions. For the case of $f_{\infty}$, all these regions are surfaces of cubes. Finally, for the case of $f_{P_k}$, this follows by lemma \ref{lem: vertex cubes map}.
    
The three properties hold. Hence, $f$ is a sphere decomposition. To verify its width, we realize that each interior point has as a preimage the surface of a sphere of which we have already computed its weight.

We begin by interior points of the glued trees. For $f_{\infty}$ the weight is zero because they never intersect the knot. For every intermediate region, proposition \ref{prop: wieght bound 6}, guarantees the intersection is at most $6$. For the vertex cubes, lemma \ref{lem: vertex cubes map} implies the weight is bounded by $3$. 

For the interior points of $\cT$ that were formed by identifying leaves of two other trees, notice that their preimage is precisely a boundary between an intermediate region and a contiguous one. For all of them, the intersection is at most $4$, by proposition \ref{prop: bound on cubes}. Consequently, the spherewidth of $f_{\cC}$ is bounded by $6$.

This concludes the proof of the theorem.
\end{proof}
\begin{remark}
\label{remark: ignore last edges}
    In the course of the proof we have seen that the number of leaves of $\cT$ is equal to the number of vertex cubes plus one, the root, associated to $\infty$. The edges associated to the homotopies for the vertex cubes or the outer cube have weight at most $3$. Their other vertex is one associated to the vertex cubes or, in the case of $\infty$, to the surrounding cube $E$. Consequently, the structure of the tree can be read from the rest of the information. In particular, in our further discussions we will ignore these edges as their information do not change our conclusions.
\end{remark}

We finally have our desired result.

\begin{theorem}
    \label{thm: Not all knots are in}
    Let $T(p, q)$ be a torus knot with $\min\{p, q\} > 9$, then
    $T(p, q)$ cannot be embedded in the $1$-skeleton of a finite iteration of the Sierpinski tetrahedron. In particular, there are infinitely many knots that cannot be embedded in the $1$-skeletons of the finite iterations of the Sierpinski tetrahedron.
\end{theorem}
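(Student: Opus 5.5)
The plan is a proof by contradiction that combines Theorem \ref{thm: sphere decomposition} with the lower bound of \cite{LunelSphereWidth}, namely $sw(T(p,q))\ge \frac{2}{3}\min\{p,q\}$. Suppose $T(p,q)$, with $\min\{p,q\}>9$, embeds in the $1$-skeleton of some finite iteration of the Sierpinski tetrahedron.

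The first step is to check that the embedding meets the hypothesis of Theorem \ref{thm: sphere decomposition}. That theorem is stated for a knot embedded in a finite iteration. Its proof first passes to the modified $1$-skeleton, where the contact vertices are replaced by short edges and the tetrahedra are enclosed in cubes. So I will recall the observation from the subsection on the change of $1$-skeleton. The inserted edges can be contracted back to points, so a cycle in the original skeleton corresponds to a cycle in the modified skeleton with the same knot type. The construction then applies verbatim.

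The second step is to apply Theorem \ref{thm: sphere decomposition}. It gives a sphere decomposition $f$ of a thickened embedding of $T(p,q)$ with $w(f)\le 6$. By the remark following Definition \ref{def: width}, $sw(T(p,q))\le w(f)\le 6$. The spherewidth is an invariant of the knot type, not of the particular embedding, so this bound applies to $T(p,q)$ itself.

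The third step combines this with the bound from \cite{LunelSphereWidth}:
\begin{equation*}
6\ge sw(T(p,q))\ge \tfrac{2}{3}\min\{p,q\},
\end{equation*}
so $\min\{p,q\}\le 9$. This contradicts the hypothesis, and therefore no such embedding exists. For the ``in particular'' clause, take $p$ and $q$ coprime with $\min\{p,q\}>9$, for example $T(10,11),T(11,12),\dots$. Distinct coprime pairs with $1<p<q$ give distinct torus knots, for instance because their Alexander polynomials are distinct. Hence there are infinitely many non-embeddable knots.

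The main point needing care is that the invariant bounded in Theorem \ref{thm: sphere decomposition} is the same one bounded in \cite{LunelSphereWidth}. Our decomposition uses a specific polyhedral thickening $\mathcal{P}$ at the vertices and allows finite tangencies. I would check that Lunel–de Mesmay's definition takes the infimum over all thickened embeddings of the knot, viewed as a graph, and allows finitely tangent spheres. Then our decomposition is admissible for their infimum and the inequality chain is legitimate. Apart from this bookkeeping the argument is immediate.
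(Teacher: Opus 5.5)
Your proposal is correct and follows the paper's proof: assume an embedding, use Theorem~\ref{thm: sphere decomposition} to get $sw(T(p,q))\le 6$, and combine this with the Lunel--de Mesmay bound $sw(T(p,q))\ge \frac{2}{3}\min\{p,q\}$ to force $\min\{p,q\}\le 9$. Your extra bookkeeping makes explicit what the paper leaves implicit: contracting the inserted edges, showing $T(10,11),T(11,12),\dots$ are distinct via Alexander polynomials, and checking that the two definitions of spherewidth agree.
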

\begin{proof}
In \cite[Corollary 1.3]{LunelSphereWidth} it is proven that
\begin{equation*}
    sw(T(p, q))\ge \dfrac{2}{3}\min\{p, q\}.
\end{equation*}
If $T(p, q)$ is in a finite iteration of the Sierpinski tetrahedron, theorem \ref{thm: sphere decomposition} implies
\begin{equation*}
    \dfrac{2}{3}\min\{p, q\} \le 6.
\end{equation*}
In other words,
\begin{equation*}
    \min\{p, q\} \le 9.
\end{equation*}
There are infinitely many torus knots not satisfying the above. This concludes the proof.
\end{proof}

\section{An example with the Trefoil}
\label{sec: An example with the Trefoil}

In this section we will give an example for the trefoil $3_1$. Part of our goal is to convince ourselves that the sphere decomposition, and hence the tree, can be read in an straightforward way from the diagram in the combinatorial representation.

\subsection{Intermediate regions and the combinatorial representation}

We recall that a combinatorial representation is a \say{map} of a finite iteration that allows us to see a finite iteration in two dimensions. It is constructed by flattening each iteration. This sequence can be seen in figure \ref{fig: combinatorial representation}.

\begin{figure}[H]
    \centering
    \includegraphics[scale = 0.3]{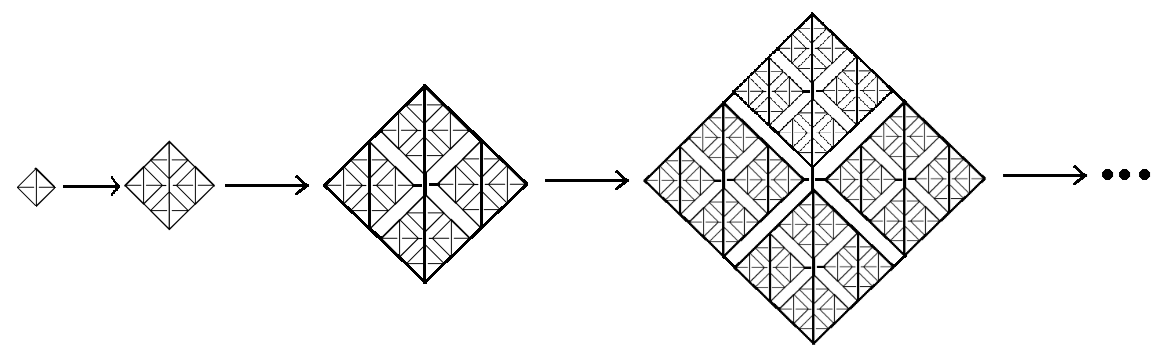}
    \caption{Sequence of  combinatorial representation}
    \label{fig: combinatorial representation}
\end{figure}

In particular, we can read the intermediate regions we used to construct our sphere decompositions, via the flattening of a single iteration. We see this in figure \ref{fig: regions and combinatorial representation}.

\begin{figure}[H]
    \centering
    \includegraphics[scale = 0.5]{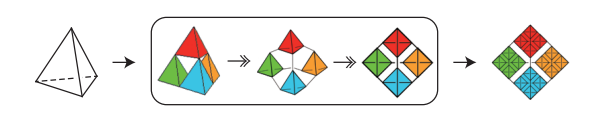}
    \caption{For a particular iteration, the four interior cubes correspond to the four color squares.}
    \label{fig: regions and combinatorial representation}
\end{figure}

To construct the tree and count the weights of the spheres, what matter is to know how to connect different regions and the number of components on each intermediate sphere. By localizing in the corresponding zone of the combinatorial representation, this can be readily counted.

\subsection{The example for the trefoil}

In \cite{knotsinsidefractals} we have found the trefoil in a finite iteration of the tetrahedron. This corresponds to the combinatorial representation show in figure \ref{fig: combinatorial representation for trefoil}.
\begin{figure}
    \centering
    \includegraphics[scale = 0.1]{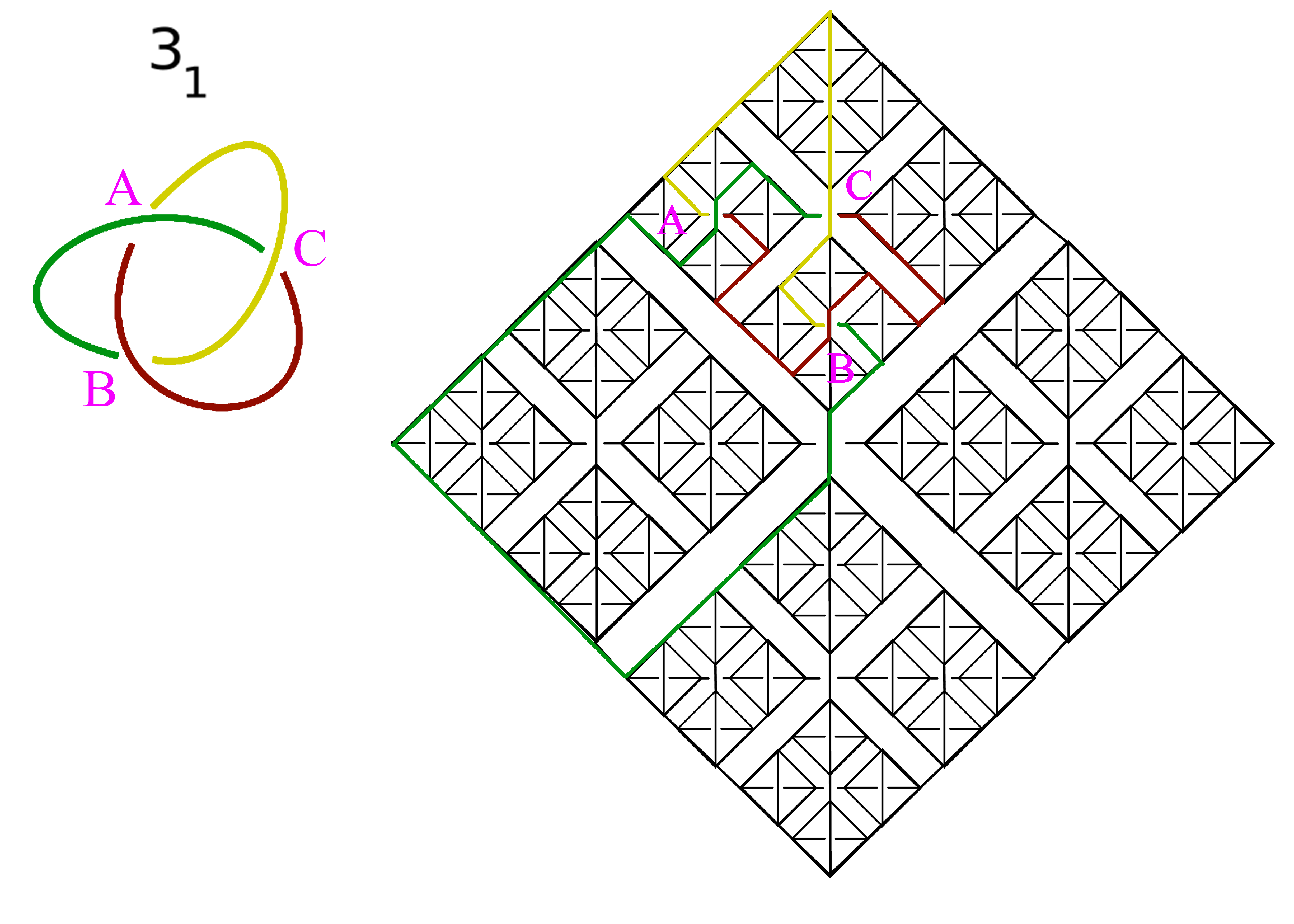}
    \caption{A combinatorial representation for $3_1$. The points $A$, $B$ and $C$ help localize the three crossings.}
    \label{fig: combinatorial representation for trefoil}
\end{figure}

In order to locate ourselves as we read, we will denote the different squares according to their position with respect to the north (N), south(S), east (E) and west (W) direction. For example, the first iteration has four pyramids corresponding to $N, S, E$ and $W$. For another example, take the smallest tetrahedron, in figure \ref{fig: combinatorial representation for trefoil}, where the letter \textbf{B} is located. Its location is $NSS$. We are in the third iteration so we need at most three coordinates. Finally, we shall call $F$ the exterior cube surrounding the whole tetrahedron. 

Following our convention, the different intermediate regions, \textit{besides} those in the smallest cubes, are:

\begin{enumerate}
    \item $F$ with interior cubes $N$, $S$ and $W$,
    \item $N$ with interior cubes $NN$, $NS$, $NW$, $NE$.,
    \item $NW$ with interior cubes $NWN$, $NW$, $NWW$ and $NWE$,
    \item $NS$ with interior cubes $NSN$, $NSW$, $NSS$ and $NSE$. 
\end{enumerate}

The smallest cubes of our configuration are: $W, S, NN, NE$, $NWN$, $NWS$, $NWW$, $NWE$, $NSN$, $NSW$, $NSS$ and $NSE$. Thus our final tree $\cT$ will have $12$ leaves besides the root corresponding to $\infty$. It will be constructed by gluing four trees, one per intermediate region. As per remark \ref{remark: ignore last edges}, we do not put the edges connecting to the leaves.

In order to see this more clearly, and be able to read our diagram easily, we have filled in the smallest cubes and highlighted the connections in figure \ref{fig: Diagram from representation}. Each intermediate region is filled in with a different color. The connection edges within regions that are used by the knot appear in blue. Those that do not appear in red. Notice that $E$ does not appear because the knot does not visit that pyramid.

\begin{figure}[H]
    \centering
    \includegraphics[scale = 0.4]{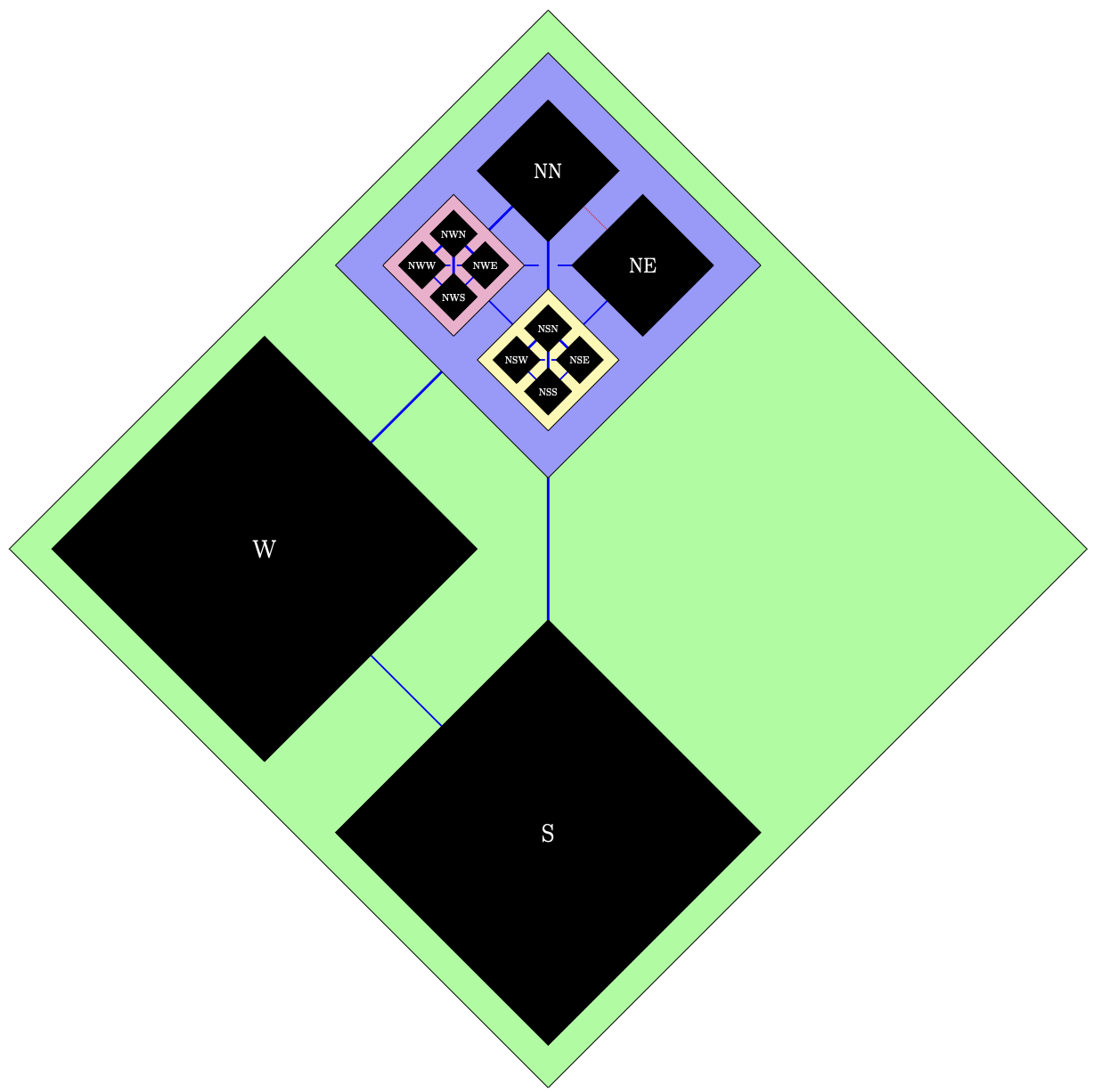}
    \caption{All the information to build the tree.}
    \label{fig: Diagram from representation}
\end{figure}

The way we will merge each of the intermediate regions, given in our above list, is as follows:
\begin{enumerate}
    \item We merge $N$ and $W$. Then we merge with $S$. 

    \item We merge $NN$ with $NW$. Then, we merge the result with $NS$. Finally, we merge with $NE$.

    \item We merge $NWW$ with $NWS$. We then merge the result with $NWE$. Finally, we merge with $NWN$.

    \item We merge $NSW$ with $NSS$. We then merge the result with $NSE$. Finally, we merge with $NSN$.
\end{enumerate}

Finally we must count the number of components. We have repeatedly seen this is equivalent to counting how many points are on the surface, although the former count is half the latter one. This is done by inspection. We look at each smallest regions and count. For example $W$ has a single component. $S$ has a single component. On the other hand, $NSE$ has two components. 

By \ref{lemma: existence of intermediate map}, the weight of the spheres, does not change along edges. Thus, we only need to understand the components in tetrahedrons and not at intermediate stages. 

We can now see the tree. We have them highlighted by intermediate region.  Finally, the number registered is the number of components, thus the weight is double that number. We emphasize this is because, as we have seen in the course of the proof of theorem \ref{thm: sphere decomposition}, all the involved intersections in the intermediate regions are transversal. Hence every connected component goes in and goes out at different points.

\begin{figure}[H]
    \centering
    \includegraphics[width=0.5\linewidth]{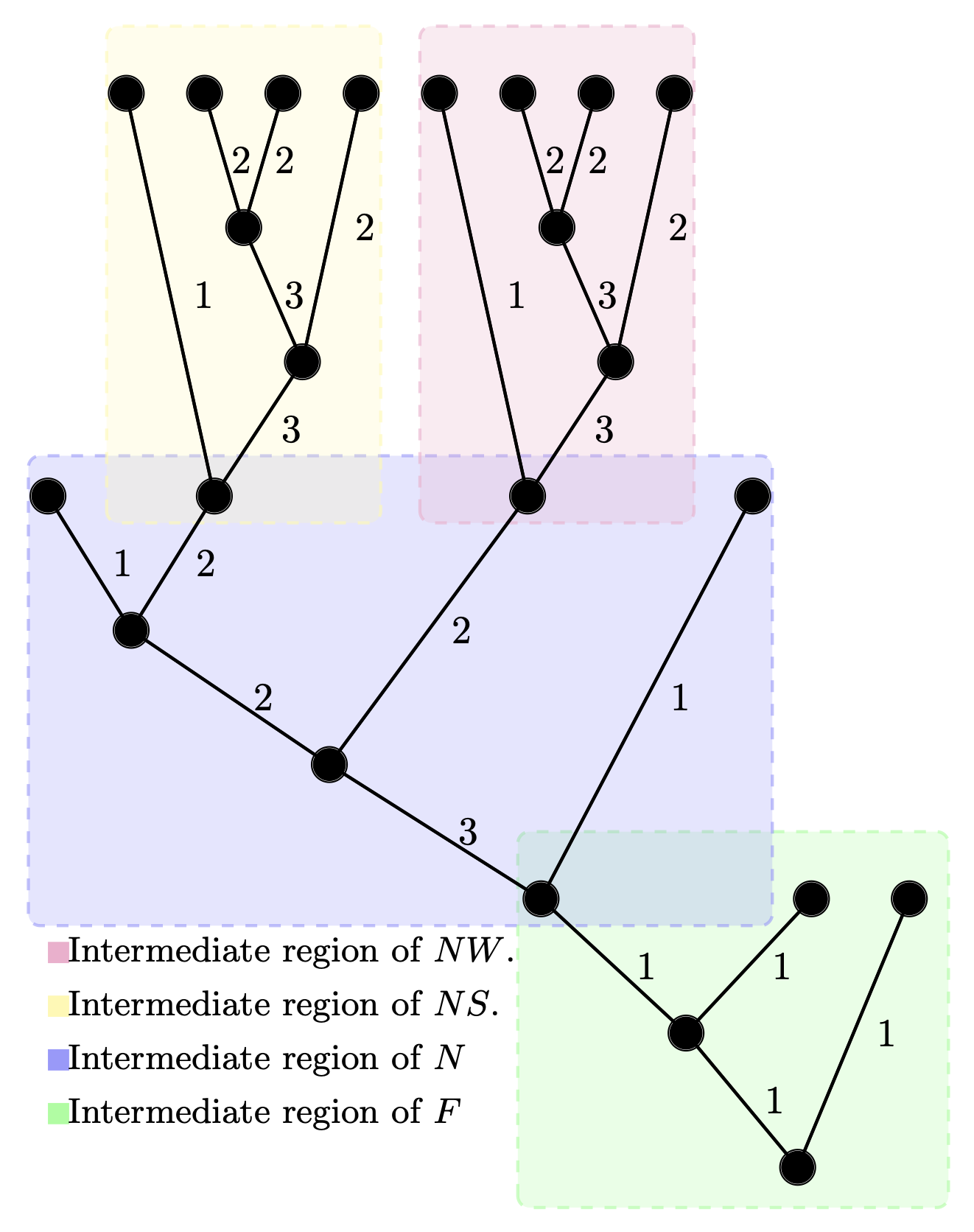}
    \caption{Trivalent tree for the trefoil.}
    \label{fig: Trivalent tree}
\end{figure}

We conclude this is a sphere decomposition with width $6$. Consequently $sw(3_1)\le 6$.

\begin{remark}
    We did not need to construct the whole tree to deduce the spherewidth of $f$. The intermediate regions at $NW$ and $NS$ use \textit{all} edges among the interior cubes and exits. Hence, corollary \ref{cor: miss gives 4} implies  $sw(f) = 6$.

    This doesn't imply that $sw(3_1) = 6$. As we will see, there are other sphere decompositions that bring the spherewidth lower.
\end{remark}

\section{Further Questions and Concluding Remarks}
\label{sec: Further Questions and Concluding Remarks}

\subsection{Pretzel Knots}

In \cite{knotsinsidefractals} we have proven that all Pretzel knots can be found in the finite iterations of the Sierpinski tetrahedron. Theorem \ref{thm: sphere decomposition} then implies its spherewidth is bounded by $6$. This bound can be improved if one forgets about the tetrahedron.

\begin{proposition}
\label{prop: pretzel bound 4}
Let $P(a_1,..., a_n)$ be a Pretzel knot, then
    \begin{equation}
    \label{eq: pretzel bound 4}
        sw(P(a_1,..., a_n)) \le 4.
    \end{equation}
\end{proposition}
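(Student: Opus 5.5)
We will produce an explicit sphere decomposition of a standard embedding of $P(a_1,\dots,a_n)$ whose spheres are all boundaries of rectangular prisms (or of unions of them), in the spirit of the proof of Theorem \ref{thm: sphere decomposition}, and bound every weight by $4$. We take the standard picture: two horizontal parallel arcs (top and bottom) joined by $n$ vertical twisted bands, the $i$-th band being a $2$-braid with $a_i$ half-twists, placed in the slab $\{i-\tfrac13 \le x \le i+\tfrac13\}$. We make the diagram polygonal and put it in general position with respect to three coordinate directions, so that every edge is transverse to the faces of every axis-parallel prism we use. We may then apply Proposition \ref{prop: PP/LP existance of phi} to interpolate between nested prisms, exactly as before.

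\emph{Step 1: one tassel.} For each $i$, we choose a thin prism $B_i$ containing the $i$-th twisted band. Then $\partial B_i$ meets $K$ in exactly $4$ points: the two strands entering from the top arc and the two leaving to the bottom arc. Inside $B_i$ the band is a rational tangle. We sweep it by prisms growing from a point near one end of the band along the band direction. A horizontal level cuts a $2$-braid in $2$ points, so a prism containing the first $k$ crossings meets $K$ in $2$ points at its cap plus the $2$ strands already below, giving weight at most $4$. To keep this honest we should grow a prism $Z_k$ containing crossings $1,\dots,k$ and whose two lateral long sides avoid the braid. Each $\partial Z_k$ meets only the four strands at its two ends, so we need no double bubbles, only Proposition \ref{prop: PP/LP existance of phi} applied between consecutive $Z_k\subset Z_{k+1}$. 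At the start, we use the vertex-cube trick of Lemma \ref{lem: vertex cubes map}, around a crossing point shifted off the knot, to get weight at most $2$ there. Hence every tassel admits a one-edge map $B_i\to[0,1]$ of weight at most $4$.

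\emph{Step 2: merging tassels left to right.} We merge $B_1$ and $B_2$ into a prism $R_2$ with a membrane between them, then $R_2$ with $B_3$ into $R_3$, and so on, as in Lemma \ref{lemma: existence of intermediate map}. Exactly as in the weight computation of Proposition \ref{prop: wieght bound 6}, the weight of $\partial R_k$ is the number of arcs of $K$ leaving $R_k$. Consecutive tassels are joined by one top arc and one bottom arc, which become interior. So $R_k$ meets $K$ in $2$ points from its left end (the arcs going back to close with tassel $1$ are outside) and $2$ from its right end, i.e.\ weight $4$. Note $w(\partial R_k)=4+4-2\cdot 2$. The interpolations between $B_k$ and the half of $R_k$ containing it preserve weight by the argument of property $4$ of Lemma \ref{lemma: existence of intermediate map}, and the interpolation between $R_{k-1}$ and its half of $R_k$ does too. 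Finally $R_n$ contains all of $K$ except the two closing arcs joining tassel $n$ back to tassel $1$. We either route these closing arcs inside the final prism or sweep outward from $R_n$ to a large cube containing $K$ and then to $\infty$ as in the proof of Theorem \ref{thm: sphere decomposition}; along the way the weight drops from $4$ to $0$.

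\emph{Main obstacle.} The delicate point is Step 1 together with the final closure. Inside a twisted band, an axis-parallel prism sweep might cut strands more than once near crossings, or create spurious tangencies. We would first try to realise each crossing as two straight segments at different heights with slopes transverse to all faces, so that each edge meets each level prism at most once, as in the surjectivity/injectivity argument of Lemma \ref{lemma: existence of intermediate map}. For the closing arcs we plan to draw them as long straight polygonal paths outside all the $B_i$. We then check that the prisms $R_k$ can be chosen to avoid them, which holds since they run above and below the tassel slab. This gives a sphere decomposition of width at most $4$, proving \eqref{eq: pretzel bound 4}.
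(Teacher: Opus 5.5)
This is essentially the paper's own argument. Each tassel is isolated in an axis-parallel prism, the prisms are merged through double bubbles so that every merging edge carries weight $4+4-2\cdot 2=4$, and each tassel is swept by prisms whose advancing face is horizontal. The only differences are that you absorb the two closing arcs in the final outward sweep instead of isolating them in their own (weight $2$) prisms, and that you use a chain $Z_1\subset\cdots\subset Z_{a_i}$ instead of a single homothety from a point near the bottom face of the tassel prism.

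Two small corrections:
\begin{itemize}
\item The initial homothety onto $Z_1$ cannot keep weight $\le 2$, since it ends at $w(\partial Z_1)=4$. What holds, and suffices, is $\le 4$. For example, this is true when each strand in $Z_1$ is a single straight segment, because the gauge of $Z_1$ centred at the homothety point is convex along lines, so each segment meets each level prism at most twice.
\item As the paper stresses, the thickened vertices must be chosen so that no level set meets a vertex ball in a component disjoint from the strand. Otherwise a horizontal slice through a naively thickened bend of the braid can give weight $5$: two bottom strands, the other strand, and two components for the bent strand.
\end{itemize}
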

 \begin{proof}
      We will just describe the main steps as the details are exactly the same as those of our previous results. We will do it for pretzels of three strands for ease of explanation.

      The proof consists of three main steps:
      \begin{enumerate}
          \item Get the appropriate thickened embedding.
          \item Find the regions to be isolated so that the combinatorics outside of them are doable.
          \item Fill in with what happens in the isolated regions with their own computations.
      \end{enumerate}

      For pretzel knots step $1$ can be done. We can assume all edges used are not parallel to any of the coordinate planes. We can thicken each vertex such that the weight in there, as we sweep, does not exceed $4$. We will leave the details of this to the interested reader.

      The regions that we will isolate are the individual tangles as well as the upper and lower strand joining the first and last tangles. To isolate them, we put around them solid prisms that are disjoint and that can be separated by planes in the $X$ and $Y$ directions. In this case we do not need the third direction because the membranes will only be parallel to two of the coordinate planes.

      This step is analogous to our intermediate regions. We knew something happened on beyond the boundary, whether toward the exterior cube or toward the vertex cubes. However, that was unnecessary for the construction of the sphere decomposition in this region and for the calculations of the weights in there.

      \begin{figure}[H]
    \centering
    \begin{subfigure}{0.28\textwidth}
        \centering
        \includegraphics[width=\linewidth]{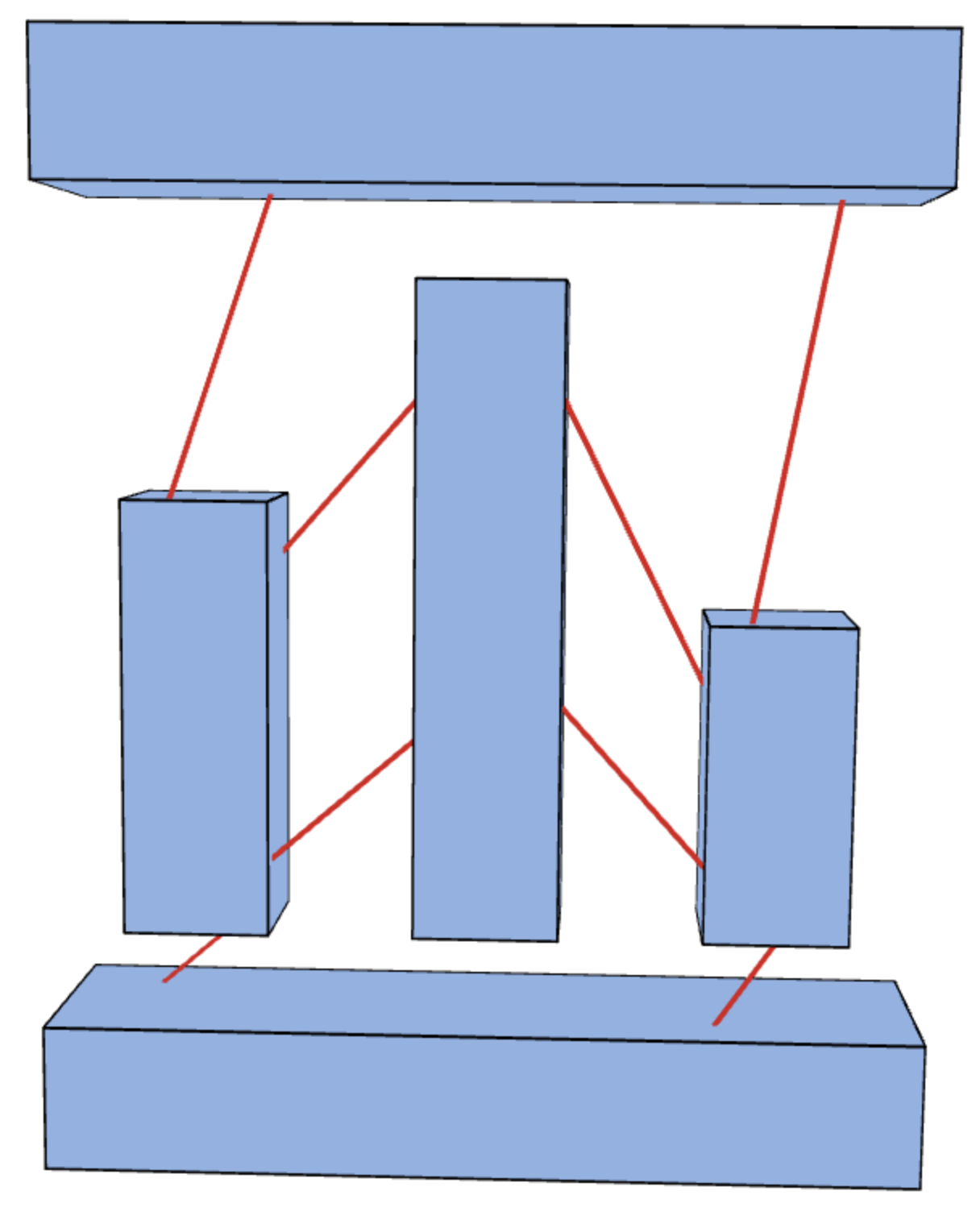}
        \caption{The isolated regions.}
        \label{fig: isolated}
    \end{subfigure}
    \hfill
    \begin{subfigure}{0.28\textwidth}
        \centering
        \includegraphics[width=\linewidth]{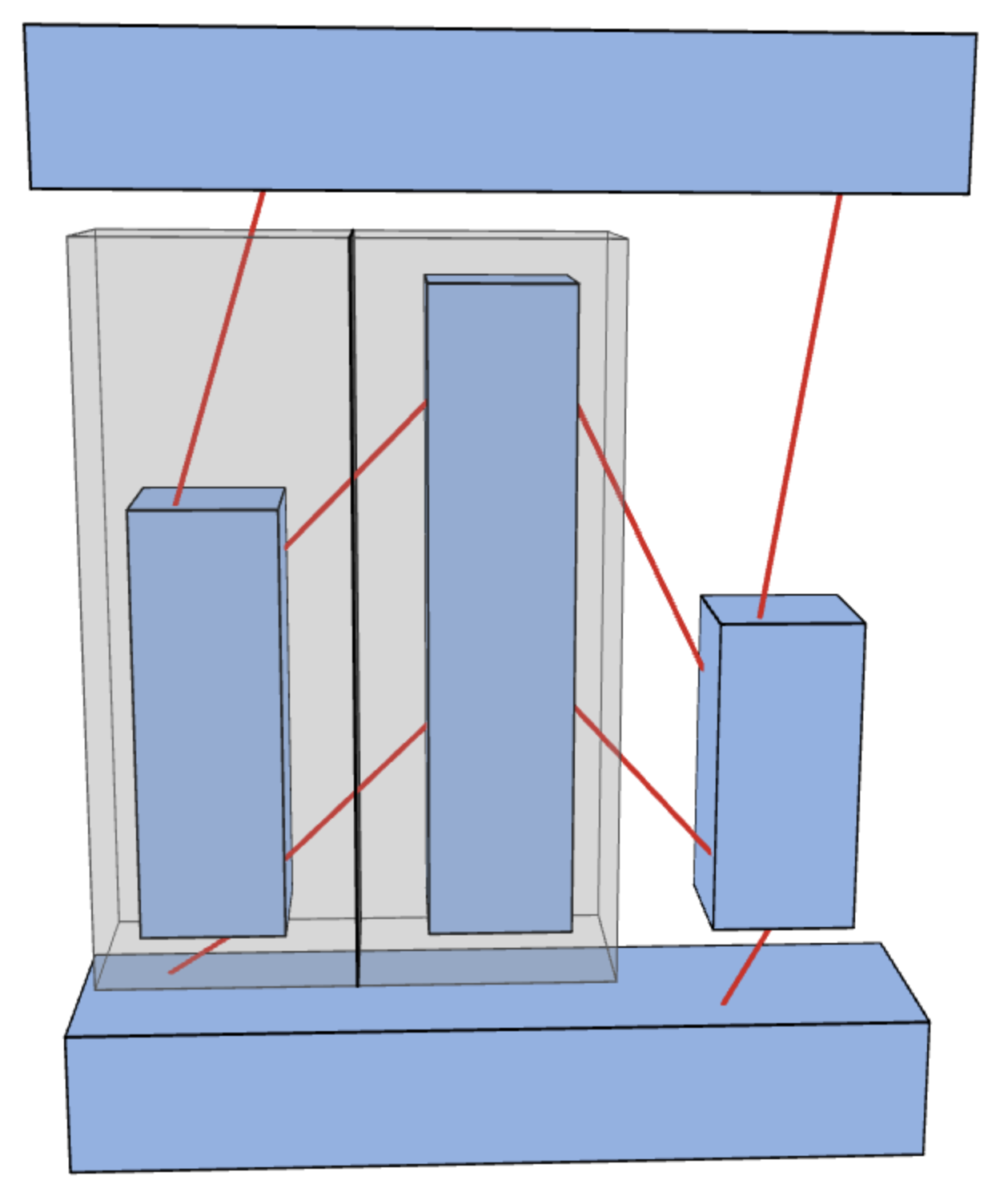}
        \caption{The first merge.}
        \label{fig: first merge}
    \end{subfigure}
    \hfill
    \begin{subfigure}{0.28\textwidth}
        \centering
        \includegraphics[width=\linewidth]{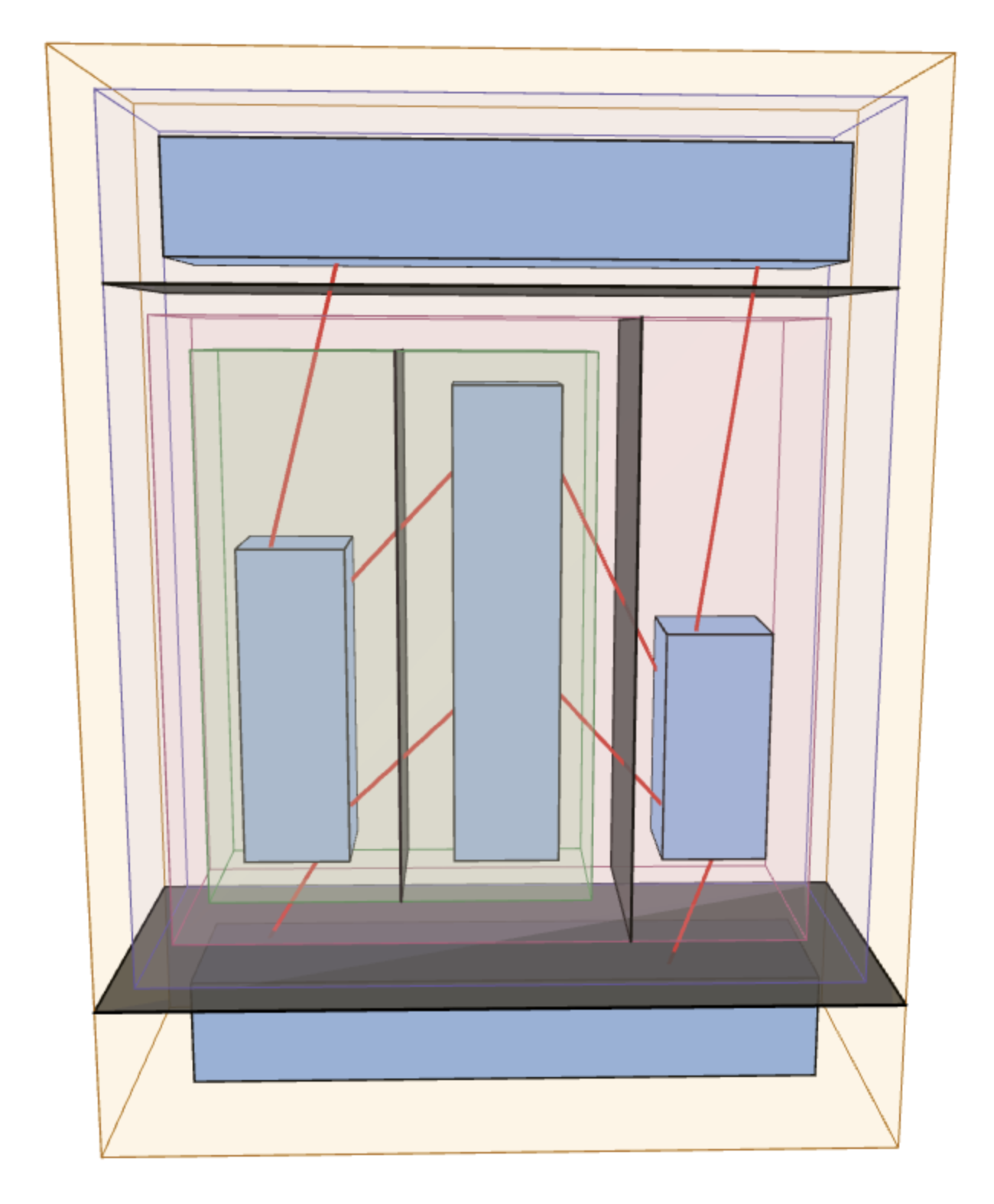}
        \caption{All the regions merged.}
        \label{fig: all regions merged}
    \end{subfigure}
    \caption{Process of merging with double bubbles}
    \label{fig: merging process pretzel}
\end{figure}

    We progressively \say{merge} the different isolated regions. This can be seen in figure \ref{fig: merging process pretzel}. Notice that figure \ref{fig: all regions merged} is the corresponding figure to figure \ref{fig: decomposition}.

    Just as in lemma \ref{lemma: existence of intermediate map}, a map can be created that glues all the interpolations on each merging process. Furthermore, the same analysis proves the weights depend only on the boundaries of these interpolations and does not change through them. Hence, in the above, we only need to verify what happens on them. An easy calculation shows that all of them have weight $4$ or $2$. This can be visually confirmed bt seing the intersections of the edges with the boundaries in figure \ref{fig: all regions merged}.

    This doesn't conclude our task. We must see that inside each isolated region the bounds are also preserved. For each one of the tangles, we can pick a point very close to the bottom face and make the homotopy to the surrounding prisms. One can assume, that except for the strands that go out of the prism, the tangle is entirely contained in the pyramid parallel to the upper face. This guarantees that the tangle is not cut into many different parts by different sides of the cube. This would bring the weight up. In this way, every level set cuts the tangle in three parts: the upper face cuts it when it enters, then it cuts it again when it goes out. However, it goes out through the paths going out of the surrounding prism. Hence, there are four cuts. We can see an example of this in figure \ref{fig: Inside a tangle prism}.

    \begin{figure}[H]
        \centering
        \includegraphics[width=0.3\linewidth]{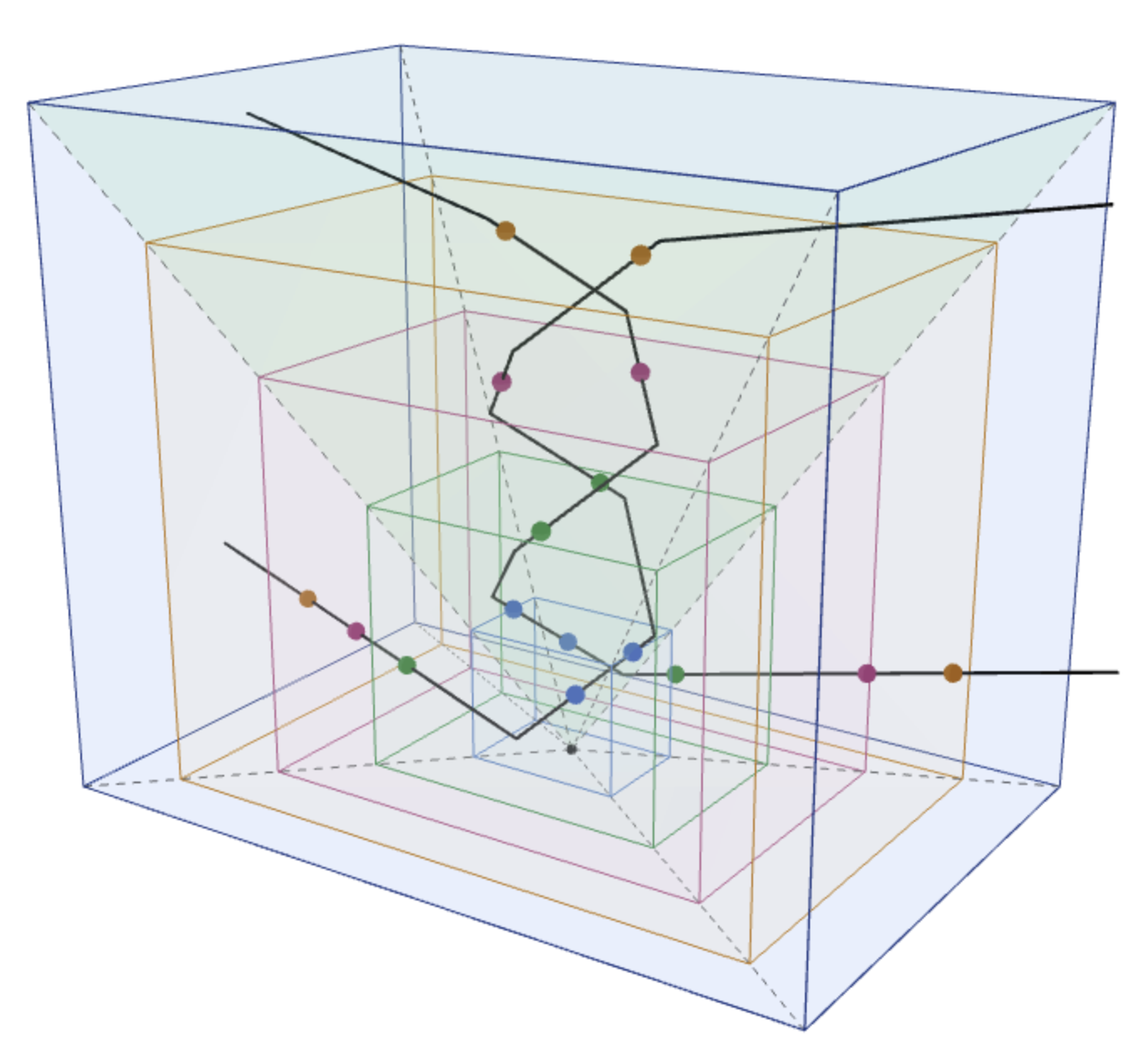}
        \caption{Inside a tangle prism. Intersections of the same level sert are marked with the corresponding color.}
        \label{fig: Inside a tangle prism}
    \end{figure}

    In here, it is important to take good care of the thickening of the vertices. If we do not isolate them, then as we sweep we might intersect many at the same time. If several of them have artificial intersections they will make the count go up. This can be done, for example, with separating polyhedrons.
    A similar analysis works for the other two prisms (the horizontal ones) where the count is bonded by $2$.
     
    This concludes our sketch of proof.
 \end{proof} 
 \begin{remark}
     That Pretzel knots have spherewidth bounded above by $4$ is probably well known. However, I searched online and with A.I. and it did not find explicit sources. It just repeatedly send me to \cite[Figure 3]{LunelSphereWidth}. 
 \end{remark}

Proposition \ref{prop: pretzel bound 4} improves our bound $sw(3_1)\le 6$. Indeed, $3_1 = P(1, 1, 1)$, and thus $sw(3_1)\le 4$.
This motivates the following:

\begin{question}
    Is there an embedding of the Pretzel knots in the finite iterations of the Sierpinski tetrahedron and a corresponding sphere decomposition that recovers the bound $4$? 
\end{question}

This question seems dual to the upper bound of $6$ that we have found. Concretely, the finite connectivity between different tetrahedrons force the spherewidth to do not grow too much. However, it seems that at the same time, this same scarcity of options to change of tetrahedron forces the knot to visit pyramids for \say{superficial} purposes, creating intersections with the level set spheres that otherwise could have been avoided.

\subsection{What other knots are there in the finite iterations?}

In \cite[Theorem II]{Kawauchi} it is proven that the only torus knots that are Pretzel knots are
\begin{align*}
    T(2, n) &= P(1, ..., 1),\\
    T(3, 4) &= P(3, 3, -2) = 8_{19},\\
    T(3, 5) &= P(5, 3, -2) = 10_{124}.
\end{align*}
Notice all these torus knots indeed satisfy the conditions of theorem \ref{thm: Not all knots are in}. There are many torus knots not in the above list, with $\min{p, q}\le 9$, and that are not Pretzel. 

Ultimately, this leads to the following.

\begin{question}
    Can one describe in some sensible way what are the knots that appear in the finite iterations of the Sierpinski tetrahedron? This could entail a (hopefully?) finite list of families of knots with some understood exceptions. 
\end{question}

Simpler versions of this question deal with finite concrete families. For example, is the family $T(3, n)$ for $n$ not a multiple of $3$, in there? The first two members are because they are Pretzel knots. 

Similarly, there are also knots that are neither Pretzel nor torus knots. The smallest prime example is $8_{21}$. One of its knot diagram, taken from The Knot Atlas, is see in figure \ref{fig: Diagram 821}.
\begin{figure}[H]
    \centering
    \includegraphics[scale = 0.4]{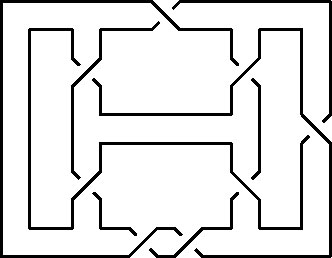}
    \caption{Knot Diagram of $8_{21}$}
    \label{fig: Diagram 821}
\end{figure}

The diagram of figure \ref{fig: Diagram 821} is very reminiscent of the combinatorial representations. At the moment of writing, I do not know if one can find it in some iteration.

In any case, more than torus knot families and Pretzel knots might be needed. What other family could be there lurking in the finite iterations?

\appendix
\section{Geometric Lemmas on Polyhedrons}
\label{appx: Geometric Lemmas on Polyhedrons}

\subsection{Interpolations}

We will review several facts about interpolations of quadrilaterals in $\mathbb{R}^3$. This results, or similar, are probably well known. However, we review them to highlight the aspects that we will require in our constructions.

\begin{definition}
Given two points $A$ and $a$ in $\mathbb{R}^3$, the \textbf{interpolation} between them is the continuous map $\alpha:[0, 1]\longrightarrow\mathbb{R}^3$ given by
\begin{equation*}
    \alpha(t) = (1 - t)A + ta.
\end{equation*}
\end{definition}

The interpolation is a parametrization of the segment $Aa$ from $A$ to $a$. The parameter $t$ represents the proportion $A\alpha(t)/Aa$. Consequently, as long as $A$ and $a$ are different, this map is injective. 

Given two segments $AB$ and $ab$ we can interpolate between them by \say{gluing} the interpolations at proportional corresponding sides. In other words, if $\alpha$ and $\beta$ interpolate $A$ to $a$ and, respectively, $B$ to $b$, then
\begin{equation*}
    (1 - s)\alpha(t) + s\beta(t), 0\le s \le 1,
\end{equation*}
interpolates the segment from $\alpha(t)$ to $\beta(t)$. This leads to

\begin{definition}
Given two segments $AB$ and $ab$ the interpolation between them is the map
\begin{equation*}
    H(s, t) = (1 - s)(1 - t)A + (1 -s)ta + s(1-t)B + stb, 0\le t, s \le 1.
\end{equation*}
\end{definition}
\begin{remark}
   To define the interpolation we must beforehand chose which endpoint of $AB$ goes to $a$ and which one to $b$. We shall always tacitly assume they go in the corresponding order as they are written.
\end{remark}

By construction, at fixed level $0\le t_0 \le 1$, the set given by
\begin{equation*}
    H(s, t_0), 0\le s \le 1
\end{equation*}
is a line segment. At fixed level $s_0$ we get the parametrization
\begin{equation*}
    (1 - t)((1 - s)A + sB) + t((1 - s)a + sb)
\end{equation*}
This is the interpolation $Aa$ to $Bb$. However, the surface spanned by these interpolations itself is not necessarily contained in a plane. It is simply a doubly ruled surface (e.g. a saddle). We are speaking here in generic terms. Depending on the choice of points $A, a, B, b$ there can be self intersections and degenerations.

We are now ready to discuss the situation that we are interested in. We will have two flat quadrilaterals $ABCD$ and $abcd$. In all the cases we will encounter later these polygons will be rectangles and, in a degenerate case, one shall be a point. However, let us keep the discussion somewhat general to handle all cases together.

Let $H(s, t)$ and $G(s, t)$ be the interpolations $AB$ to $DC$ and $ab$ to $dc$. Then we can, once more, interpolate and get
\begin{equation*}
    F(s, t, u) = (1 - u)H(s, t) + uG(s, t), 0\le u \le 1.
\end{equation*}
Then $F$ is interpolating $ABCD$ towards $abcd$.

\begin{proposition}
    \label{prop: interpolation flat faces}
Let $0\le u_0 \le 1$ and define $A_{u_0}, B_{u_0}, C_{u_0}$ and $D_{u_0}$ be the points in the segments $Aa, Bb, Cc$ and $Dd$ dividing it in proportion $u_0$. Then the map $(s, t)\mapsto F(s, t, u_0)$ is the interpolation from $A_{u_0}B_{u_0}$ to $D_{u_0}C_{u_0}$.
\end{proposition}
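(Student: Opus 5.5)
The plan is a direct algebraic expansion using the bilinear formula for interpolations of segments. First I would write out $H$ and $G$ explicitly. With the stated ordering convention, $H$ interpolates $AB$ to $DC$, so
\begin{equation*}
    H(s,t) = (1-s)(1-t)A + (1-s)tD + s(1-t)B + stC.
\end{equation*}
Likewise
\begin{equation*}
    G(s,t) = (1-s)(1-t)a + (1-s)td + s(1-t)b + stc.
\end{equation*}

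Next, I would substitute into $F(s,t,u_0) = (1-u_0)H(s,t) + u_0G(s,t)$ and regroup by the four bilinear coefficients $(1-s)(1-t)$, $(1-s)t$, $s(1-t)$ and $st$. Each coefficient multiplies one pair of corresponding vertices. For example, the coefficient $(1-s)(1-t)$ multiplies $(1-u_0)A + u_0a$, and this combination is exactly $A_{u_0}$ by the definition of the point dividing $Aa$ in proportion $u_0$. The same happens for the pairs $B,b$ and $C,c$ and $D,d$. Carrying this out gives
\begin{equation*}
    F(s,t,u_0) = (1-s)(1-t)A_{u_0} + (1-s)tD_{u_0} + s(1-t)B_{u_0} + stC_{u_0}.
\end{equation*}
This is, term by term, the definition of the interpolation from $A_{u_0}B_{u_0}$ to $D_{u_0}C_{u_0}$, with the endpoints matched in the written order. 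That completes the argument.

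The only point needing care is bookkeeping of the vertex correspondence. We must check that the roles of $s$ and $t$, and the matching $A\to D$, $B\to C$, are the same in $H$, in $G$, and in the target interpolation. Then linearity in $u$ commutes with the bilinear structure in $(s,t)$. No geometric hypothesis is needed: flatness or rectangularity of the quadrilaterals plays no role, so the identity holds even in degenerate cases such as $abcd$ being a single point.
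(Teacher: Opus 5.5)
Your proposal is correct and matches the paper's proof: both expand $H$ and $G$ in the bilinear form, substitute into $F(s,t,u_0)=(1-u_0)H+u_0G$, and regroup so that each coefficient multiplies $A_{u_0}$, $B_{u_0}$, $C_{u_0}$ or $D_{u_0}$. Your remark that no flatness or convexity hypothesis is needed is accurate, since the argument is pure algebra.
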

\begin{proof}
  By definition
  \begin{align*}
      H(s, t) &= (1 - s)(1 - t)A + (1 -s)tD + s(1-t)B + stC,\\
      G(s, t) &= (1 - s)(1 - t)a + (1 -s)td + s(1-t)b + stc.
  \end{align*}
  Substituting and rearranging we get $F(s, t, u_0)$ equals
  \begin{equation*}
      (1 - s)(1 - t)((1-u_0)A + u_0a) + (1 -s)t((1-u_0)D + u_0d) + s(1-t)((1-u_0)B + u_0b) + st((1-u_0)C + u_0c).
  \end{equation*}
  The involved points 
  \begin{align*}
      A_{u_0} :=&(1-u_0)A + u_0a,\\
      D_{u_0} :=&(1-u_0)D + u_0d,\\
      B_{u_0} :=&(1-u_0)B + u_0b,\\
      C_{u_0} :=&(1-u_0)C + u_0c,
  \end{align*}
  are those at which the segments $Aa, Dd, Bb$ and $Cc$ get divided into proportion $u_0$. Furthermore, we recognize $F(s, t, u_0)$ as the interpolation from, $A_{u_0}B_{u_0}$ to $C_{u_0}D_{u_0}$. 
\end{proof}

\begin{definition}
    In the context of the above discussion, let us call the interpolation $(t, s)\mapsto F(s, t, u_0)$ the $u_0$-th level interpolation and its image the $u_0$-th level set of $F$.
\end{definition}

\begin{proposition}
\label{prop: hereditary conditions}
In the context of the above discussion, the following are hold:
\begin{enumerate}
    \item If $ABCD$ and $abcd$ are contained in parallel planes, then for all $0
    \le u \le 1$ the $u-th$ level sets is contained in a plane parallel to the planes of $ABCD$ and $abcd$.
    \item If $ABCD$ and $abcd$ are parallelograms, maybe not parallel among themselves, then for all $0
    \le u \le 1$ the $u-th$ level sets is a parallelogram.
\end{enumerate}
\end{proposition}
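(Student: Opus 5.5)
By Proposition \ref{prop: interpolation flat faces}, the $u$-th level set is the image of the bilinear interpolation from $A_uB_u$ to $D_uC_u$, where $P_u=(1-u)P+up$ for $P\in\{A,B,C,D\}$. So in both parts the plan is to express the vertices $A_u,B_u,C_u,D_u$ as the same convex combination of the corresponding vertices of the two quadrilaterals, and then read off the required geometric property of the bilinear patch.

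For part 1, let $n$ be a common normal vector of the two parallel planes. Then $n\cdot A=n\cdot B=n\cdot C=n\cdot D=\lambda$ and $n\cdot a=n\cdot b=n\cdot c=n\cdot d=\mu$. By linearity,
\begin{equation*}
n\cdot F(s,t,u)=(1-u)\,n\cdot H(s,t)+u\,n\cdot G(s,t).
\end{equation*}
Since $H(s,t)$ is an affine combination of $A,B,C,D$ (the bilinear weights sum to $1$), we get $n\cdot H(s,t)=\lambda$. In the same way $n\cdot G(s,t)=\mu$. Hence the whole $u$-th level set lies in the plane $\{n\cdot x=(1-u)\lambda+u\mu\}$, which is parallel to both given planes. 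This step is routine.

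For part 2, I will use the characterization that a quadrilateral $PQRS$ (vertices in cyclic order) is a parallelogram if and only if $P+R=Q+S$. For $ABCD$ and $abcd$ this gives $A+C=B+D$ and $a+c=b+d$. Taking the convex combination with weights $1-u$ and $u$ yields $A_u+C_u=B_u+D_u$, so $A_uB_uC_uD_u$ is a parallelogram. Next I show that the bilinear interpolation of a parallelogram is the flat affine parametrization. Write $C_u=B_u+D_u-A_u$ and substitute into
\begin{equation*}
(1-s)(1-t)A_u+(1-s)tD_u+s(1-t)B_u+stC_u .
\end{equation*}
The $st$ terms cancel, leaving
\begin{equation*}
A_u+s(B_u-A_u)+t(D_u-A_u).
\end{equation*}
For $s,t\in[0,1]$ this image is exactly the filled parallelogram. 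Equivalently, the boundary of the level set consists of the segments joining consecutive vertices.

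The main obstacle is degenerate cases rather than the algebra. When $abcd$ is a point, $a=b=c=d$ and the relation $a+c=b+d$ still holds trivially, so the argument goes through; for $u=1$ the level set is a degenerate (point) parallelogram. When $u<1$, I must check that $A_uB_uC_uD_u$ does not collapse. In the $PP$ setting the faces are rectangles with parallel corresponding sides, so $B_u-A_u$ is a nonnegative combination of parallel vectors pointing the same way, and it is therefore nonzero. The same holds for $D_u-A_u$. Thus the level set is a genuine parallelogram, and even a rectangle, which is what the later construction needs. I will note this last refinement as a remark, since the statement itself only asks for a parallelogram.
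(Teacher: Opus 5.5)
Your proposal is correct and follows the paper's proof. Part 1 uses a common normal and linearity, and part 2 uses the characterization $P+R=Q+S$ together with $A_u+C_u=(1-u)(A+C)+u(a+c)=B_u+D_u$. Your extra steps also make explicit what the paper only asserts by appealing to the ruled surface: you dot $n$ against the whole bilinear patch using that its weights sum to $1$, and you cancel the $st$-term to obtain $A_u+s(B_u-A_u)+t(D_u-A_u)$. Your nondegeneracy remark anticipates the later Proposition \ref{prop: parallelograms injectivity}.
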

\begin{proof}
    We do them separately. 
    
\underline{Proof of 1:} There exists a vector $v$ perpendicular to $B-A, C-A, D-A$ and $b - a$, $c-a$ and $d-a$. This is because their corresponding planes are parallel. 

We have seen before that
\begin{equation*}
    A_{u} = (1-u)A + ua, B_u =(1-u)B + ub
\end{equation*}
and thus
\begin{equation*}
    B_u - A_u = (1 - u)(B - A) + u(b - a).
\end{equation*}
Hence, by linearity, $B_u - A_u$ is perpendicular to $v$. In the same, fashion $C_u - A_u$ and $D_u - A_u$ are perpendicular to $v$. Hence, we find that the four points are included in a plane through $A_u$ perpendicular to $v$. The result follows as we have seen the interpolation $A_uB_u$ to $D_uC_u$ is a ruled surface and the sides are in this plane.

\underline{Proof of 2:} 
We recall the reader that $XYWZ$ is a parallelogram if and only if
\begin{equation*}
    X + W = Y + Z.
\end{equation*}
By assumption then
\begin{equation*}
    A + C = B + D, a + c = b + d.
\end{equation*}
The result follows from the above equations by adding their appropriate linear combinations. Indeed,
\begin{equation*}
   A_u + C_u = (1 - u)(A + C) + u(a + c) = (1 - u)(B + D) + u(b + d) = B_u + D_u.
\end{equation*}
\end{proof}

\begin{definition}
    The image of $[0, 1]^3$ under the map $F$ will be called the prismoid induced by $F$. If no possibility of confusion can arise we will also call it the prismoid induced by the quadrilaterals $ABCD$ and $abcd$. 
\end{definition}
\begin{remark}
    We are slightly abusing of notation. A prismoid usually refers to a particular type of solid of which the above examples sometimes might not belong.
\end{remark}

Now, we must bring our attention to an important matter. When we have two segments $AB$ and $ab$ it is not true in general that the interpolation $H$ between them is injective or that the image of $H$ coincides with the interior of $ABba$ (if this is non intersecting). 

\begin{proposition}
\label{prop: convexplanarinjectivity}
    Let $AB$ and $ab$ be two coplanar segments such that $ABba$ is a convex quadrilateral. Then the interpolation $H$ between them is injective.
\end{proposition}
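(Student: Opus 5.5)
We need to prove that the interpolation $H(s,t) = (1-s)(1-t)A + (1-s)ta + s(1-t)B + stb$ is injective when $AB$ and $ab$ are coplanar and $ABba$ is a convex quadrilateral (vertices in that cyclic order, non-degenerate). The plan is to work inside the common plane and reduce injectivity to two facts: the rulings $t \mapsto H(s_0,t)$ are injective segments, and distinct rulings never meet.

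\textbf{Step 1: the rulings.} For fixed $s_0$, the map $t\mapsto H(s_0,t)$ is the interpolation from $P(s_0) := (1-s_0)A + s_0B$ to $p(s_0) := (1-s_0)a + s_0 b$. Since $AB$ and $ab$ are opposite sides of a convex quadrilateral, they are disjoint, so $P(s_0) \neq p(s_0)$. Hence each ruling is an injective parametrization of the segment $P(s_0)p(s_0)$, as noted right after the definition of interpolation. It therefore suffices to show that for $s_1 \neq s_2$ the segments $P(s_1)p(s_1)$ and $P(s_2)p(s_2)$ are disjoint.

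\textbf{Step 2: distinct rulings are disjoint.} This is the main obstacle, and I would handle it with an orientation argument. Suppose $s_1<s_2$. The points $P(s_1), P(s_2)$ lie in order on side $AB$, and $p(s_1), p(s_2)$ lie in the same order on side $ab$. By convexity of $ABba$, the boundary cyclic order is $A, P(s_1), P(s_2), B, b, p(s_2), p(s_1), a$. Thus $P(s_1),P(s_2),p(s_2),p(s_1)$ appear in this cyclic order on the boundary of a convex region. The four points are distinct, since $AB\cap ab=\emptyset$ and $P,p$ are injective because $A\neq B$ and $a\neq b$. So they form a convex (possibly degenerate-to-collinear only if three points lie on one side) quadrilateral whose diagonals are $P(s_1)p(s_2)$ and $P(s_2)p(s_1)$. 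The segments $P(s_1)p(s_1)$ and $P(s_2)p(s_2)$ are then opposite sides of this quadrilateral.

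Opposite sides of a convex quadrilateral are disjoint. To make this concrete and avoid the degenerate cases, I would check the sign of the cross product. Let $\ell$ be the line through $P(s_1)$ and $p(s_1)$. This chord splits the convex region $ABba$ into two convex pieces: one containing $A, a$ and one containing $B, b$. The points $P(s_2), p(s_2)$ lie on the closed $B$-side and are not on $\ell$. If $P(s_2)$ were on $\ell$, it would lie on $\ell \cap AB = \{P(s_1)\}$, because $\ell$ meets the line $AB$ only once; $\ell \neq AB$ since $p(s_1) \notin AB$. The same argument applies to $p(s_2)$.

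So the whole segment $P(s_2)p(s_2)$ lies strictly on one side of $\ell$, and hence is disjoint from $P(s_1)p(s_1)$.

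\textbf{Step 3: conclude.} If $H(s_1,t_1)=H(s_2,t_2)$, then by Step 2 we have $s_1=s_2$, and by Step 1 we have $t_1=t_2$. The delicate point is justifying that a chord joining points of two opposite sides separates the convex quadrilateral with $B$ and $b$ on the same side. I would prove this by writing the signed area $\det(p(s_1)-P(s_1), X - P(s_1))$. This expression is affine in $X$, so its sign at $B$ and at $b$ can be compared using convexity: all of $A,B,b,a$ have consistent orientation, so consecutive triples have the same sign. Alternatively, one can derive it from the stated cyclic order.
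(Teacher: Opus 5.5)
Your proof is correct and follows the same idea as the paper's: rulings at two different values of $s$ cannot meet because their endpoints appear in the same order along $AB$ and along $ab$, and once $s_1=s_2$, injectivity along a single ruling gives $t_1=t_2$ (the paper only says ``a similar argument''). Where the paper appeals to a figure, you make the non-crossing rigorous using the line through $P(s_1)p(s_1)$, and your signed-area sketch does settle the one delicate point: expanding $\det\bigl(p(s_1)-P(s_1),\,X-P(s_1)\bigr)$ at $X=B$ and at $X=b$ gives, in each case, a combination with coefficients $(1-s_1)^2>0$ and $(1-s_1)s_1\ge 0$ of determinants that are, up to one common sign, orientations of consecutive vertex triples of $ABba$, so both values have the same strict sign.
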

\begin{proof}
    Suppose $H(s_1, t_1) = H(s_2, t_2) = p$. Suppose that $s_1 > s_2$. We have said before that, at fixed level $s_1$, $H(s_1, \cdot)$ parametrizes a segment. This segments joins the corresponding points, at proportion $s_1$, of opposite sides. By convexity, the whole segment is included in $ABba$. In exactly the same way, there is a segment joining the corresponding points at proportion $s_2$ of the same opposite sides. The situation is thus as shown in the figure \ref{fig: impossible situation}. 

    \begin{figure}[H]
        \centering
        \includegraphics[scale = 0.2]{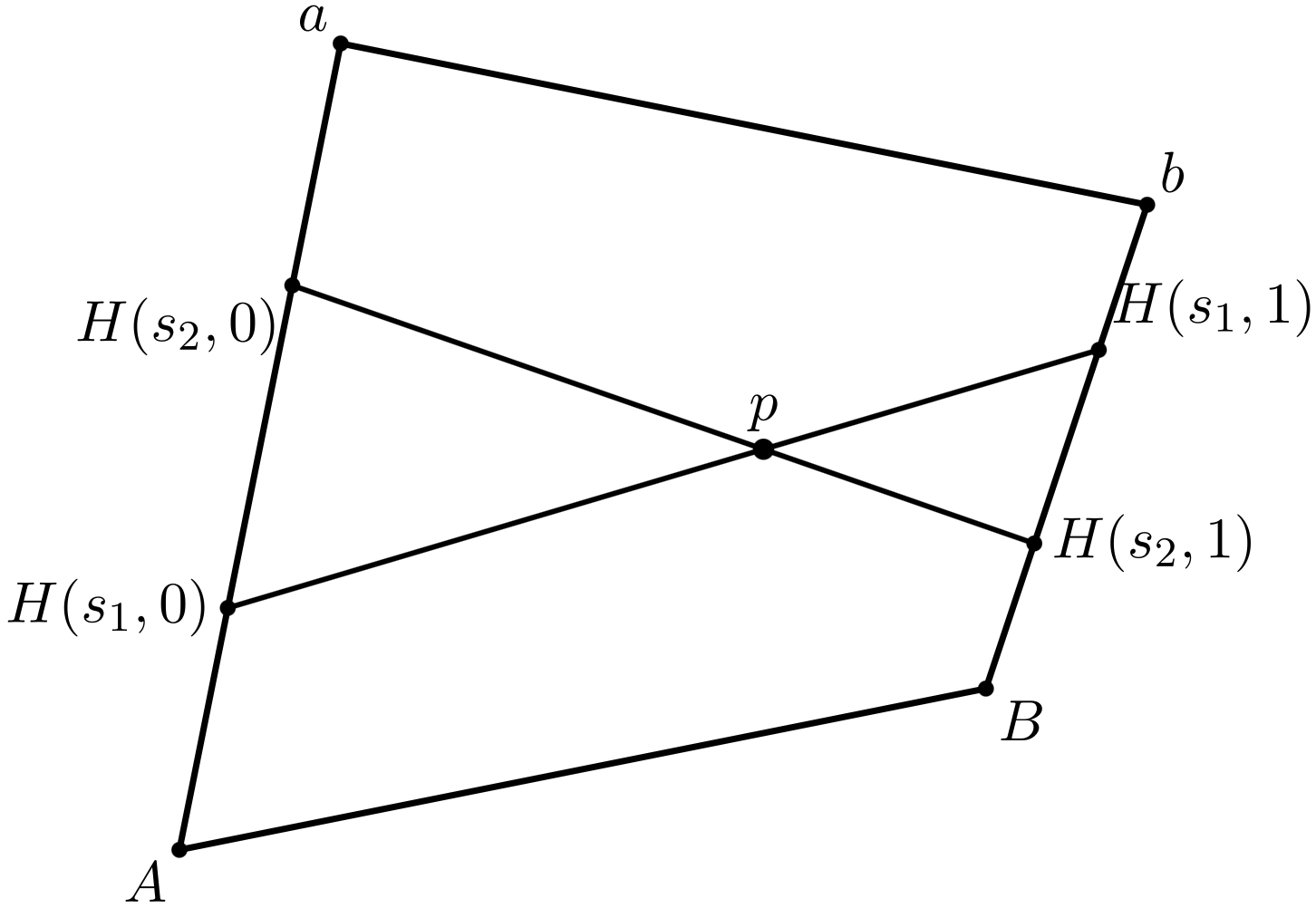}
        \caption{An impossible situation.}
        \label{fig: impossible situation}
    \end{figure}
    
    This configuration is impossible because on $Aa$ and $Bb$, the interpolations are injective. Consequently, as the proportion grows the points should be closer to the endpoints. However, what we see is that in one $s_1$ is closer while in the other it is $s_2$. This is a contradiction, thus $s_1 = s_2$.

    A similar argument shows $t_1 = t_2$.
\end{proof}

\begin{corollary}
   Let $AB$ and $ab$ be two coplanar segments such that $ABba$ is a convex quadrilateral. Then the interpolation $H$ between them is an homeomorphism onto $ABba$.
\end{corollary}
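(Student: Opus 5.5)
The plan is to combine Proposition \ref{prop: convexplanarinjectivity} with the standard fact that a continuous bijection from a compact space onto a Hausdorff space is a homeomorphism. Everything then reduces to showing that the image $H([0,1]^2)$ is exactly the quadrilateral $ABba$, viewed as a closed convex region $Q$ in its plane $\Pi$.

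\textbf{Step 1: the image lies in $Q$.} Each value $H(s,t)$ is a convex combination of $A,a,B,b$, since the coefficients $(1-s)(1-t),(1-s)t,s(1-t),st$ are nonnegative and sum to $1$. By convexity of $Q$, the image is contained in $Q$.

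\textbf{Step 2: $H$ is a homeomorphism onto its image.} $H$ is continuous, $[0,1]^2$ is compact and $\Pi$ is Hausdorff. By Proposition \ref{prop: convexplanarinjectivity}, $H$ is injective. Hence $H$ is a homeomorphism onto $H([0,1]^2)$.

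\textbf{Step 3: the boundary goes to the boundary.} The four sides of the square map affinely onto the four sides of $Q$:
\begin{itemize}
\item $t=0$ onto $AB$, and $t=1$ onto $ab$;
\item $s=0$ onto $Aa$, and $s=1$ onto $Bb$.
\end{itemize}
So $H$ carries $\partial[0,1]^2$ homeomorphically onto the Jordan curve $\partial Q$.

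\textbf{Step 4: surjectivity onto $Q$.} This is the only step that is not pure bookkeeping, and I would handle it with invariance of domain. Identify $\Pi$ with $\mathbb{R}^2$. Since $H$ is injective and continuous on the open square, $H((0,1)^2)$ is open in $\Pi$. It is contained in $Q$, so it lies in $\operatorname{int} Q$, because an open subset of $Q$ cannot meet $\partial Q$.

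Let $I=H([0,1]^2)$; it is compact, hence closed. Its topological boundary in $\Pi$ is contained in $I \setminus H((0,1)^2) = H(\partial[0,1]^2) = \partial Q$.

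Now consider $I\cap \operatorname{int} Q$. It is relatively closed in $\operatorname{int} Q$ because $I$ is closed. It is also relatively open: a point of $I\cap\operatorname{int}Q$ is not in $\partial Q$, so it is not a boundary point of $I$, and is therefore interior to $I$. This set is nonempty, since it contains $H(1/2,1/2)$. Because $\operatorname{int} Q$ is connected (convex), we get $\operatorname{int} Q\subseteq I$.

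Taking closures gives $Q\subseteq I$, and together with Step 1, $I=Q$. Combined with Step 2, $H$ is a homeomorphism onto $ABba$.
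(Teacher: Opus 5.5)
Your proof is correct. Its skeleton matches the paper's: injectivity from Proposition~\ref{prop: convexplanarinjectivity}, the boundary of $[0,1]^2$ going onto the sides of $ABba$, surjectivity, and then the closed map lemma. The difference is in the one step with real content, surjectivity.

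The paper says the image is simply connected ``as $H$ is continuous'' and deduces surjectivity from that. As written, that inference is loose in two ways:
\begin{itemize}
\item continuous images of simply connected spaces need not be simply connected; it is true here only because injectivity plus compactness makes $H$ an embedding;
\item getting from a simply connected image containing $\partial Q$ to all of $Q$ still needs a winding-number argument. A missed interior point $p$ would make $\partial Q$ non-contractible in $Q\setminus\{p\}\supseteq I$.
\end{itemize}

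You instead use invariance of domain to place $H((0,1)^2)$ as an open set inside $\operatorname{int}Q$. You then run a clopen argument on the connected set $\operatorname{int}Q$ and close up. Every step checks out, including $\partial I\subseteq H(\partial[0,1]^2)=\partial Q$ and the use of $Q=\overline{\operatorname{int}Q}$.

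Your route is complete and purely point-set, at the cost of citing invariance of domain, whose planar proof is itself degree-theoretic. The paper's idea, made precise as a degree argument, is shorter. The boundary loop has winding number $\pm1$ about every interior point, and $H$ extends it over the square, so no interior point can be missed. That argument does not use injectivity at all, so injectivity is needed only for the final homeomorphism step.
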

\begin{proof}
    The boundary of $[0, 1]^2$ goes to the sides of $ABba$. This is a simply connected curve and its interior is the quadrilateral (minus the edges). As $H$ is continuous, as it can be written as polynomials, we get its image is simply connected. Hence the map is surjective. We already know it is injective. We conclude $H$ is bijective. The closed map lemma implies $H$ is an homeomorphism onto its image.
\end{proof}

\begin{proposition}
    \label{prop: existence of projection}
    Let $ABCD$ and $abcd$ be two planar quadrilaterals. Let $F:[0, 1]^3\longrightarrow\mathbb{R}^3$ be the interpolation thus constructed from $ABCD$ and $abcd$. Let $\mathcal{S} = \text{Im}(F)$.
    Suppose that $F$ is injective. Then there exists a surjective continuous map
    \begin{equation*}
        \Phi: \mathcal{S}\longrightarrow [0, 1],
    \end{equation*}
    such that for each $0\le u \le 1$, the preimage $\Phi^{-1}(u)$ is the $u$-th level set of $F$.
\end{proposition}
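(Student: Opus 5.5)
The map is forced on us: set $\Phi := \pi_3 \circ F^{-1}$, where $\pi_3:[0,1]^3\to[0,1]$ is the projection $(s,t,u)\mapsto u$. Since $F$ is injective, $F^{-1}:\mathcal{S}\to[0,1]^3$ is well defined as a set map. Everything then reduces to three checks: continuity, surjectivity, and the identification of the fibres.

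First I will show that $F$ is a homeomorphism onto $\mathcal{S}$. The map $F$ is given by polynomials in $(s,t,u)$, so it is continuous. Its domain $[0,1]^3$ is compact and $\mathbb{R}^3$ is Hausdorff, so $F$ is a closed map. A continuous closed injection is a homeomorphism onto its image; this is the closed map lemma, used in the same way as in the corollary to Proposition \ref{prop: convexplanarinjectivity}. Hence $F^{-1}:\mathcal{S}\to[0,1]^3$ is continuous, and $\Phi=\pi_3\circ F^{-1}$ is continuous as a composition of continuous maps.

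Second, surjectivity is immediate. For any $u_0\in[0,1]$, the point $F(0,0,u_0)$ lies in $\mathcal{S}$ and is sent to $u_0$.

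Third, I will compute the fibres. By injectivity of $F$,
\begin{equation*}
\Phi^{-1}(u_0)=F\bigl(\pi_3^{-1}(u_0)\bigr)=\{F(s,t,u_0)\mid 0\le s,t\le 1\}.
\end{equation*}
This set is by definition the $u_0$-th level set of $F$. By Proposition \ref{prop: interpolation flat faces}, it is exactly the interpolation from $A_{u_0}B_{u_0}$ to $D_{u_0}C_{u_0}$. Injectivity is what guarantees that no point of one level set also lies on another, so the fibres are exactly the level sets and not larger.

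I do not expect any real obstacle in this statement. The only delicate point is recognizing that the hypothesis ``$F$ injective'' is what makes $F^{-1}$ well defined, and that compactness is what upgrades $F^{-1}$ from a set map to a continuous one. The genuine difficulty is deferred to the later application to $PP$-configurations in Proposition \ref{prop: PP/LP existance of phi}. There one has to verify injectivity of $F$ on the region between the two prisms, presumably face by face via Proposition \ref{prop: convexplanarinjectivity} and convexity, and then glue the resulting maps on the six faces using Proposition \ref{prop: hereditary conditions}.
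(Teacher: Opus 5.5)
Your proof is correct and follows essentially the same route as the paper: you define $\Phi$ by $\Phi(F(s,t,u))=u$ (well defined by injectivity), apply the closed map lemma on the compact domain to make $F$ a homeomorphism onto $\mathcal{S}$, and deduce continuity of $\Phi=\pi_3\circ F^{-1}$, with the fibres identified as the level sets. Your explicit surjectivity witness $F(0,0,u_0)$ and the fibre computation $\Phi^{-1}(u_0)=F(\pi_3^{-1}(u_0))$ simply spell out steps the paper states more briefly.
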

\begin{proof}
    We emphasize that the content of this proposition is the existence and continuity of this map.
    For a point $p\in\mathcal{S}$ we define
    \begin{equation*}
        \Phi(p) = u,
    \end{equation*}
    if $p = F(s, t, u)$ for some point $(s, t, u)$. This is well-defined by the injectivity of $F$. It is clearly  surjective. By construction, the preimage of $u$ is the $u-$th level map of $F$ as this is defined as 
    \begin{equation*}
        F(s, t, u), 0\le s, t, \le 1.
    \end{equation*}
    Notice that we have the following diagram

   \begin{center}
    \begin{tikzcd}
        {[0, 1]^3} \ar[d, "F"] \ar[dr, "\Phi\circ F"] & \\
        \mathcal{S} \ar[r, "\Phi"] & {[0, 1]}
    \end{tikzcd}
\end{center}
The closed map lemma applied to $F$ implies that it is an homeomorphism onto its image (i.e. onto $\mathcal{S}$). Furthermore, 
    \begin{equation*}
        \Phi(F(s, t, u)) = u,
    \end{equation*}
    by definition. Hence, $\Phi = (\Phi\circ F)\circ F^{-1}$ is continuous. 
\end{proof}

\begin{corollary}
    Under the same conditions as those of proposition \ref{prop: existence of projection}, the $u$-th level sets of $F$ are topological disks.
\end{corollary}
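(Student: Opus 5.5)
The plan is to transport the problem back to the parameter cube $[0,1]^3$ using the homeomorphism $F$. Proposition \ref{prop: existence of projection} gives that $F:[0,1]^3\to\mathcal{S}$ is a continuous injection from a compact space into a Hausdorff space. By the closed map lemma it is therefore a homeomorphism onto $\mathcal{S}$; this step is already used in the proof of that proposition.

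Next, identify the level sets. By construction, the $u$-th level set is
\begin{equation*}
    \Phi^{-1}(u) = F\bigl([0,1]^2\times\{u\}\bigr).
\end{equation*}
Restricting $F$ to the slice $[0,1]^2\times\{u\}$ gives a continuous injection of a compact square into $\mathbb{R}^3$. By the closed map lemma again, this restriction is a homeomorphism onto its image. Since $[0,1]^2$ is a closed topological disk, so is each level set.

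There are two points to check along the way. First, the slice map really is the map $(s,t)\mapsto F(s,t,u)$. By Proposition \ref{prop: interpolation flat faces}, this is the interpolation from $A_uB_u$ to $D_uC_u$, so it is a genuine map from a square. Second, injectivity on the slice is inherited from the global injectivity hypothesis on $F$, so no convexity argument like Proposition \ref{prop: convexplanarinjectivity} is needed.

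I expect no real obstacle here. The only subtlety is that the statement should be read as "closed $2$-disk", that is, homeomorphic to $[0,1]^2$, rather than open disk.

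It is worth adding a remark for later use, as a consequence of the same argument. The boundary circle of the level disk is the image of $\partial[0,1]^2$, namely the quadrilateral $A_uB_uC_uD_u$. This matters when these disks are later assembled into the prisms of Proposition \ref{prop: PP/LP existance of phi}.
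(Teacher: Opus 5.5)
Your proposal is correct and is essentially the paper's argument: $F$ is a homeomorphism onto $\mathcal{S}$ by the closed map lemma, so the level set $F([0,1]^2\times\{u\})$ is homeomorphic to the square $[0,1]^2$. Your extra checks (injectivity inherited on each slice, and the boundary being the quadrilateral $A_uB_uC_uD_u$) are correct but not needed for the statement itself.
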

\begin{proof}
    $F$ is an homeomorphism and the $u$-th level set is the image of $[0, 1]\times [0, 1] \times \{u\}$. 
\end{proof}

\begin{remark}
    There are configurations of $ABCD$ and $abcd$ where the injectivity of $F$ fails spectacularly. In the cases we will work with, the situation is so concrete that the injectivity is easily verified from all the arguments we have discussed above.
\end{remark}

\subsection{A case where injectivity holds.}

Now we describe a configurations we will find in our work and where injectivity holds.

\begin{proposition}
\label{prop: parallelograms injectivity}
    Let $ABCD$ and $abcd$ be parallelograms in parallel planes and with their corresponding vertices named in consistent cyclic order. Suppose they satisfy the following conditions:
    \begin{enumerate}
        \item $B - A$ and $b - a$ are positive scalar multiples of each other.
        \item $D - A$ and $d - a$ are positive scalar mulitples of each other.
    \end{enumerate}
    Then the interpolation $F$ is injective. 
\end{proposition}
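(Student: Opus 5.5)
The plan is to write $F$ in affine coordinates adapted to the parallelograms and reduce injectivity to monotonicity of one-variable functions. Since $ABCD$ is a parallelogram, $C = B + D - A$, so $H(s,t) = A + s(B-A) + t(D-A)$; the interpolation is affine in $(s,t)$. Similarly $G(s,t) = a + s(b-a) + t(d-a)$. Writing $b - a = \lambda(B-A)$ and $d - a = \mu(D-A)$ with $\lambda,\mu>0$ (hypotheses 1 and 2), we get
\begin{equation*}
F(s,t,u) = A + u(a-A) + s\bigl((1-u)+u\lambda\bigr)(B-A) + t\bigl((1-u)+u\mu\bigr)(D-A).
\end{equation*}
This is exactly the parametrization of the $u$-th level set described in proposition \ref{prop: hereditary conditions}: a parallelogram with sides parallel to $B-A$ and $D-A$, scaled by the positive factors $\ell(u) := (1-u)+u\lambda$ and $m(u) := (1-u)+u\mu$.

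Let $n$ be the common normal of the two parallel planes. I would split into two cases according to whether the planes coincide. In the generic case the planes are distinct, so $\langle a - A, n\rangle \neq 0$. Suppose $F(s_1,t_1,u_1) = F(s_2,t_2,u_2)$. Since $B-A$ and $D-A$ are orthogonal to $n$, pairing with $n$ gives $u_1\langle a-A,n\rangle = u_2\langle a-A,n\rangle$, hence $u_1=u_2=:u$. Then we have $(s_1-s_2)\ell(u)(B-A) + (t_1-t_2)m(u)(D-A) = 0$. Because $B-A$ and $D-A$ are linearly independent (the parallelogram is nondegenerate) and $\ell(u),m(u)>0$ as convex combinations of positive numbers, it follows that $s_1=s_2$ and $t_1=t_2$. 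This case is the one actually used in proposition \ref{prop: PP/LP existance of phi}, where opposite faces of $X$ and $Y$ lie in distinct parallel planes.

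The main obstacle is the degenerate case of coplanar parallelograms, which the statement does not explicitly exclude. If $ABCD$ and $abcd$ lie in the same plane, then $F$ maps a $3$-dimensional cube into a plane and cannot be injective, except in degenerate situations. So the statement must be read, as in its application, with the planes distinct; I would state that reading explicitly, or add it as a hypothesis, rather than try to handle the coplanar case. I would also remark that the positivity hypotheses are exactly what keeps $\ell$ and $m$ from vanishing in $[0,1]$. If instead $b-a$ were a negative multiple of $B-A$, then $\ell(u)=0$ for some intermediate $u$, the level set would collapse to a segment, and injectivity would fail. This shows the hypotheses are the right ones.
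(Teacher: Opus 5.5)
Your proof is correct, and its outer structure matches the paper's: first force $u_1=u_2$ because different levels lie in different parallel planes, then prove injectivity on a single level.

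The two arguments differ in the level-wise step. The paper shows that each level set is a nondegenerate parallelogram, hence convex. It then invokes Proposition~\ref{prop: convexplanarinjectivity} (injectivity of the bilinear interpolation across a convex quadrilateral), whose proof rests on a geometric picture. You instead observe that for parallelograms the bilinear interpolation is affine in $(s,t)$, so that $F(s,t,u)=A+u(a-A)+s\ell(u)(B-A)+t\,m(u)(D-A)$. Injectivity on a level then reduces to linear independence of $B-A$ and $D-A$ together with $\ell(u),m(u)>0$. Your route is shorter, fully algebraic, and shows exactly where the positivity hypotheses enter. The paper's lemma is more general, since it covers convex level quadrilaterals that are not parallelograms, but that generality is never used here.

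Your handling of the first step is also more careful than the paper's. The sentence ``if $u_1\neq u_2$, these faces lie in different planes'' silently assumes that the planes of $ABCD$ and $abcd$ are distinct. You are right that the statement needs this: for instance $abcd=ABCD$ makes $F$ independent of $u$. The assumption does hold in the application, since $X$ lies strictly inside $Y$.

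One small correction: in the coplanar case $F$ is never injective, not merely generically. A continuous injection of $[0,1]^3$ into a plane is impossible by invariance of domain, so your qualifier ``except in degenerate situations'' can be dropped.
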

\begin{proof}
    Let $0\le u \le 1$ be a fixed parameter. Define 
     \begin{align*}
      A_u :=&(1-u)A + ua,\\
      B_u :=&(1-u)B + ub,\\
      C_u :=&(1-u)C + uc,\\
      D_d :=&(1-u)D + ud,
  \end{align*}
  the points which cut the segments $Aa, Bb, Cc$ and $Dd$ at proportion $u$, respectively. 
  
  By assumption, we have the existence of two positive constants $\lambda, \mu >0$ such that
\begin{align*}
    B - A &= \lambda (b - a)\\
    D - A &= \mu (d - a).
\end{align*}
A simple computation then shows
\begin{align}
\label{eqn: scalar factor}
    B_u - A_u &= (\lambda + u(1 - \lambda))(b - a) = \left(1 - u +  \dfrac{u}{\lambda}\right)(B - A),\\
    \nonumber D_u - A_u &= (\mu + u(1 - \mu))(d - a) = \left(1 - u +  \dfrac{u}{\mu}\right)(D - A).
\end{align}
Notice that
\begin{align*}
    \lambda + u(1 - \lambda)&= \lambda\left(1 - u +  \dfrac{u}{\lambda}\right) > 0,\\
    \mu + u(1 - \mu) &= \mu\left(1 - u +  \dfrac{u}{\mu}\right) >0.
\end{align*}
This implies that the triangle $A_uB_uD_u$ has positive volume, as its sides are positive multiples of those of $ABD$. It also implies $A_u, B_u$ and $D_u$ has the same orientation as $A, B$ and $D$ (and $a, b$ and $d$). 

We have seen that, because the extreme faces are parallelograms, the intermediate $A_uB_uC_uD_u$ are also parallelograms. Nevertheless, we did not rule out they could be degenerate. However, the fact that $A_uB_uD_u$ has positive volume implies that it is nondegenerate because, being a parallelogram, its reflection $B_uD_uC_u$ must also have positive volume. We conclude that $A_uB_uC_uD_u$ is indeed a nondegenerate parallelogram. Hence, it is convex.

 Now, suppose for a contradiction, that
\begin{equation}
\label{eq: non injectivity equality}
    F(s_1, t_1, u_1) = F(s_2, t_2, u_2).
\end{equation}
We have seen before that the $u$-th level sets are planar and parallel to the quadrilaterals $ABCD$ and $abcd$. Their corresponding vertices cut the segments $Aa, Bb, Cc$ and $Dd$ in proportion $u$. 
Hence, if $u_1 \neq u_2$, these faces lie in different planes and the equality \eqref{eq: non injectivity equality} cannot hold. On the other hand, if $u_1 = u_2 = u_0$, then we have that
\begin{equation*}
    F(s_1, t_1, u_0) = F(s_2, t_2, u_0).
\end{equation*}
However, proposition \ref{prop: convexplanarinjectivity} implies that
\begin{equation*}
    (s, t) \mapsto F(s, t, u_0),
\end{equation*}
is injective. It is here where we needed convexity and nondegeneracy. Thus,
\begin{equation*}
    s_1 = s_2, t_1 = t_2.
\end{equation*}
Hence, the map $F$ is injective.
\end{proof}
\begin{remark}
    The case where the opposite faces are parallel rectangles is covered in this proposition. The two conditions are to guarantee that the labels of the rectangles are consistent.

    Furthermore, for rectangles we can say more. If $B - A$ and $D - A$ are orthogonal, then equations \eqref{eqn: scalar factor} imply that
    \begin{equation*}
        (B_u - A_u)\cdot (D_u - C_u) = 0.
    \end{equation*}
    Hence, all the level sets are also rectangles, as the only parallelograms with $90$ degree angles are rectangles.
\end{remark}

\subsection{Our main construction result}

We are ready to prove our main result in this appendix. Let us first state the result.

\begin{proposition}
\label{prop: appx PP/LP existance of phi}
   Let $X\subset Y$ be in a $PP$-configuration. Define
    \begin{equation*}
        \mathcal{S}_{XY} := \overline{Y - X},
    \end{equation*}
    that is, the closed set wrapped between the two polyhedrons with the surfaces involved.
    There exists a continuous map $\Phi: \mathcal{S}_{XY} \longrightarrow [0, 1]$ such that
    \begin{enumerate}
        \item For each $0\le u \le 1$, $\Phi^{-1}(u)$ is a rectangular prisms with faces parallel to those of $Y$. In particular, it is a polyhedral sphere $\mathbb{S}^2$.
        \item $\Phi^{-1}(0) = X$ and $\Phi^{-1}(1) = Y$.
    \end{enumerate}
    This results holds if $X$ is a point. The only change is the inequality in property $1$ should be $0 < u \le 1$.
\end{proposition}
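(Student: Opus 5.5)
The plan is to realize $\mathcal{S}_{XY}$ as a union of six prismoids, one per face, each parametrized by the interpolation map $F$ of the appendix, and then glue the six level-set maps. Label the vertices of $X$ and $Y$ consistently, so that corresponding vertices lie in the same octant relative to the coordinate directions: $a_{\pm\pm\pm}$ for $X$ and $A_{\pm\pm\pm}$ for $Y$. For each of the six face pairs (a face of $X$ and the parallel face of $Y$ on the same side) let $F_i:[0,1]^3\to\mathbb{R}^3$ be the interpolation from the face of $X$ to the face of $Y$. The faces are parallel rectangles whose edge vectors are positive multiples of each other, because both prisms have faces parallel to the same three coordinate planes and the labels are consistent. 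Therefore Proposition~\ref{prop: parallelograms injectivity} applies, so each $F_i$ is injective, and by the subsequent remark its level sets are rectangles. Proposition~\ref{prop: existence of projection} then gives continuous maps $\Phi_i:\mathcal{P}_i:=\mathrm{Im}(F_i)\to[0,1]$ whose fibers are the $u$-level rectangles.

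Next we identify the global level sets. Let $V_u=(1-u)v+uV$ for each corresponding vertex pair $v,V$. Since the same $u$ is used on every edge $vV$, the eight points $V_u$ are the vertices of the box whose $j$-th coordinate interval is $[(1-u)x_j^-+uy_j^-,\,(1-u)x_j^++uy_j^+]$. This is a genuine rectangular prism parallel to $Y$ and nested between $X$ and $Y$. By Proposition~\ref{prop: interpolation flat faces}, the $u$-level of $F_i$ is exactly the corresponding face of this box. So the union over $i$ of the $u$-levels is the boundary $\partial B_u$ of the box $B_u$, with $B_0=X$ and $B_1=Y$, which gives property~2.

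Finally we glue. The main obstacle is showing that $\mathcal{S}_{XY}=\bigcup_i\mathcal{P}_i$ and that the $\Phi_i$ agree on overlaps. We would argue directly via the boxes. The coordinate intervals of $B_u$ are strictly monotone in $u$, since $X$ lies strictly inside $Y$, so the surfaces $\partial B_u$ are pairwise disjoint and sweep $\mathcal{S}_{XY}$. Hence there is a well-defined $\Phi(p)=u$ iff $p\in\partial B_u$, and it restricts to each $\Phi_i$. This settles consistency on overlaps (the side prismoids $\mathcal{P}_i\cap\mathcal{P}_j$ meet along the interpolated edge quadrilaterals, where both give the same $u$) and covering at once. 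Continuity then follows from the gluing lemma applied to the finitely many closed sets $\mathcal{P}_i$. Alternatively, one can write $\Phi$ explicitly as a maximum over the six coordinate conditions $p_j=(1-u)x_j^\pm+uy_j^\pm$, solved for $u$, which is piecewise linear and hence visibly continuous.

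For $X$ a single point $x$, the same formula applies with all $v=x$. Now $B_0=\{x\}$ and $\partial B_u$ is a prism only for $u>0$, and the faces of $X$ degenerate to a point, so Proposition~\ref{prop: parallelograms injectivity} fails at $u=0$. We would therefore either apply it on $[\varepsilon,1]$ and use the explicit max formula near $x$, or simply use the explicit formula throughout. This gives the stated variant with $0<u\le1$.
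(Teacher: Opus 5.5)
Your proposal is correct. It begins exactly as the paper does: six prismoids, injectivity from Proposition~\ref{prop: parallelograms injectivity}, and local level maps from Proposition~\ref{prop: existence of projection}. Where it differs is in how you glue the local maps and identify the global level sets.

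The paper argues pairwise. It shows that adjacent prismoids meet exactly in their shared quadrilateral, using a half-space separation argument (Proposition~\ref{prop: intersection case 1}). It then checks that the two local maps agree there, because the $u$-parametrization of a shared face depends only on the two interpolated edges (Proposition~\ref{prop: the maps glue}). Finally it certifies each level set as a sphere by assembling six rectangles and counting $8-12+6=2$. That the six prismoids actually fill $\mathcal{S}_{XY}$ is simply read off a figure.

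You argue globally. Since one common $u$ is used on all eight vertex segments, the six level rectangles are precisely the faces of the explicit box $B_u$. Strict nesting of the $B_u$ then gives three things at once: global well-definedness of $\Phi$, agreement on every overlap, and the covering $\mathcal{S}_{XY}=\bigcup_i\mathcal{P}_i$. The sweeping claim deserves one line, e.g.\ take $u^*=\inf\{u : p\in B_u\}$, or read it off your explicit formula.

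Your route buys three things.
\begin{enumerate}
\item It proves the covering step that the paper leaves to a picture.
\item It replaces the Euler-characteristic count, which needs the closed-surface structure to be established separately, with the evident fact that $\partial B_u$ bounds a nondegenerate box.
\item Through $\Phi(p)=\max_j\max\bigl(\frac{x_j^{-}-p_j}{x_j^{-}-y_j^{-}},\frac{p_j-x_j^{+}}{y_j^{+}-x_j^{+}}\bigr)$, a maximum of affine functions, it settles the degenerate case $X=\{x\}$. The paper leaves that case to the reader, and there, as you correctly note, the injectivity proposition fails at $u=0$.
\end{enumerate}
This formula in fact makes the prismoid machinery optional. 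The paper's pairwise argument is more local and does not rely on the level sets assembling into a single explicit family of boxes; that is what it gets in exchange for the extra work.
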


We will do the nondegenarte case, that is, when $X$ is not a point. We leave the adaptation of the results to the degenerate case to the reader. 

We will have some preparation before we show the proof. Let us suppose we have two rectangular prisms $P_1$ and $P_2$ in a $PP$-configuration. We can divide $\mathcal{S}_{P_1, P_2}$ in different regions, each of which is covered by one of our previous constructions. We see this division in figure \ref{fig: All lofts of S}. We realize there are six prismoids in which we divide $\mathcal{S}_{P_1, P_2}$.
\begin{figure}[H]
    \centering
        \includegraphics[scale =0.4]{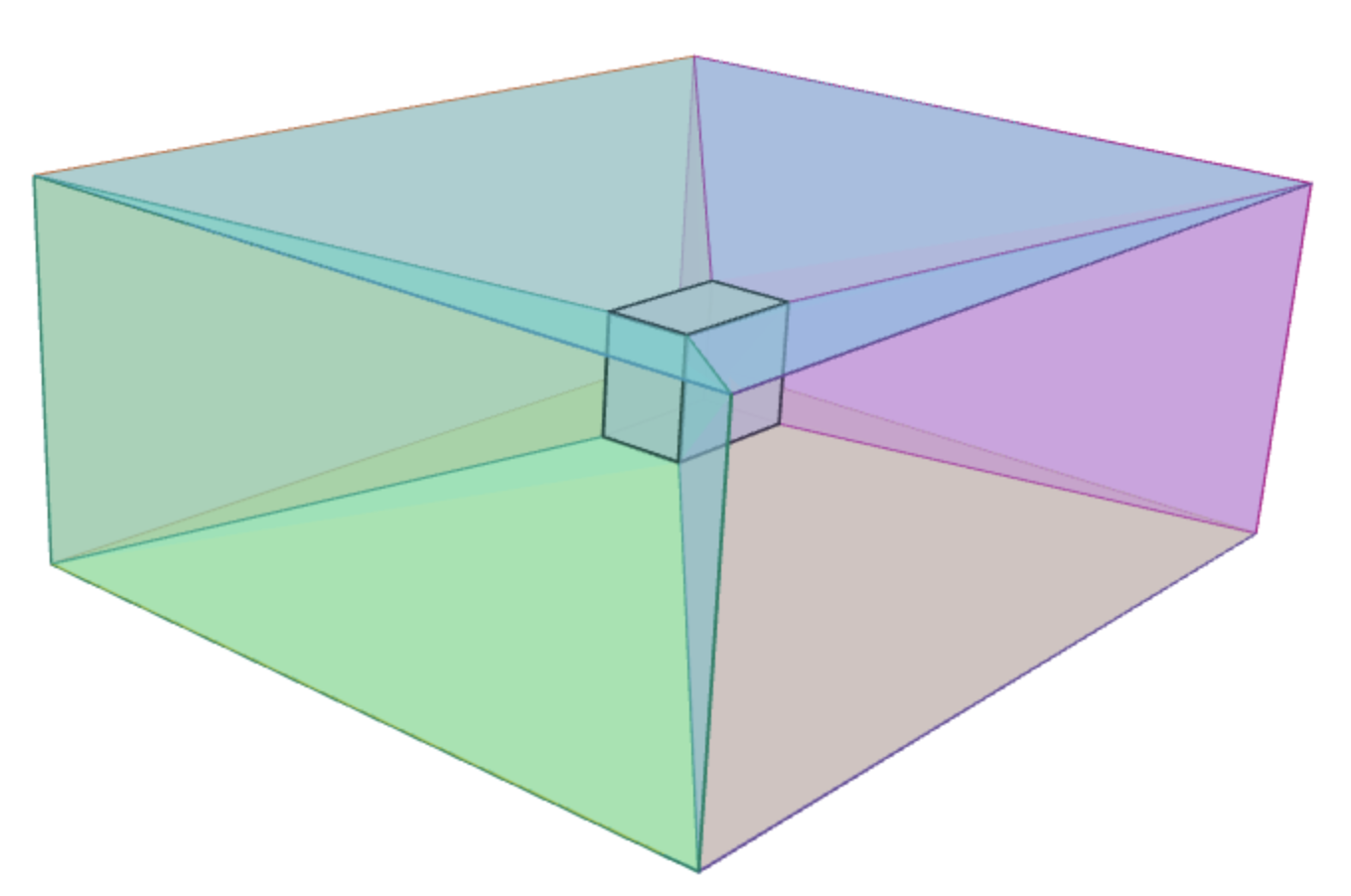}
    \caption{The different prismoids in which $\mathcal{S}_{P_1, P_2}$ is divided.}
    \label{fig: All lofts of S}
\end{figure}
For each of these prismoids $\mathcal{R}$, propositions \ref{prop: parallelograms injectivity} implies that the corresponding $F$ is injective. Consequently, proposition \ref{prop: existence of projection} implies the existence of a continuous surjective map
\begin{equation*}
    \Phi_{\mathcal{R}}: \mathcal{R}\longrightarrow [0, 1]
\end{equation*}
with $\Phi^{-1}(u)$ equal to its $u$-th level set. Our goal is to use the gluing lemma to build out of them a map
\begin{equation*}
    \Phi_{P_1, P_2}: \mathcal{S}_{P_1, P_2}\longrightarrow [0, 1].
\end{equation*}
We will then verify that this map satisfies the requirements we want. 

Our first order of business is to prove the hypothesis of the gluing lemma. That is, we must understand the intersections of the prismoids and prove that on them their corresponding maps coincide.

\begin{proposition}
    \label{prop: intersection case 1}
Suppose $ABCD, abcd, BCEH$ and $bceh$ are four planar convex quadrilaterals satisfying the following assumptions:
\begin{enumerate}
    \item $BCcb$ is a planar convex quadrilateral contained in the plane $L$.
    \item The points $A, D, a, d$ lie on different \textit{open} half space of $L$ than the points $E, H, e, h$. 
\end{enumerate}
Let $F$ be the interpolation from $ABCD$ to $abcd$ and $G$ the one from $BCEH$ to $bceh$. Then, the prismoids associated to $F$ and $G$ intersect exactly in their common face $BCcb$.
\end{proposition}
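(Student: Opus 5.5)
The plan is a half-space separation argument. Write $\mathcal{R}_F$ and $\mathcal{R}_G$ for the prismoids of $F$ and $G$, and let $L^-$ and $L^+$ be the closed half spaces of $L$ with $A, D, a, d \in L^-$ and $E, H, e, h \in L^+$. The key observation is that every point of $\mathcal{R}_F$ is a convex combination of its eight defining vertices. Indeed, expanding
\begin{equation*}
F(s,t,u)=(1-u)H(s,t)+uG(s,t)
\end{equation*}
(with $H$ and $G$ the two face interpolations) gives a combination of $A,B,C,D,a,b,c,d$ whose coefficients are products of factors $s,1-s,t,1-t,u,1-u$. These coefficients are nonnegative and sum to $1$.

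\textbf{Step 1.} Deduce $\mathcal{R}_F \subset L^-$ and $\mathcal{R}_G \subset L^+$. Since $B, C, b, c\in L$ and all eight vertices lie in the convex set $L^-$, the combination above does too; the same holds for $\mathcal{R}_G$. Hence
\begin{equation*}
\mathcal{R}_F\cap\mathcal{R}_G\subset L.
\end{equation*}

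\textbf{Step 2.} Compute $\mathcal{R}_F\cap L$. Pick an affine function $\ell$ vanishing on $L$ and negative on the open side containing $A, D, a, d$. Then $\ell(F(s,t,u))$ is the sum of the coefficients attached to $A, D, a, d$, each multiplied by a strictly negative value. So $\ell(F(s,t,u))=0$ forces all those coefficients to vanish. The combined weight on $A,D,a,d$ is the weight on the side $AD$ in each face interpolation, so the whole parametrization has to be re-indexed to match this. Checking the indexing convention of the interpolation, this weight equals $(1-s)$ if $BC$ corresponds to $s=1$, so it vanishes exactly on the face $s=1$. On that face $F(1,t,u)$ interpolates the segments $BC$ and $bc$, i.e. it is the interpolation of $BC$ to $bc$. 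Its image is the quadrilateral $BCcb$ by the corollary of Proposition~\ref{prop: convexplanarinjectivity}, using hypothesis 1. Therefore $\mathcal{R}_F\cap L = BCcb$, and symmetrically $\mathcal{R}_G\cap L = BCcb$, computed with the interpolation of $G$ along the same edge.

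\textbf{Step 3.} Combining the two steps gives $\mathcal{R}_F\cap\mathcal{R}_G = BCcb$, which is the common face.

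\textbf{Main obstacle.} The difficult point is the bookkeeping of labels. One must make sure that in both interpolations the edge $BC$ (respectively $bc$) is a full side of the parameter square, so that the vanishing weight singles out exactly one face. One must also check that $G$ parametrizes $BCcb$ as the same set even if its orientation differs, which is harmless since only images matter. If the vertices were labelled so that $B$ and $C$ are not adjacent in the interpolation's pairing, I would first relabel cyclically, using Proposition~\ref{prop: interpolation flat faces} to see the image is unchanged.
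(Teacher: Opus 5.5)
Your proposal is correct and follows essentially the same route as the paper: both prismoids lie in opposite closed half spaces of $L$, so their intersection lies in $L$, and each prismoid meets $L$ exactly in $BCcb$ by the corollary on convex quadrilaterals (the paper gets the half-space containment from the segments $u\mapsto F(s,t,u)$ with endpoints in the convex end faces, you from the trilinear convex-combination expansion). Your affine-functional computation, showing the total weight on $A,D,a,d$ is $1-s$ and so vanishes only on the face $s=1$, actually proves the step the paper only asserts, namely that the only points of each prismoid on $L$ are those of $BCcb$.
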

\begin{proof}
    Suppose that $F(s_1, t_1, u_1) = G(s_2, t_2, u_2)$. We have seen that the maps
    \begin{align*}
        u &\longmapsto F(s_1, t_1, u), \; 0\le u \le 1,\\
        u &\longmapsto G(s_2, t_2, u), \; 0\le u \le 1,
    \end{align*}
    are parametrization of segments. In other words, they are the interpolation from $F(s_1, t_1, 0)$ to $F(s_1, t_1, 1)$ and, respectively, $G(s_2, t_2, 0)$ to $G(s_2, t_2, 1)$.
    
    The first interpolation $F(s_1, t_1, u)$ has its endpoints on the quadrilaterals $ABCD$ and $abcd$. Because $ABCD$ and $abcd$ are convex quadrilaterals, and halfplane are convex themselves, these points are on the closed halplane that contains $ABCD$ and $abcd$. Consequently, the whole interpolation lies in this closed half plane. Analogously, the whole interpolation $G(s_2, t_2, u)$ lies on \textit{the other} closed halfplane. 
    
    The given situation is only possible if both segments lie on $L$ themselves. In particular, the intersection point lies on $L$. Furthermore, because $A, D, a, d$ are on the open half space (i.e. not on $L$), the only points of their corresponding prismoid on $L$ are the points of $BCcb$. Analogously for the other prismoid. We conclude, the intersection point lands on the face $BCcb$.

    On the other hand, because both prismoids share the vertices $B, C, c, b$, and this quadrilateral is convex, we know by construction the interpolation $BC$ to $bc$ spans the whole quadrilateral. 

    Thus, by double inclusion, the intersection of these prismoids is exactly their common face.
\end{proof}
\begin{remark}
    Notice that the injectivity of the prismoids is not necessary for this proof.
\end{remark}

All the cases we have to consider are covered by proposition \ref{prop: intersection case 1}. The opposite faces are two rectangles with parallel faces. The face their share is formed from an interpolation between two segments that are parallel.

\begin{proposition}
   \label{prop: the maps glue} 
The set of six maps $\Phi_{\mathcal{R}}:\mathcal{R}\longrightarrow [0, 1]$ glue to a map $\Phi:\mathcal{S}_{P_1, P_2}\longrightarrow [0, 1]$.
\end{proposition}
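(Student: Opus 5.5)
We will apply the gluing lemma to the finite closed cover $\mathcal{S}_{P_1,P_2}=\bigcup_{\mathcal{R}}\mathcal{R}$ by the six prismoids of figure \ref{fig: All lofts of S}. Each $\Phi_{\mathcal{R}}$ is already continuous by proposition \ref{prop: existence of projection}, so it suffices to show that whenever two prismoids $\mathcal{R}$ and $\mathcal{R}'$ meet, the maps $\Phi_{\mathcal{R}}$ and $\Phi_{\mathcal{R}'}$ agree on $\mathcal{R}\cap\mathcal{R}'$.

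\textbf{Setup.} Label the vertices of the inner prism $P_1$ by lowercase letters and those of $P_2$ by uppercase letters. Each prismoid is the interpolation $F_{\mathcal{R}}$ from a face of $P_2$ to the corresponding parallel face of $P_1$, with corresponding vertices joined by the segments $Vv$. There are two kinds of pairs.

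\emph{Opposite faces.} Prismoids built on opposite faces of $P_2$ (for instance top and bottom) are disjoint, since the separating plane argument of proposition \ref{prop: intersection case 1} applies with room to spare. There is nothing to check.

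\emph{Adjacent faces.} Prismoids built on two faces of $P_2$ sharing an edge $BC$ share the face $BCcb$. I will take $L$ to be the plane through $B,C,c,b$. This is a plane because $BC\parallel bc$, both being edges of parallel prisms in corresponding positions, so $BCcb$ is a planar trapezoid, and it is convex. Since $P_1$ lies strictly inside $P_2$, the remaining vertices of the two prismoids lie in opposite open half-spaces of $L$. Proposition \ref{prop: intersection case 1} then gives $\mathcal{R}\cap\mathcal{R}'=BCcb$.

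\textbf{Agreement on the common face.} On $BCcb$, the restriction of $F_{\mathcal{R}}$ to the appropriate side $\{s=1\}$ (or $t=1$, depending on labelling) of $[0,1]^2\times[0,1]$ is $(t,u)\mapsto(1-t)B_u+tC_u$ by proposition \ref{prop: interpolation flat faces}. Here $B_u=(1-u)B+ub$ and $C_u=(1-u)C+uc$, and this expression depends only on the edge data $B,C,b,c$. The restriction of $F_{\mathcal{R}'}$ to its side is the same expression, possibly after reversing the parameter $t$, which does not change the $u$-coordinate. Consequently a point $p\in BCcb$ of the form $p=(1-t)B_u+tC_u$ receives the value $u$ from both maps. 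By injectivity (proposition \ref{prop: parallelograms injectivity}) this $u$ is the unique value either map can assign, so $\Phi_{\mathcal{R}}(p)=\Phi_{\mathcal{R}'}(p)$.

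\textbf{Main obstacle.} The step I expect to need the most care is verifying that the parametrizations of the shared face truly coincide. The two prismoids must use the \emph{same} vertex correspondence on the shared edge, namely $B\leftrightarrow b$ and $C\leftrightarrow c$, and the cyclic orders must be consistent. I would settle this by fixing once and for all the labelling, so that each vertex of $P_2$ is matched to the corresponding vertex of $P_1$ (same signs in the three coordinate directions). Every prismoid inherits this global matching, so any two adjacent prismoids induce the identical segment-interpolation on their common face.

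With pairwise agreement on all intersections, the gluing lemma produces the continuous map $\Phi:\mathcal{S}_{P_1,P_2}\to[0,1]$.
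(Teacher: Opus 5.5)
Your proposal is correct and is essentially the paper's argument. You apply the gluing lemma to the six prismoids, use Proposition~\ref{prop: intersection case 1} to see that adjacent prismoids meet exactly in the shared face $BCcb$, and observe that on this face both interpolations restrict to $(1-t)B_u+tC_u$, which depends only on the edge data, so both maps assign the same $u$. You are more explicit than the paper in checking the half-space hypothesis, disposing of opposite-face pairs, and fixing a global vertex correspondence. One small point: your interpolation runs from $P_2$ to $P_1$, so you would replace $u$ by $1-u$ to match the later requirement $\Phi^{-1}(0)=P_1$, which does not affect the gluing itself.
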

\begin{proof}
By propositions \ref{prop: intersection case 1}, we know that the six prismoids that comprise $\mathcal{S}_{P_1, P_2}$ intersect at their corresponding face. Each of this intersections is itself an interpolation from one segment to another. In particular, the definition of the map in these faces depends on the segments interpolated and not on the quadrilaterals to which they belong. 

We have already explained why each of the maps $\Phi_{\mathcal{R}}$ exists. Ultimately, it is because in the situations we are, we know the interpolation $F_{\mathcal{R}}$ is injective. Consequently, by construction, if a point $p$ in such prismoid is in the $u$-th level face, we have
\begin{equation*}
    \Phi_{\mathcal{R}}(p) = u.
\end{equation*}
Suppose two of these prismoids $\mathcal{R}_1$ and $\mathcal{R}_2$ intersect at the face $XYyx$, where $XY$ is one edge and $xy$ is the corresponding opposite edge. Then the intersection of the $u$-th level face, of either prismoid, with the shared face is the segment $X_uY_u$, where $X_u$ and $Y_u$ have their usual meaning (i.e. the points that divide $Xx$ and $Yy$ in proportion $u$). We know that every point on the shared face lies in a unique segment $X_uY_u$ and that the value of $u$ depends only $XYyx$, and not on the prismoid. We conclude, 
\begin{equation*}
    \Phi_{\mathcal{R}_1}(X_uY_u) = u = \Phi_{\mathcal{R}_2}(X_uY_u).
\end{equation*}
That is, $\mathcal{R}_1$ and $\mathcal{R}_2$ coincide on the shared face. This concludes our proof.
\end{proof}

We are finally ready to prove othe main result of this appendix.

\begin{proof}[Proof of proposition \ref{prop: appx PP/LP existance of phi}]
By proposition \ref{prop: the maps glue} we have the existence of a continuous map
\begin{equation*}
    \Phi:\mathcal{S}_{P_1, P_2}\longrightarrow [0, 1].
\end{equation*}
In each one of the prismoids $\mathcal{R}$ it coincides with $\Phi_{\mathcal{R}}$. To conclude, we must prove the properties hold. We do them separately.

\textit{Property 1: For each $0\le u \le 1$, $\Phi^{-1}(u)$ is a rectangular prism with faces parallel to those of $Y$. In particular, it is a polyhedral sphere $\mathbb{S}^2$.}

For each $0\le u \le 1$, the level set $\Phi^{-1}(u)$ consists of the union of the six level set faces $\Phi_{\mathcal{R}}^{-1}(u)$ as $\mathcal{R}$ varies on the ten prismoids. We know that for each case the level sets are rectangles, each of which glues on each of its edges to another one on the common face of the prismoids that contain them. We know by propositions \ref{prop: intersection case 1}, that the pairwise intersections, at the fixed level $t$, consists only on these edges. Consequently, we have a polyhedron with six faces, each of which is a rectangle.  

The only thing that we have to do is to compute its Euler Characteristic. There are six faces. The number of vertices is $8$, as there is one per edge joining corresponding vertices on $P_1$ and $P_2$. Finally, the number of edges are $12$. Indeed, there are four per face but each is counted twice. We have
\begin{equation*}
    \text{Vertices} - \text{Edges} + \text{Faces} = 8 - 12 + 6 = 2.
\end{equation*}
We conclude this is a sphere.

(What we are verifying is that there is no boundary. Since each face is moving at different pace, because the prisms are not homothetic, one might imagine there is a weird case where some faces fail to close.)

\textit{Property 2: $\Phi^{-1}(0) = P_1$ and $\Phi^{-1}(1) = P_2$.}

This follows from the fact that this holds for each interpolation. The beginning face, which is the $0$-th level face is a face of the $P_1$. The $1$-th level face is on $P_2$.
\end{proof}


\bibliographystyle{acm}
\bibliography{Bibliography}

\end{document}